\providecommand{\U}[1]{\protect \rule{.1in}{.1in}}
\numberwithin{equation}{section}
\newtheorem{theorem}{Theorem}[section]
\newtheorem{corollary}[theorem]{Corollary}
\newtheorem{definition}[theorem]{Definition}
\newtheorem{lemma}[theorem]{Lemma}
\newtheorem{notation}[theorem]{Notation}
\newtheorem{proposition}[theorem]{Proposition}
\newtheorem{remark}[theorem]{Remark}
\newcommand{\arXiv}[1]{arXiv: \href{https://arXiv.org/abs/#1}{#1}}
\begin{document}

\title{\textbf{Explicit positive solutions to $G$-heat equations and the application
to $G$-capacities}}
\author{Mingshang Hu\thanks{Zhongtai Securities Institute for Financial Studies,
Shandong University. humingshang@sdu.edu.cn. Research supported by the
National Natural Science Foundation of China (No. 11671231) and the Young
Scholars Program of Shandong University (No. 2016WLJH10).}, \, \ Yifan
Sun\thanks{School of Mathematics, Shandong University.
sunyifan@mail.sdu.edu.cn.}}
\date{\today}
\maketitle

\begin{abstract}
In this paper, we study a class of explicit positive solutions to $G$-heat
equations by solving second order nonlinear ordinary differential equations.
Based on the positive solutions, we give the sharp order of $G$-capacity
$c_{\sigma}\left(  \left[  -\varepsilon,\varepsilon \right]  \right)  $ when
$\varepsilon \rightarrow0$.

\textbf{Keywords:} $G$-heat equation, Barenblatt equation, $G$-capacity,
$G$-normal distribution

\textbf{2010 MSC:} 35K05, 35K10, 35K55, 60H10, 60B10

\end{abstract}

\section{Introduction}

Peng
\cite{Peng(2004),Peng(2005),Peng(2007),Penglln(2007),Peng(2008),Pengclt(2008),Peng(2010)}
firstly proposed the theory of $G$-expectation which is a kind of dynamically
consistent sublinear expectation. In the theory of $G$-expectation, $G$-heat
equation plays an important role in the definitions of $G$-normal distribution
and $G$-Brownian motion. In this paper, we consider the following
1-dimensional $G$-heat equation:%

\begin{equation}
\partial_{t}u-G\left(  \partial_{xx}^{2}u\right)  =0,\, \, \, \left(
t,x\right)  \in \left[  0,+\infty \right)  \times%
\mathbb{R}
, \label{50}%
\end{equation}
with initial condition $u\left(  0,x\right)  =\phi \left(  x\right)  $. Here
$G\left(  a\right)  =\frac{1}{2}\left(  a^{+}-\sigma^{2}a^{-}\right)  $, where
$\sigma \in \left(  0,1\right]  $ is a fixed constant and $a^{+}=\max \left \{
0,a\right \}  $, $a^{-}=\left(  -a\right)  ^{+}$. Equation (\ref{50}) is also
called the Barenblatt equation (see
\cite{Barenblatt(1979),Barenblatt(1969),Kamin(1991)}). Particularly, when
$\sigma=1$, $G$-heat equation (\ref{50}) is the classical heat equation, which
corresponds to the normal distribution and Brownian motion in the linear
expectation theory.

In the sublinear expectation space, Peng developed the law of large numbers
and the central limit theorem (see \cite{Penglln(2007),Pengclt(2008)}). It is
worth pointing out that the limit distribution in central limit theorem is the
$G$-normal distribution, which is defined by $G$-heat equation. In
\cite{Peng(2007)}, Peng gave a formula to calculate the $G$-normal
distribution\ for convex or concave $\phi \in C_{b,lip}\left(  \mathbb{R}%
\right)  $, and the other special cases are dealt by Hu \cite{Hu(2012)} and
Song \cite{Song(2014)}.

The present\ article is devoted to find explicit positive solutions to the
$G$-heat equations with initial condition $u\left(  0,x\right)  =H\left(
x\right)  $ as follows:%
\[
u^{H}\left(  t,x\right)  =\left(  1+t\right)  ^{-\lambda}H\left(  \frac
{x}{\sqrt{1+t}}\right)  ,
\]
where $H\in C^{2}$ and $\lambda \in \left(  0,\frac{1}{2}\right)  $. For this
purpose, we obtain that $u^{H}$ is the solution to the $G$-heat equation if
and only if $H$ is the solution to the following second order nonlinear
ordinary differential equation (ODE for short):%

\begin{equation}
\left(  y^{\prime \prime}\right)  ^{+}-\sigma^{2}\left(  y^{\prime \prime
}\right)  ^{-}+xy^{\prime}+2\lambda y=0\text{.} \label{52}%
\end{equation}
Then, accordingly, $u^{H}$ can be obtained based on the positive solutions to
the above ODE.

We aim to get the positive solutions to ODE (\ref{52}) for each fixed
$\sigma \in \left(  0,1\right)  $. First, we fix a $\lambda \in \left(  0,\frac
{1}{2}\right)  $ and find a constant $\sigma_{\lambda}\in \left(  0,1\right)  $
depending on the given $\lambda$ such that ODE (\ref{52}) has positive
solutions for $\sigma \in \left[  \sigma_{\lambda},1\right)  $. Next we show
that $\sigma_{\lambda}$ is strictly increasing in $\lambda \in \left(
0,\frac{1}{2}\right)  $. Finally, based on this result, for each fixed
$\sigma \in \left(  0,1\right)  $, we obtain a constant $\lambda_{\sigma}%
\in \left(  0,\frac{1}{2}\right)  $ depending on the given $\sigma$ such that
ODE (\ref{52})\ has positive solutions for $\lambda \in \left(  0,\lambda
_{\sigma}\right]  $.

Through $G$-heat equation (\ref{50}), we can define $G$-capacity $c_{\sigma}$
(see Definition \ref{62104}). When $\sigma=1$, it is well-known that
\begin{equation}
\limsup_{\varepsilon \rightarrow0}\frac{c_{\sigma}\left(  \left[
-\varepsilon,\varepsilon \right]  \right)  }{\varepsilon}<+\infty \text{.}
\label{1601}%
\end{equation}
A natural question is: whether (\ref{1601}) holds for $\sigma \in \left(
0,1\right)  $. According to the explicit positive solutions to $G$-heat
equation (\ref{50}), for each fixed $\sigma \in \left(  0,1\right)  $, we obtain
that
\[
\limsup_{\varepsilon \rightarrow0}\frac{c_{\sigma}\left(  \left[
-\varepsilon,\varepsilon \right]  \right)  }{\varepsilon^{2\lambda_{\sigma}}%
}<+\infty
\]
and%
\begin{equation}
\liminf_{\varepsilon \rightarrow0}\frac{c_{\sigma}\left(  \left[
-\varepsilon,\varepsilon \right]  \right)  }{\varepsilon^{2\lambda}}%
=+\infty \text{ \ for }\lambda \in \left(  \lambda_{\sigma},\frac{1}{2}\right)
\text{,} \label{1602}%
\end{equation}
which implies that 2$\lambda_{\sigma}$ is the sharp order of $c_{\sigma
}\left(  \left[  -\varepsilon,\varepsilon \right]  \right)  $ when
$\varepsilon \rightarrow0$. We also prove that $\lambda_{\sigma}>\sigma^{2}/2$,
which improves the result of Example 3.9 in \cite{Hu(2016)}. Moreover, by
(\ref{1602}), we obtain that there is no positive solutions to $G$-heat
equation (\ref{50}) and ODE (\ref{52}) for $\lambda>\lambda_{\sigma}$.

This paper is organized as follows. Section \ref{s2} presents the notions and
properties of $G$-normal distribution and $G$-capacity. In Section \ref{s3},
the main results of the explicit positive solutions to ODEs and $G$-heat
equations are stated. Then we apply the results of Section \ref{s3} to give
the sharp order of $c_{\sigma}\left(  \left[  -\varepsilon,\varepsilon \right]
\right)  $ when $\varepsilon \rightarrow0$ in Section \ref{s4}. Section
\ref{s5} and Appendix provide some technical proofs of some lemmas in the
former sections.

\section{Basic Settings\label{s2}}

We present some basic notions and results of $G$-heat equation, $G$-normal
distribution and $G$-capacity. The readers may refer to
\cite{Denis(2011),Peng(2007),Peng(2010)} for more details.

We first define the $G$-normal distribution on $C_{b,lip}\left(
\mathbb{R}
\right)  $, where $C_{b,lip}\left(
\mathbb{R}
\right)  $ denotes the space of bounded and Lipschitz functions on
$\mathbb{R}$.

\begin{definition}
\label{62105}For $\sigma \in \left(  0,1\right]  $ and $t\geq0$, define
$\mathbb{\hat{E}}_{\sigma}^{t}:C_{b,lip}\left(  \mathbb{R}\right)
\rightarrow \mathbb{R}$ by
\begin{equation}
\mathbb{\hat{E}}_{\sigma}^{t}\left[  \phi \right]  :=u^{\phi}\left(
t,0\right)  , \label{62101}%
\end{equation}
where $u^{\phi}$ is the unique viscosity solution to the $G$-heat equation
(\ref{50}) with initial condition $u\left(  0,x\right)  =\phi \left(  x\right)
$. $\mathbb{\hat{E}}_{\sigma}^{t}\left[  \cdot \right]  $ is called the
$G$-normal distribution with variance uncertainty $\left[  \sigma
^{2}t,t\right]  $, denoted by $N\left(  0,\left[  \sigma^{2}t,t\right]
\right)  $. Particularly, when $t=1$, we write $\mathbb{\hat{E}}_{\sigma
}\left[  \cdot \right]  :=\mathbb{\hat{E}}_{\sigma}^{1}\left[  \cdot \right]  $
for simplicity of notation.
\end{definition}

\begin{remark}
When $\sigma=1$,%
\[
\mathbb{\hat{E}}_{1}^{t}\left[  \phi \right]  =u^{\phi}\left(  t,0\right)
=\frac{1}{\sqrt{2\pi t}}\int_{-\infty}^{+\infty}\phi \left(  x\right)
\exp \left \{  -\frac{x^{2}}{2t}\right \}  dx,
\]
which is the classical normal distribution $N\left(  0,t\right)  $.
\end{remark}

\begin{remark}
\label{2101}For $\sigma \in \left(  0,1\right]  $, $G$-heat equation (\ref{50})
is a uniformly parabolic PDE and $G$ is a convex function; it has a unique
$C^{1,2}$ solution (see \cite{Krylov(1987),Wang(1992)}).
\end{remark}

By the definition and the comparison theorem of the solutions to $G$-heat
equations, one can check that $\mathbb{\hat{E}}_{\sigma}^{t}\left[
\cdot \right]  $ satisfies the following properties: for each $\phi,\psi \in
C_{b,lip}\left(  \mathbb{R}\right)  $,

\begin{description}
\item[(a)] Monotonicity: $\mathbb{\hat{E}}_{\sigma}^{t}\left[  \phi \right]
\geq \mathbb{\hat{E}}_{\sigma}^{t}\left[  \psi \right]  $ if $\phi \geq \psi$;

\item[(b)] Cash translatability: $\mathbb{\hat{E}}_{\sigma}^{t}\left[
\phi+c\right]  =\mathbb{\hat{E}}_{\sigma}^{t}\left[  \phi \right]  +c$ for
$c\in%
\mathbb{R}
$;

\item[(c)] Sub-additivity: $\mathbb{\hat{E}}_{\sigma}^{t}\left[  \phi
+\psi \right]  \leq \mathbb{\hat{E}}_{\sigma}^{t}\left[  \phi \right]
+\mathbb{\hat{E}}_{\sigma}^{t}\left[  \psi \right]  $;

\item[(d)] Positive homogeneity: $\mathbb{\hat{E}}_{\sigma}^{t}\left[
\lambda \phi \right]  =\lambda \mathbb{\hat{E}}_{\sigma}^{t}\left[  \phi \right]
$ for $\lambda \geq0$.
\end{description}

We now give the relation between $\mathbb{\hat{E}}_{\sigma}^{t}\left[
\cdot \right]  $ and $\mathbb{\hat{E}}_{\sigma}\left[  \cdot \right]  $. Let
$u^{\phi}$ be the solution to $G$-heat equation (\ref{50}) with initial
condition $u\left(  0,x\right)  =\phi \left(  x\right)  $. For each fixed
$t\in \left(  0,+\infty \right)  $, define $\tilde{u}\left(  s,x\right)
=u^{\phi}\left(  ts,\sqrt{t}x\right)  $ for $\left(  s,x\right)  \in \left[
0,+\infty \right)  \times \mathbb{R}$. It is easy to verify that $\tilde{u}$ is
the solution to $G$-heat equation (\ref{50}) with initial condition $u\left(
0,x\right)  =\phi \left(  \sqrt{t}x\right)  $. By Definition \ref{62105}, we have%

\begin{equation}
\mathbb{\hat{E}}_{\sigma}^{t}\left[  \phi \left(  \cdot \right)  \right]
=\mathbb{\hat{E}}_{\sigma}\left[  \phi \left(  \sqrt{t}\cdot \right)  \right]  .
\label{62102}%
\end{equation}

Based on $\mathbb{\hat{E}}_{\sigma}^{t}\left[  \cdot \right]  $, we give the
definition of $G$-capacity.

\begin{definition}
\label{62104}For $\sigma \in \left(  0,1\right]  $, $t\geq0$ and $\left[
a,b\right]  \subseteq \mathbb{R}$, we define
\[
c_{\sigma}^{t}\left(  \left[  a,b\right]  \right)  :=\inf \left \{
\mathbb{\hat{E}}_{\sigma}^{t}\left[  \phi \right]  :\phi \geq I_{\left[
a,b\right]  },\phi \in C_{b,lip}\left(  \mathbb{R}\right)  \right \}  .
\]
Particularly, when $t=1$, we write $c_{\sigma}\left(  \left[  a,b\right]
\right)  :=c_{\sigma}^{1}\left(  \left[  a,b\right]  \right)  $ for simplicity
of notation.
\end{definition}

\begin{remark}
By (\ref{62102}), one can verify that%
\begin{equation}
c_{\sigma}^{t}\left(  \left[  a,b\right]  \right)  =c_{\sigma}\left(  \left[
\frac{a}{\sqrt{t}},\frac{b}{\sqrt{t}}\right]  \right)  \text{ \ for }t>0.
\label{62103}%
\end{equation}

\end{remark}

\begin{remark}
When $\sigma=1$,
\[
c_{1}^{t}\left(  \left[  a,b\right]  \right)  =\frac{1}{\sqrt{2\pi t}}\int
_{a}^{b}\exp \left \{  -\frac{x^{2}}{2t}\right \}  dx,
\]
which is the probability of normal distribution $N\left(  0,t\right)  $ on
$\left[  a,b\right]  $.
\end{remark}

\begin{remark}
\label{62108}If $\psi_{1}\leq I_{\left[  a,b\right]  }\leq \psi_{2}$ with
$\psi_{1},\psi_{2}\in C_{b,lip}\left(  \mathbb{R}\right)  $, then
$\mathbb{\hat{E}}_{\sigma}^{t}\left[  \psi_{1}\right]  \leq c_{\sigma}%
^{t}\left(  \left[  a,b\right]  \right)  \leq \mathbb{\hat{E}}_{\sigma}%
^{t}\left[  \psi_{2}\right]  $.
\end{remark}

\section{Explicit Positive Solutions to $G$-heat Equations\label{s3}}

In this section, our purpose is to find positive function $H\in C^{2}$ and
$\lambda \in \left(  0,\frac{1}{2}\right)  $ such that $u^{H}\left(  t,x\right)
=\left(  1+t\right)  ^{-\lambda}H\left(  x/\sqrt{1+t}\right)  $ is the
solution to $G$-heat equation (\ref{50}) with initial condition $u\left(
0,x\right)  =H\left(  x\right)  $ for any fixed $\sigma \in \left(  0,1\right)
$.

The following theorem gives the relation between the solutions to $G$-heat
equation (\ref{50}) and the solutions to ODE (\ref{52}).

\begin{theorem}
\label{2102}Let $\sigma \in \left(  0,1\right)  $ and $\lambda \in \left(
0,\frac{1}{2}\right)  $ be any fixed constants. Then $H$ is the $C^{2}$
solution to ODE (\ref{52}) if and only if%
\begin{equation}
u^{H}\left(  t,x\right)  =\left(  1+t\right)  ^{-\lambda}H\left(  \frac
{x}{\sqrt{1+t}}\right)  \label{54}%
\end{equation}
is the solution to G-heat equation (\ref{50}) with initial condition $u\left(
0,x\right)  =H\left(  x\right)  $.
\end{theorem}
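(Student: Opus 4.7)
The plan is to verify the equivalence by a direct substitution into the PDE, exploiting the positive 1-homogeneity of $G$. Introduce the change of variables $y = x/\sqrt{1+t}$, so that $u^H(t,x) = (1+t)^{-\lambda} H(y)$. From $\partial_t y = -y/(2(1+t))$ and $\partial_x y = (1+t)^{-1/2}$, the chain rule yields
\begin{align*}
\partial_t u^H(t,x) &= (1+t)^{-\lambda-1}\Bigl[-\lambda H(y) - \tfrac{y}{2} H'(y)\Bigr], \\
\partial^2_{xx} u^H(t,x) &= (1+t)^{-\lambda-1} H''(y).
\end{align*}

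Since the scalar factor $(1+t)^{-\lambda-1}$ is strictly positive, the identity $G(ca) = c\,G(a)$ for $c\geq 0$ (which holds for $G(a)=\tfrac{1}{2}(a^+ - \sigma^2 a^-)$, because scaling by a nonnegative constant preserves both the positive and negative parts) gives
\[
G\bigl(\partial^2_{xx} u^H(t,x)\bigr) = (1+t)^{-\lambda-1} G\bigl(H''(y)\bigr).
\]
Substituting into (\ref{50}) and multiplying through by $2(1+t)^{\lambda+1}>0$ reduces the PDE identity at $(t,x)$ to
\[
\bigl(H''(y)\bigr)^+ - \sigma^2 \bigl(H''(y)\bigr)^- + y H'(y) + 2\lambda H(y) = 0,
\]
which is exactly ODE (\ref{52}) evaluated at the point $y$. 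The initial condition $u^H(0,x) = H(x)$ follows at once from the definition (\ref{54}).

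For the ``if'' direction, this computation shows that whenever $H\in C^2$ solves (\ref{52}), then $u^H \in C^{1,2}$ satisfies (\ref{50}) with the required initial data; by Remark \ref{2101} it is \emph{the} solution. For the ``only if'' direction, note that as $(t,x)$ ranges over $[0,+\infty)\times \mathbb{R}$, the point $y = x/\sqrt{1+t}$ sweeps out all of $\mathbb{R}$, so the vanishing of the left-hand side for every $(t,x)$ forces the ODE to hold pointwise on $\mathbb{R}$.

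There is no substantial obstacle here: the proof is a routine chain-rule verification. The only points requiring care are (i) that the scaling factor $(1+t)^{-\lambda-1}$ extracted from inside $G$ is strictly positive, so the positive and negative parts in $G$ are unaffected, and (ii) that $H\in C^2$ implies $u^H \in C^{1,2}$, so Remark \ref{2101} applies and $u^H$ is identified with the unique classical solution.
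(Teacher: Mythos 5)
Your proof is correct and follows essentially the same route as the paper: the identical chain-rule computation, pulling the positive factor $(1+t)^{-\lambda-1}$ out of $G$ by positive homogeneity, and using the surjectivity of $(t,x)\mapsto x/\sqrt{1+t}$ onto $\mathbb{R}$ for the converse. The only detail worth tightening is that in the ``only if'' direction the $C^{2}$ regularity of $H$ should be \emph{deduced} from the $C^{1,2}$ regularity of the solution $u^{H}$ (Remark \ref{2101}) rather than assumed, which is exactly how the paper closes that direction.
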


\begin{proof}
We can easily check that%
\[
\, \partial_{t}u^{H}=\left(  1+t\right)  ^{-\lambda-1}\left[  -\lambda
H\left(  \frac{x}{\sqrt{1+t}}\right)  -\frac{1}{2}\frac{x}{\sqrt{1+t}%
}H^{\prime}\left(  \frac{x}{\sqrt{1+t}}\right)  \right]
\]
and%
\[
\partial_{xx}^{2}u^{H}=\left(  1+t\right)  ^{-\lambda-1}H^{\prime \prime
}\left(  \frac{x}{\sqrt{1+t}}\right)  \text{.}%
\]
Then we have%
\begin{equation}%
\begin{array}
[c]{cl}
& \partial_{t}u^{H}-G\left(  \partial_{xx}^{2}u^{H}\right) \\
= & -\frac{1}{2}\left(  1+t\right)  ^{-\lambda-1} \left[  2G\left(
H^{\prime \prime}\left(  \frac{x}{\sqrt{1+t}}\right)  \right)  +\frac{x}%
{\sqrt{1+t}}H^{\prime}\left(  \frac{x}{\sqrt{1+t}}\right)  +2\lambda H\left(
\frac{x}{\sqrt{1+t}}\right)  \right]  \text{.}%
\end{array}
\label{22040}%
\end{equation}

If $H$ is the $C^{2}$ solution to ODE (\ref{52}), then by (\ref{22040}) we
have $\partial_{t}u^{H}-G\left(  \partial_{xx}^{2}u^{H}\right)  =0$, so
$u^{H}\left(  t,x\right)  $ defined by (\ref{54}) is the solution to $G$-heat
equation (\ref{50}) with initial condition $u\left(
0,x\right)  =H\left(  x\right)  $.

If $u^{H}\left(  t,x\right)  $ in (\ref{54}) is the
solution to $G$-heat equation (\ref{50}) with initial condition $u\left(
0,x\right)  =H\left(  x\right)  $, then, according to (\ref{22040}), we have
\[
2G\left(  H^{\prime \prime}\left(  \frac{x}{\sqrt{1+t}}\right)  \right)
+\frac{x}{\sqrt{1+t}}H^{\prime}\left(  \frac{x}{\sqrt{1+t}}\right)  +2\lambda
H\left(  \frac{x}{\sqrt{1+t}}\right)  =0\text{.}%
\]
Because of the arbitrariness of $x\in%
\mathbb{R}
$ and $t\in \left[  0,+\infty \right)  $, $H$ is the solution to ODE (\ref{52}),
and we have $H\in C^{2}$ according to Remark \ref{2101}.
\end{proof}

For $\sigma=1$ and each fixed $\lambda \in \left(  0,\frac{1}{2}\right)  $, ODE
(\ref{52}) is a linear ODE%
\begin{equation}
y^{\prime \prime}+xy^{\prime}+2\lambda y=0. \label{55}%
\end{equation}
One can check that the general solution to ODE (\ref{55}) is
\begin{equation}
\Psi_{\lambda}\left(  x\right)  =\mu_{1}\varphi_{\lambda}\left(  x\right)
+\mu_{2}\varphi_{\lambda}\left(  -x\right)  , \label{2301}%
\end{equation}
where $\mu_{1}$ and $\mu_{2}$ are arbitrary constants, and%
\begin{equation}
\varphi_{\lambda}\left(  x\right)  :=\int_{0}^{+\infty}y^{-2\lambda}%
\exp \left \{  -\frac{\left(  y-x\right)  ^{2}}{2}\right \}  dy. \label{5333}%
\end{equation}
It is obvious that $\varphi_{\lambda}$ is positive on $%
\mathbb{R}
$, and it is easy to check that%
\begin{equation}
\varphi_{\lambda}^{\prime}\left(  x\right)  =\int_{0}^{+\infty}y^{-2\lambda
}\left(  y-x\right)  \exp \left \{  -\frac{\left(  y-x\right)  ^{2}}{2}\right \}
dy, \label{2401}%
\end{equation}
and%
\begin{equation}
\varphi_{\lambda}^{\prime \prime}\left(  x\right)  =\int_{0}^{+\infty
}y^{-2\lambda}\left[  \left(  y-x\right)  ^{2}-1\right]  \exp \left \{
-\frac{\left(  y-x\right)  ^{2}}{2}\right \}  dy\text{.} \label{2402}%
\end{equation}
Since $\varphi_{\lambda}$ satisfies ODE (\ref{55}), substituting (\ref{5333})
and (\ref{2401}) into ODE (\ref{55}), we also have
\begin{equation}
\varphi_{\lambda}^{\prime \prime}\left(  x\right)  =\int_{0}^{+\infty
}y^{-2\lambda}\left[  -2\lambda-x\left(  y-x\right)  \right]  \exp \left \{
-\frac{\left(  y-x\right)  ^{2}}{2}\right \}  dy\text{.} \label{3}%
\end{equation}

\begin{notation}
\label{3.2}For each fixed $\lambda \in \left(  0,\frac{1}{2}\right)  $, let
$x_{1}^{\lambda}<0$, $x_{2}^{\lambda}>0$ and $z^{\lambda}>0$ be the constants
such that
\[
\varphi_{\lambda}^{\prime \prime}\left(  x_{1}^{\lambda}\right)  =0\text{,
}\varphi_{\lambda}^{\prime \prime}\left(  x_{2}^{\lambda}\right)  =0\text{, and
}\varphi_{\lambda}^{\prime \prime}\left(  z^{\lambda}\right)  +\varphi
_{\lambda}^{\prime \prime}\left(  -z^{\lambda}\right)  =0\text{.}%
\]

\end{notation}

\begin{remark}
\label{2103}The existence and uniqueness of $x_{1}^{\lambda}$, $x_{2}%
^{\lambda}$ and $z^{\lambda}$ will be proved\ in Lemma \ref{2106} and Lemma
\ref{2107}. Moreover, we will show that $x_{2}^{\lambda}>z^{\lambda}%
>-x_{1}^{\lambda}$ in Lemma \ref{2107}.
\end{remark}

\begin{notation}
\label{3.4}Let%
\[
\sigma_{\lambda}:=-\frac{x_{1}^{\lambda}}{z^{\lambda}}\text{,}%
\]
where $x_{1}^{\lambda}$ and $z^{\lambda}$ are defined by Notation \ref{3.2}.
\end{notation}

Now we give the first main result of\ explicit positive solutions to ODE
(\ref{52}).

\begin{theorem}
\label{2104}Let $\lambda \in \left(  0,\frac{1}{2}\right)  $ be any fixed
constant, and let $\varphi_{\lambda}$ be defined by (\ref{5333}). Then for
each $\sigma \in \left[  \sigma_{\lambda},1\right)  $,%
\begin{equation}
H_{\lambda,\sigma}\left(  x\right)  :=\left \{
\begin{array}
[c]{lll}%
\mu_{1}^{\lambda}\varphi_{\lambda}\left(  x\right)  +\mu_{2}^{\lambda}%
\varphi_{\lambda}\left(  -x\right)  , &  & x\leq-\sigma z^{\lambda}\\
\varphi_{\lambda}\left(  \frac{x}{\sigma}\right)  +\varphi_{\lambda}\left(
-\frac{x}{\sigma}\right)  , &  & -\sigma z^{\lambda}<x<\sigma z^{\lambda}\\
\mu_{2}^{\lambda}\varphi_{\lambda}\left(  x\right)  +\mu_{1}^{\lambda}%
\varphi_{\lambda}\left(  -x\right)  , &  & x\geq \sigma z^{\lambda}%
\end{array}
\right.  \label{58}%
\end{equation}
is the positive $C^{2}$ solution to ODE (\ref{52}), where $\sigma_{\lambda}$
and $z^{\lambda}$ are defined by Notation \ref{3.4} and Notation \ref{3.2},
$\mu_{1}^{\lambda}>0$ and $\mu_{2}^{\lambda}\geq0$ are constants satisfying%
\begin{equation}
\left \{
\begin{array}
[c]{rl}%
\mu_{1}^{\lambda}= & \varphi_{\lambda}^{\prime \prime}\left(  \sigma
z^{\lambda}\right)  \cdot \left[  \varphi_{\lambda}\left(  -\sigma z^{\lambda
}\right)  \varphi_{\lambda}^{\prime \prime}\left(  \sigma z^{\lambda}\right)
-\varphi_{\lambda}\left(  \sigma z^{\lambda}\right)  \varphi_{\lambda}%
^{\prime \prime}\left(  -\sigma z^{\lambda}\right)  \right]  ^{-1}\cdot \left[
\varphi_{\lambda}\left(  -z^{\lambda}\right)  +\varphi_{\lambda}\left(
z^{\lambda}\right)  \right]  \text{,}\\
\mu_{2}^{\lambda}= & \varphi_{\lambda}^{\prime \prime}\left(  -\sigma
z^{\lambda}\right)  \cdot \left[  \varphi_{\lambda}\left(  \sigma z^{\lambda
}\right)  \varphi_{\lambda}^{\prime \prime}\left(  -\sigma z^{\lambda}\right)
-\varphi_{\lambda}\left(  -\sigma z^{\lambda}\right)  \varphi_{\lambda
}^{\prime \prime}\left(  \sigma z^{\lambda}\right)  \right]  ^{-1}\cdot \left[
\varphi_{\lambda}\left(  -z^{\lambda}\right)  +\varphi_{\lambda}\left(
z^{\lambda}\right)  \right]  \text{.}%
\end{array}
\right.  \label{59}%
\end{equation}

\end{theorem}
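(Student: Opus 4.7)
The plan is to exhibit $H_{\lambda,\sigma}$ as the result of gluing three explicit solutions of the piecewise-linear ODE (\ref{52}). When $y''\ge 0$, equation (\ref{52}) reduces to the linear ODE (\ref{55}), whose general solution is $\mu_{1}\varphi_{\lambda}(x)+\mu_{2}\varphi_{\lambda}(-x)$. When $y''\le 0$, it becomes $\sigma^{2}y''+xy'+2\lambda y=0$, and the substitution $x\mapsto x/\sigma$ converts it back to (\ref{55}), so $\varphi_{\lambda}(x/\sigma)+\varphi_{\lambda}(-x/\sigma)$ is a natural even positive solution on the middle interval. The piecewise ansatz (\ref{58}) is built from these three pieces, and what remains is to (i) choose $\mu_{1}^{\lambda},\mu_{2}^{\lambda}$ so that $H$ is $C^{2}$ at $\pm\sigma z^{\lambda}$, (ii) check that $H''$ has the correct sign on each piece, and (iii) verify positivity of $H$.

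For the gluing, note that the middle piece $M(x):=\varphi_{\lambda}(x/\sigma)+\varphi_{\lambda}(-x/\sigma)$ is even and, by the defining property of $z^{\lambda}$ in Notation \ref{3.2}, satisfies $M''(\pm\sigma z^{\lambda})=\sigma^{-2}[\varphi_{\lambda}''(z^{\lambda})+\varphi_{\lambda}''(-z^{\lambda})]=0$. The coefficient pattern in (\ref{58}) makes the outer pieces obey $L(x)=R(-x)$, so $H$ is automatically even and it suffices to match at $x_{0}:=\sigma z^{\lambda}$. I would impose the two conditions $R(x_{0})=M(x_{0})$ and $R''(x_{0})=0$; this is a $2\times 2$ linear system in $(\mu_{1},\mu_{2})$ whose solution is exactly (\ref{59}). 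The third matching condition $R'(x_{0})=M'(x_{0})$ then comes for free: at a point where $y''$ vanishes both linear ODEs reduce to $xy'+2\lambda y=0$, so $R'(x_{0})=-2\lambda R(x_{0})/x_{0}=-2\lambda M(x_{0})/x_{0}=M'(x_{0})$.

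Positivity of $\mu_{1}^{\lambda}$, nonnegativity of $\mu_{2}^{\lambda}$, and hence $H>0$, will follow from sign bookkeeping. The hypothesis $\sigma\in[\sigma_{\lambda},1)$ is equivalent to $-x_{1}^{\lambda}\le\sigma z^{\lambda}<z^{\lambda}<x_{2}^{\lambda}$, so by the analysis of $\varphi_{\lambda}''$ in Lemmas \ref{2106}--\ref{2107} one has $\varphi_{\lambda}''(\sigma z^{\lambda})<0$ and $\varphi_{\lambda}''(-\sigma z^{\lambda})\ge 0$, with the latter strict when $\sigma>\sigma_{\lambda}$. Substituting these signs into (\ref{59}) and using $\varphi_{\lambda}>0$ yields $\mu_{1}^{\lambda}>0$ and $\mu_{2}^{\lambda}\ge 0$, which in turn makes the outer pieces positive.

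The main obstacle will be verifying that $H''\le 0$ on the middle piece and $H''\ge 0$ on the two outer pieces, since this is what makes the $(\cdot)^{\pm}$ decomposition in (\ref{52}) evaluate consistently with the linear ODEs chosen on each region. On the middle piece this amounts to $\varphi_{\lambda}''(u)+\varphi_{\lambda}''(-u)\le 0$ for $|u|\le z^{\lambda}$; on each outer piece one must control the sign of $\mu_{2}^{\lambda}\varphi_{\lambda}''(x)+\mu_{1}^{\lambda}\varphi_{\lambda}''(-x)$ (respectively its reflection) beyond the prescribed zero at $\pm\sigma z^{\lambda}$. Both are sign-monotonicity statements about $\varphi_{\lambda}''$ away from its two zeros $x_{1}^{\lambda},x_{2}^{\lambda}$, which I would derive from the integral representations (\ref{2402}) and (\ref{3}) together with the identity $\varphi_{\lambda}''(x)=-x\varphi_{\lambda}'(x)-2\lambda\varphi_{\lambda}(x)$; these are precisely the technical estimates the authors defer to Section \ref{s5} and the Appendix.
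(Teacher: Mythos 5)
Your proposal is correct and follows essentially the same route as the paper's proof: the same three-piece gluing with the middle piece obtained by the $x\mapsto x/\sigma$ rescaling of Lemma \ref{2105}, the same determination of $\mu_{1}^{\lambda},\mu_{2}^{\lambda}$ from the value-matching and $R''(\sigma z^{\lambda})=0$ conditions (the paper states (\ref{59}) and verifies $\mu_{2}^{\lambda}\varphi_{\lambda}''(\sigma z^{\lambda})+\mu_{1}^{\lambda}\varphi_{\lambda}''(-\sigma z^{\lambda})=0$ rather than deriving it, but the content is identical), the same sign bookkeeping via Lemmas \ref{2106}--\ref{2107} and Remark \ref{281} for positivity of the coefficients and the concavity/convexity pattern, and the same trick of recovering $C^{1}$ matching from the ODE at a point where the second derivative vanishes. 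The technical facts you correctly defer are exactly the ones the paper proves in Section \ref{s5}.
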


In order to prove Theorem \ref{2104}, we need the following lemmas.

\begin{lemma}
\label{2105}Let $\lambda \in \left(  0,\frac{1}{2}\right)  $ be any fixed
constant. If $\phi \left(  \cdot \right)  $ is a $C^{2}$ solution to ODE
(\ref{55}), then for each $\sigma \in \left(  0,1\right)  $, $\Phi \left(
x\right)  :=\phi \left(  x/\sigma \right)  $ is a $C^{2}$ solution to the
following ODE:%
\begin{equation}
\sigma^{2}y^{\prime \prime}+xy^{\prime}+2\lambda y=0\text{.} \label{57}%
\end{equation}

\end{lemma}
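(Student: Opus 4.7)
The plan is a direct substitution argument via the chain rule; there is no genuine obstacle here, since ODE (\ref{57}) is precisely the pullback of ODE (\ref{55}) under the scaling $x \mapsto x/\sigma$.

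First I would compute the derivatives of $\Phi(x) = \phi(x/\sigma)$. By the chain rule,
\[
\Phi'(x) = \tfrac{1}{\sigma}\,\phi'(x/\sigma), \qquad \Phi''(x) = \tfrac{1}{\sigma^{2}}\,\phi''(x/\sigma),
\]
and these are continuous on $\mathbb{R}$ since $\phi \in C^2$, so $\Phi \in C^2$.

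Next I would substitute these expressions into the left-hand side of ODE (\ref{57}):
\[
\sigma^{2}\Phi''(x) + x\,\Phi'(x) + 2\lambda \Phi(x)
= \phi''(x/\sigma) + \tfrac{x}{\sigma}\,\phi'(x/\sigma) + 2\lambda\,\phi(x/\sigma).
\]
Setting $z = x/\sigma$, the right-hand side is exactly
\[
\phi''(z) + z\,\phi'(z) + 2\lambda\,\phi(z),
\]
which vanishes by the hypothesis that $\phi$ solves ODE (\ref{55}). Since $x \in \mathbb{R}$ was arbitrary, $\Phi$ satisfies ODE (\ref{57}) everywhere, completing the proof. The entire argument is a one-line change of variables; no further estimates or case analysis (e.g.\ on the sign of $\Phi''$) are required because ODE (\ref{55}) and ODE (\ref{57}) are both linear.
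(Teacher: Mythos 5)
Your computation is correct and is exactly the routine chain-rule verification the paper leaves implicit (its proof is simply ``It is easy to verify the result''). Nothing further is needed.
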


\begin{proof}
It is easy to verify the result.
\end{proof}

\begin{lemma}
\label{2106}Let $\lambda \in \left(  0,\frac{1}{2}\right)  $ be any fixed
constant. Then there is a unique constant $x_{1}^{\lambda}\in \left(
-1,0\right)  $ such that $\varphi_{\lambda}^{\prime \prime}\left(
x_{1}^{\lambda}\right)  =0$, and there is a unique $x_{2}^{\lambda}\in \left(
1,+\infty \right)  $ such that $\varphi_{\lambda}^{\prime \prime}\left(
x_{2}^{\lambda}\right)  =0$. Moreover,%
\[
\left \{
\begin{array}
[c]{l}%
\varphi_{\lambda}^{\prime \prime}\left(  x\right)  <0\text{ \ for }x\in \left(
x_{1}^{\lambda},x_{2}^{\lambda}\right)  \text{,}\\
\varphi_{\lambda}^{\prime \prime}\left(  x\right)  >0\text{ \ for }x\in \left(
-\infty,x_{1}^{\lambda}\right)  \cup \left(  x_{2}^{\lambda},+\infty \right)
\text{.}%
\end{array}
\right.
\]

\end{lemma}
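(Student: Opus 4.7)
The plan is to combine pointwise sign computations from the two integral representations~(\ref{2402}) and~(\ref{3}) of $\varphi_\lambda''$ with an ODE-based argument for uniqueness and classification of zero-crossings. For the easy signs, representation~(\ref{2402}) gives $\varphi_\lambda''(x) > 0$ whenever $x \leq -1$, because then $y - x \geq 1$ on $[0, +\infty)$ and hence $(y-x)^2 - 1 \geq 0$; representation~(\ref{3}) at $x = 0$ collapses to $\varphi_\lambda''(0) = -2\lambda \int_0^{+\infty} y^{-2\lambda} e^{-y^2/2}\,dy < 0$; and for large $x > 0$, a Laplace-type expansion of~(\ref{2402}) after substituting $u = y - x$ and using $(x+u)^{-2\lambda} = x^{-2\lambda}[1 - 2\lambda u/x + \lambda(2\lambda+1) u^2/x^2 + \cdots]$ together with $\int_{-\infty}^{+\infty}(u^2 - 1)e^{-u^2/2}\,du = \int_{-\infty}^{+\infty} u(u^2 - 1)e^{-u^2/2}\,du = 0$ yields $\varphi_\lambda''(x) \sim 2\lambda(2\lambda + 1)\sqrt{2\pi}\, x^{-2\lambda - 2} > 0$ as $x \to +\infty$.

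The main obstacle is to show $\varphi_\lambda''(1) < 0$, which is needed to force $x_2^\lambda > 1$. Substituting $u = y - 1$ in~(\ref{2402}) and writing $(1+u)^{-2\lambda} = 1 + [(1+u)^{-2\lambda} - 1]$, I split
\[
\varphi_\lambda''(1) = \int_{-1}^{+\infty}(u^2 - 1)e^{-u^2/2}\,du + \int_{-1}^{+\infty}\big[(1+u)^{-2\lambda} - 1\big](u^2 - 1)e^{-u^2/2}\,du.
\]
The first integral equals $-\int_{-\infty}^{-1}(u^2 - 1)e^{-u^2/2}\,du < 0$ since the full line integral vanishes. For the correction term, on $(1, +\infty)$ the factor $(1+u)^{-2\lambda} - 1$ is negative while $u^2 - 1 > 0$, so that piece is nonpositive; on $(-1, 1)$ I pair $u$ with $-u$ to obtain
\[
\int_0^1 \big[(1+u)^{-2\lambda} + (1-u)^{-2\lambda} - 2\big](u^2 - 1)e^{-u^2/2}\,du,
\]
and since $h(u) := (1+u)^{-2\lambda}$ is strictly convex (from $h''(u) = 2\lambda(2\lambda + 1)(1+u)^{-2\lambda - 2} > 0$) one has $h(u) + h(-u) \geq 2h(0) = 2$, while $u^2 - 1 \leq 0$ on $[0, 1]$, so this piece is also nonpositive. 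Hence $\varphi_\lambda''(1) < 0$. I note that trying to conclude directly from the ODE identity $\varphi_\lambda''(1) = -\varphi_\lambda'(1) - 2\lambda\varphi_\lambda(1)$ does not obviously work since $\varphi_\lambda'(1)$ may itself be negative.

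For uniqueness and the classification of zero-crossings, I differentiate the ODE~(\ref{55}) to get $\varphi_\lambda'''(x) = -x\varphi_\lambda''(x) - (2\lambda + 1)\varphi_\lambda'(x)$, so at any zero $x^*$ of $\varphi_\lambda''$ I have $\varphi_\lambda'''(x^*) = -(2\lambda + 1)\varphi_\lambda'(x^*)$. I first control $\varphi_\lambda'$: at any critical point $x_0$ of $\varphi_\lambda$ the ODE forces $\varphi_\lambda''(x_0) = -2\lambda \varphi_\lambda(x_0) < 0$, so every critical point is a strict local maximum and there is therefore at most one; combined with $\varphi_\lambda'(x) > 0$ for $x \leq 0$ (visible from~(\ref{2401})) and $\varphi_\lambda'(x) < 0$ as $x \to +\infty$ (similar Laplace expansion), there is a unique $x_0 \in (0, +\infty)$ with $\varphi_\lambda' > 0$ on $(-\infty, x_0)$ and $\varphi_\lambda' < 0$ on $(x_0, +\infty)$. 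Consequently every zero of $\varphi_\lambda''$ in $(-\infty, x_0)$ is a strict descending crossing ($+$ to $-$) and every zero in $(x_0, +\infty)$ a strict ascending crossing; two same-type zeros would force an opposite-type zero in between by the intermediate value theorem, a contradiction. Applying the intermediate value theorem on $[-1, 0]$ and on $[1, M]$ for $M$ large then produces $x_1^\lambda \in (-1, 0) \subset (-\infty, x_0)$ and $x_2^\lambda \in (1, +\infty)$ (with $x_2^\lambda > x_0$, since $\varphi_\lambda''(x_0) < 0$ together with $\varphi_\lambda''(1) < 0$ forces the unique ascending crossing in $(x_0, +\infty)$ to occur after $1$), each unique in its half-line, and the descending/ascending classification yields the claimed sign pattern $\varphi_\lambda'' > 0$ on $(-\infty, x_1^\lambda) \cup (x_2^\lambda, +\infty)$ and $\varphi_\lambda'' < 0$ on $(x_1^\lambda, x_2^\lambda)$.
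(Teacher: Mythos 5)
Your proof is correct, but it takes a genuinely different route from the paper's own argument (Subsection \ref{0801}). The two real points of divergence are the proof that $\varphi_{\lambda}''(1)<0$ and the uniqueness/sign-classification step. For the first, the paper proves the stronger inequality $\varphi_{\lambda}''(1)+\varphi_{\lambda}''(-1)<0$ by splitting this sum into $g(\lambda)+h(\lambda)$ and running a convexity argument on $g$ and a monotonicity argument on $h$; your symmetrization of the correction term $\left[(1+u)^{-2\lambda}-1\right]$ over $(-1,1)$ combined with the convexity of $u\mapsto(1+u)^{-2\lambda}$ yields the cleaner bound $\varphi_{\lambda}''(1)\leq-e^{-1/2}$ directly, which is all this lemma needs (the paper reuses its stronger form later, in Lemma \ref{2503}). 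For the second, the paper works interval by interval: a direct integral comparison on $(-1,x_{1}^{\lambda})$, a bootstrapping contradiction from identity (\ref{8}) on $(x_{2}^{\lambda},+\infty)$, and a separate maximum-point argument on $(0,1)$ via (\ref{9})--(\ref{10}); you instead first establish unimodality of $\varphi_{\lambda}$ (at any critical point the ODE forces $\varphi_{\lambda}''=-2\lambda\varphi_{\lambda}<0$, so there is exactly one, at some $x_{0}>0$) and then read off from $\varphi_{\lambda}'''=-x\varphi_{\lambda}''-(2\lambda+1)\varphi_{\lambda}'$ that every zero of $\varphi_{\lambda}''$ is a strict descending crossing left of $x_{0}$ and a strict ascending crossing right of it; this one observation subsumes the paper's Steps 2 and 3 and gives the global sign pattern at once. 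Your asymptotic $\varphi_{\lambda}''(x)\sim 2\lambda(2\lambda+1)\sqrt{2\pi}\,x^{-2\lambda-2}$ is also sharper than the paper's limit-plus-contradiction treatment of large $x$, though you should add a line controlling the binomial remainder and the contribution near the integrable singularity at $y=0$ (both are $O(x^{-2\lambda-3})$ or exponentially small, so this is routine), and you should note explicitly that zeros of $\varphi_{\lambda}''$ are isolated (since $\varphi_{\lambda}'''\neq0$ there, $x_{0}$ itself not being such a zero), which is what licenses the ``two same-type crossings force an opposite-type crossing between them'' step.
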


\begin{lemma}
\label{2107}Let $\lambda \in \left(  0,\frac{1}{2}\right)  $, $\mu_{1}\in \left(
0,+\infty \right)  $ and $\mu_{2}\in \left(  0,+\infty \right)  $ be any fixed
constants, and let $\Psi_{\lambda}$ be defined by (\ref{2301}). Then there is
a unique $z_{1}^{\lambda}\in \left(  -x_{2}^{\lambda},x_{1}^{\lambda}\right)  $
such that $\Psi_{\lambda}^{\prime \prime}\left(  z_{1}^{\lambda}\right)  =0$,
and there is a unique $z_{2}^{\lambda}\in \left(  -x_{1}^{\lambda}%
,x_{2}^{\lambda}\right)  $ such that $\Psi_{\lambda}^{\prime \prime}\left(
z_{2}^{\lambda}\right)  =0$. Moreover,%
\[
\left \{
\begin{array}
[c]{l}%
\Psi_{\lambda}^{\prime \prime}\left(  x\right)  <0\text{ \ for }x\in \left(
z_{1}^{\lambda},z_{2}^{\lambda}\right)  \text{,}\\
\Psi_{\lambda}^{\prime \prime}\left(  x\right)  >0\text{ \ for }x\in \left(
-\infty,z_{1}^{\lambda}\right)  \cup \left(  z_{2}^{\lambda},+\infty \right)
\text{.}%
\end{array}
\right.
\]

\end{lemma}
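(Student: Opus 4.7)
The plan is to combine the sign information for $\varphi_\lambda''$ given by Lemma \ref{2106} with a short ODE-derivative trick that handles both existence and uniqueness of the zeros of $\Psi_\lambda''$.

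First I would observe that ODE (\ref{55}) is invariant under $x \mapsto -x$, so $\varphi_\lambda(-x)$ also solves (\ref{55}), and hence so does $\Psi_\lambda$. Writing $\Psi_\lambda''(x) = \mu_1 \varphi_\lambda''(x) + \mu_2 \varphi_\lambda''(-x)$, Lemma \ref{2106} pins down the sign of $\Psi_\lambda''$ on three ``easy'' regions. On $(-\infty, -x_2^\lambda]$ one has $x < x_1^\lambda$ and $-x \geq x_2^\lambda$, so $\varphi_\lambda''(x) > 0$ and $\varphi_\lambda''(-x) \geq 0$, giving $\Psi_\lambda'' > 0$; symmetrically $\Psi_\lambda'' > 0$ on $[x_2^\lambda, +\infty)$. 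For $x \in [x_1^\lambda, -x_1^\lambda]$, both $x$ and $-x$ lie in $[x_1^\lambda, x_2^\lambda]$ (using $-x_1^\lambda < 1 < x_2^\lambda$), so $\varphi_\lambda''(x), \varphi_\lambda''(-x) \leq 0$ with at least one strict inequality at each point; hence $\Psi_\lambda'' < 0$ on $[x_1^\lambda, -x_1^\lambda]$. Any zero of $\Psi_\lambda''$ must therefore lie in $(-x_2^\lambda, x_1^\lambda) \cup (-x_1^\lambda, x_2^\lambda)$.

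For existence I would apply the intermediate value theorem. Using $\varphi_\lambda''(x_1^\lambda) = \varphi_\lambda''(x_2^\lambda) = 0$, one has $\Psi_\lambda''(-x_2^\lambda) = \mu_1 \varphi_\lambda''(-x_2^\lambda) > 0$ (since $-x_2^\lambda < x_1^\lambda$) and $\Psi_\lambda''(x_1^\lambda) = \mu_2 \varphi_\lambda''(-x_1^\lambda) < 0$ (since $-x_1^\lambda \in (x_1^\lambda, x_2^\lambda)$), producing a zero $z_1^\lambda \in (-x_2^\lambda, x_1^\lambda)$. The analogous computation at the endpoints of $(-x_1^\lambda, x_2^\lambda)$ produces a zero $z_2^\lambda$ in that interval.

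Uniqueness is the main point and exploits that $\Psi_\lambda$ itself satisfies (\ref{55}). Differentiating the ODE yields $\Psi_\lambda'''(x) = -(1+2\lambda)\Psi_\lambda'(x) - x\Psi_\lambda''(x)$. At a zero $z$ of $\Psi_\lambda''$ (which necessarily satisfies $z \ne 0$, since both candidate intervals avoid the origin), the ODE itself gives $\Psi_\lambda'(z) = -\frac{2\lambda}{z}\Psi_\lambda(z)$, and therefore
\[
\Psi_\lambda'''(z) = \frac{2\lambda(1+2\lambda)}{z}\,\Psi_\lambda(z).
\]
Since $\mu_1, \mu_2 > 0$ and $\varphi_\lambda > 0$ on $\mathbb{R}$, one has $\Psi_\lambda(z) > 0$, so the sign of $\Psi_\lambda'''(z)$ matches that of $z$. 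Thus in $(-x_2^\lambda, x_1^\lambda) \subset (-\infty, 0)$ every zero of $\Psi_\lambda''$ is a strict $(+)\to(-)$ crossing, which rules out a second zero there (the second crossing would force $\Psi_\lambda'''\geq 0$); the symmetric argument in $(-x_1^\lambda, x_2^\lambda) \subset (0, +\infty)$ gives uniqueness of $z_2^\lambda$. Combining this with the sign calculation above yields $\Psi_\lambda'' < 0$ on $(z_1^\lambda, z_2^\lambda)$ and $\Psi_\lambda'' > 0$ outside, as required. The main obstacle is precisely this uniqueness step; the IVT is immediate once Lemma \ref{2106} is in hand, but excluding multiple crossings relies on the linear-ODE derivative identity for $\Psi_\lambda'''$ above.
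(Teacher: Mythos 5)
Your proof is correct, and the uniqueness step is a genuinely cleaner variant of the paper's argument. The paper reduces to $\Psi_{\lambda,k}=k\varphi_{\lambda}(x)+\varphi_{\lambda}(-x)$, defines $z_{2}^{\lambda,k}$ as the \emph{minimal} zero in $\left(-x_{1}^{\lambda},x_{2}^{\lambda}\right)$, and then argues by contradiction: a failure of positivity to the right would produce a point $x'$ with $\Psi'''(x')=0$ and $\Psi''(x')\leq 0$, which is excluded via the identity (\ref{8}) together with a case split on whether $x'$ exceeds $\sqrt{1+2\lambda}$ (the second case requiring an auxiliary minimal zero $\hat z$ of $\Psi'''$); the negative half-line is then handled by the symmetry $\Psi_{\lambda,k}''(x)=k\,\Psi_{\lambda,1/k}''(-x)$. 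You instead evaluate the same family of identities \emph{at a zero of $\Psi''$}, where (\ref{8}) collapses to $\Psi_{\lambda}'''(z)=\frac{2\lambda(1+2\lambda)}{z}\Psi_{\lambda}(z)$, so that every zero of $\Psi_{\lambda}''$ off the origin is a transversal crossing whose direction is dictated by the sign of $z$; since both candidate intervals lie on one side of the origin, two zeros in the same interval would force an intermediate crossing of the opposite direction, which is impossible. This removes the $\sqrt{1+2\lambda}$ case analysis and the auxiliary extremal-point constructions entirely, at no loss of rigor (the transversality $\Psi'''(z)\neq 0$ is automatic because $\Psi_{\lambda}>0$ and $\lambda>0$). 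Your treatment of the easy regions, including the observation that on $\left[x_{1}^{\lambda},-x_{1}^{\lambda}\right]$ at most one of $\varphi_{\lambda}''(x),\varphi_{\lambda}''(-x)$ can vanish at a given point, and the IVT existence step, match the paper's.
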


The proofs of Lemma \ref{2106} and Lemma \ref{2107} are provided in Subsection
\ref{0801} and Subsection \ref{0802} respectively.

\begin{remark}
\label{281}For $\mu_{1}=\mu_{2}=1$, it is obvious that $\varphi_{\lambda
}\left(  x\right)  +\varphi_{\lambda}\left(  -x\right)  $ is an even function.
Then by Lemma \ref{2107}, we have $\varphi_{\lambda}^{\prime \prime}\left(
z^{\lambda}\right)  +\varphi_{\lambda}^{\prime \prime}\left(  -z^{\lambda
}\right)  =0$, where $z^{\lambda}$ is defined by Notation \ref{3.2}.
Moreover,
\[
\left \{
\begin{array}
[c]{l}%
\varphi_{\lambda}^{\prime \prime}\left(  x\right)  +\varphi_{\lambda}%
^{\prime \prime}\left(  -x\right)  >0\text{ \ for }\left \vert x\right \vert
>z^{\lambda},\\
\varphi_{\lambda}^{\prime \prime}\left(  x\right)  +\varphi_{\lambda}%
^{\prime \prime}\left(  -x\right)  <0\text{ \ for }\left \vert x\right \vert
<z^{\lambda}.
\end{array}
\right.
\]

\end{remark}

\begin{proof}
[\textnormal{\textbf{Proof of Theorem \ref{2104}}}]
For $\sigma \in \left[  \sigma_{\lambda
},1\right)  $, we have $\sigma z^{\lambda}\in \left[  -x_{1}^{\lambda
},z^{\lambda}\right)  \subseteq \left[  -x_{1}^{\lambda},x_{2}^{\lambda
}\right)  $, which implies $\varphi_{\lambda}^{\prime \prime}\left(  \sigma
z^{\lambda}\right)  <0$ and $\varphi_{\lambda}^{\prime \prime}\left(  -\sigma
z^{\lambda}\right)  \geq0$ by Lemma \ref{2106}. Then it is easy to check that
$\mu_{1}^{\lambda}>0$ and $\mu_{2}^{\lambda}\geq0$ in (\ref{59}), so
$H_{\lambda,\sigma}$ is positive on $\mathbb{R}$.

From (\ref{59}), direct computation shows that
\begin{equation}
\mu_{2}^{\lambda}\varphi_{\lambda}^{\prime \prime}\left(  \sigma z^{\lambda
}\right)  +\mu_{1}^{\lambda}\varphi_{\lambda}^{\prime \prime}\left(  -\sigma
z^{\lambda}\right)  =0\text{.}\label{62701}%
\end{equation}
Then by Lemma \ref{2106} and Lemma \ref{2107}, for $x\in \left(  -\infty,-\sigma
z^{\lambda}\right)  \cup \left(  \sigma z^{\lambda},+\infty \right)  $,
$H_{\lambda,\sigma}^{\prime \prime}\left(  x\right)  >0$ and $H_{\lambda
,\sigma}\left(  x\right)  $ satisfies ODE\ (\ref{52}). By Lemma \ref{2105}, Lemma \ref{2107} and Remark \ref{281}, for $x\in \left(  -\sigma
z^{\lambda},\sigma z^{\lambda}\right)  $, $H_{\lambda,\sigma}^{\prime \prime
}\left(  x\right)  <0$ and $H_{\lambda,\sigma}\left(  x\right)  $ satisfies
ODE\ (\ref{52}).

Now we verify that $H_{\lambda,\sigma}\in C^{2}$. By (\ref{59}) and the
definition of $z^{\lambda}$ in Notation \ref{3.2}, we can easily check that%
\[
\left \{
\begin{array}
[c]{l}%
\lim_{x\rightarrow \left(  \sigma z^{\lambda}\right)  ^{-}}H_{\lambda,\sigma
}\left(  x\right)  =\lim_{x\rightarrow \left(  \sigma z^{\lambda}\right)  ^{+}%
}H_{\lambda,\sigma}\left(  x\right)  \text{,}\\
\lim_{x\rightarrow \left(  \sigma z^{\lambda}\right)  ^{-}}H_{\lambda,\sigma
}^{\prime \prime}\left(  x\right)  =\lim_{x\rightarrow \left(  \sigma
z^{\lambda}\right)  ^{+}}H_{\lambda,\sigma}^{\prime \prime}\left(  x\right)
=0\text{.}%
\end{array}
\right.
\]
Thus $\lim_{x\rightarrow \left(  \sigma z^{\lambda}\right)  ^{-}}%
H_{\lambda,\sigma}^{\prime}\left(  x\right)  =\lim_{x\rightarrow \left(  \sigma
z^{\lambda}\right)  ^{+}}H_{\lambda,\sigma}^{\prime}\left(  x\right)  $ by ODE
(\ref{52}). Since $H_{\lambda,\sigma}$ in (\ref{58}) is obviously even, we
obtain $H_{\lambda,\sigma}\in C^{2}$.
\end{proof}

Now let $\sigma \in \left(  0,1\right)  $ be fixed, we want to find suitable
$\lambda \in \left(  0,\frac{1}{2}\right)  $ such that ODE (\ref{52}) has
positive $C^{2}$ solutions. So we give the following properties of
$\sigma_{\lambda}$.

\begin{proposition}
\label{2109}Let $\sigma_{\lambda}$ be defined by Notation \ref{3.4}. Then
$\sigma_{\lambda}$ is continuous and strictly increasing in $\lambda \in \left(
0,\frac{1}{2}\right)  $, and
\[
\lim_{\lambda \rightarrow0}\sigma_{\lambda}=0,\lim_{\lambda \rightarrow \frac
{1}{2}}\sigma_{\lambda}=1.
\]

\end{proposition}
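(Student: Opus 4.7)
The plan is to establish the three claims in turn. For continuity, I apply the implicit function theorem to the defining equations of $x_1^\lambda$ and $z^\lambda$. By dominated convergence, $\varphi_\lambda(x)$ and its $x$-derivatives are $C^1$ in $\lambda\in(0,1/2)$, uniformly on compact $x$-intervals. Lemma \ref{2106} says that $\varphi_\lambda''$ changes sign transversally at $x_1^\lambda$, so $\varphi_\lambda'''(x_1^\lambda)\neq 0$; Remark \ref{281} (based on Lemma \ref{2107}) says that $z\mapsto \varphi_\lambda''(z)+\varphi_\lambda''(-z)$ changes sign transversally at $z^\lambda$. Hence both $x_1^\lambda$ and $z^\lambda$ are $C^1$ functions of $\lambda$, and so is $\sigma_\lambda=-x_1^\lambda/z^\lambda$.

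For strict monotonicity, I compute $\frac{d}{d\lambda}\sigma_\lambda$ via implicit differentiation. Differentiating ODE (\ref{55}) in $x$ gives $\varphi_\lambda'''(x)=-(1+2\lambda)\varphi_\lambda'(x)-x\varphi_\lambda''(x)$, so $\varphi_\lambda'''(x_1^\lambda)=-(1+2\lambda)\varphi_\lambda'(x_1^\lambda)<0$ (the sign $\varphi_\lambda'(x_1^\lambda)>0$ follows from evaluating (\ref{55}) at $x_1^\lambda$ together with $x_1^\lambda<0$). The implicit function theorem then yields
\[
\frac{dx_1^\lambda}{d\lambda}=-\frac{(\partial_\lambda\varphi_\lambda'')(x_1^\lambda)}{\varphi_\lambda'''(x_1^\lambda)},\qquad \frac{dz^\lambda}{d\lambda}=-\frac{(\partial_\lambda\varphi_\lambda'')(z^\lambda)+(\partial_\lambda\varphi_\lambda'')(-z^\lambda)}{\varphi_\lambda'''(z^\lambda)-\varphi_\lambda'''(-z^\lambda)},
\]
with $\partial_\lambda\varphi_\lambda''(x)=-2\int_0^\infty y^{-2\lambda}(\ln y)[(y-x)^2-1]e^{-(y-x)^2/2}\,dy$. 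The required strict inequality $\frac{d}{d\lambda}\log(-x_1^\lambda)>\frac{d}{d\lambda}\log z^\lambda$ then reduces to a comparison of these $y^{-2\lambda}\ln y$-weighted integrals at $x_1^\lambda$ versus at $\pm z^\lambda$.

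For the endpoint limits, I use asymptotic analysis. As $\lambda\to 0^+$, formula (\ref{3}) gives $\varphi_0''(x)=-xe^{-x^2/2}$, whose unique zero is $x=0$; hence $x_1^\lambda\to 0$ by continuity. Since $\varphi_0''(z)+\varphi_0''(-z)\equiv 0$, the location of $z^\lambda$ is controlled by the first-order correction $H_1''(x):=(\partial_\lambda[\varphi_\lambda''(x)+\varphi_\lambda''(-x)])|_{\lambda=0}$; an integration by parts shows $H_1''(0)<0$ while $H_1''(x)>0$ for large $x$, so the unique positive zero $z_0$ of $H_1''$ gives $z^\lambda\to z_0>0$ and hence $\sigma_\lambda\to 0$. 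As $\lambda\to (1/2)^-$, writing $1-2\lambda=2\epsilon$, a Watson-type localization of the singular weight $y^{-1+2\epsilon}$ near $y=0$ yields $\varphi_\lambda''(x)\sim e^{-x^2/2}(x^2-1)/(2\epsilon)$; the zeros $\pm 1$ of the leading term force $x_1^\lambda\to -1$ and $z^\lambda\to 1$, so $\sigma_\lambda\to 1$.

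The main obstacle is strict monotonicity: the sign of the combination of implicit-differentiation expressions is not manifest, since the weight $\ln y$ changes sign at $y=1$ and the formulas mix values of $\partial_\lambda\varphi_\lambda''$ at three distinct points $x_1^\lambda,\pm z^\lambda$. Establishing the required inequality will likely require exploiting the ODE structure more deeply---for instance, by rewriting $\partial_\lambda\varphi_\lambda''$ in terms of $\partial_\lambda\varphi_\lambda$ via the $\lambda$-differentiated ODE---rather than working directly with the raw integral representations.
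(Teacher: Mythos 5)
The central claim---strict monotonicity of $\sigma_{\lambda}$---is not actually proved in your proposal: you set up the implicit-differentiation formulas and then concede that the sign of the resulting combination is ``not manifest''. That is precisely the hard content of the proposition, and the difficulty is an artifact of your formulation. You try to prove the single inequality $\frac{d}{d\lambda}\log\left(  -x_{1}^{\lambda}\right)  >\frac{d}{d\lambda}\log z^{\lambda}$, which entangles values of $\partial_{\lambda}\varphi_{\lambda}^{\prime \prime}$ at the three points $x_{1}^{\lambda}$ and $\pm z^{\lambda}$. The paper instead proves the two stronger, decoupled statements that $x_{1}^{\lambda}$ is strictly decreasing (Lemma \ref{2303}) and $z^{\lambda}$ is strictly decreasing (Lemma \ref{1801}) in $\lambda$; since $-x_{1}^{\lambda}$ and $z^{\lambda}$ are positive, with numerator strictly increasing and denominator strictly decreasing, $\sigma_{\lambda}=-x_{1}^{\lambda}/z^{\lambda}$ is strictly increasing with no further computation. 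Moreover, the monotonicity of $x_{1}^{\lambda}$ is obtained not by implicit differentiation but by fixing $x\in \left(  -1,0\right)  $ and showing that $\lambda \mapsto f_{x}\left(  \lambda \right)  :=\varphi_{\lambda}^{\prime \prime}\left(  x\right)  $ has exactly one sign change on $\left(  0,\frac{1}{2}\right)  $ (from positive to negative), which, combined with the sign structure of $\varphi_{\lambda}^{\prime \prime}$ in $x$ from Lemma \ref{2106}, forces $x_{1}^{\lambda^{\prime \prime}}<x_{1}^{\lambda^{\prime}}$ for $\lambda^{\prime}<\lambda^{\prime \prime}$. The analogous statement for $z^{\lambda}$ is considerably more delicate and occupies Subsection \ref{0803} together with the Appendix (via the perturbed functions $F_{x}^{\delta}$ and a limit $\delta \rightarrow0$). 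To salvage your plan you would have to replace your single logarithmic-derivative inequality by the two separate sign conditions $\frac{d}{d\lambda}x_{1}^{\lambda}<0$ and $\frac{d}{d\lambda}z^{\lambda}<0$, and actually establish them; as written, the proof has a genuine gap at its main step.

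On the remaining points: your continuity argument via the implicit function theorem is sound, and the transversality you need ($\varphi_{\lambda}^{\prime \prime \prime}\left(  x_{1}^{\lambda}\right)  <0$, and $\Psi_{\lambda}^{\prime \prime \prime}\left(  z^{\lambda}\right)  >0$ for the symmetrized solution) does follow from the ODE exactly as you indicate. Your endpoint asymptotics are plausible but heavier than necessary: for $\lambda \rightarrow0$ one does not need the first-order perturbation locating $z^{\lambda}$, since $z^{\lambda}>1$ for all $\lambda$ (Lemma \ref{2503}) already gives $\sigma_{\lambda}<-x_{1}^{\lambda}\rightarrow0$; and for $\lambda \rightarrow \frac{1}{2}$ the paper extracts $x_{1}^{\lambda}\rightarrow-1$ and $z^{\lambda}\rightarrow1$ from the same fixed-$x$ sign-change analysis together with the sandwich $-x_{1}^{\lambda}<z^{\lambda}<x_{2}^{\lambda}$ and Lemma \ref{61405}, whereas your singular-weight localization would still need a uniformity argument near the zeros to be conclusive.
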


In order to prove Proposition \ref{2109}, we need the following lemmas.

\begin{lemma}
\label{2303}Let $x_{1}^{\lambda}$ be defined by Notation \ref{3.2}. Then
$x_{1}^{\lambda}$ is continuous and strictly decreasing in $\lambda \in \left(
0,\frac{1}{2}\right)  $, and%
\begin{equation}
\lim_{\lambda \rightarrow0}x_{1}^{\lambda}=0,\lim_{\lambda \rightarrow \frac
{1}{2}}x_{1}^{\lambda}=-1\text{.}\label{3001}%
\end{equation}

\end{lemma}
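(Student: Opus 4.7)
The plan is to view $x_1^\lambda$ as implicitly defined by $F(\lambda,x) := \varphi_\lambda''(x) = 0$ and apply the implicit function theorem. Joint $C^1$ regularity of $F$ on $(0,\frac{1}{2}) \times \mathbb{R}$ comes from differentiating under the integral in (\ref{2402}) using dominated convergence, since the Gaussian factor gives rapid decay and $y^{-2\lambda}|\log y|^{k}$ is integrable near $0$ for every $k$ when $\lambda<\frac{1}{2}$. For non-degeneracy in $x$, I would differentiate the linear ODE (\ref{55}) satisfied by $\varphi_\lambda$ to obtain $\varphi_\lambda''' = -(1+2\lambda)\varphi_\lambda' - x\varphi_\lambda''$; evaluating at $x_1^\lambda$ and using $\varphi_\lambda''(x_1^\lambda)=0$ yields $\partial_x F(\lambda, x_1^\lambda) = -(1+2\lambda)\varphi_\lambda'(x_1^\lambda)$, which is strictly negative because (\ref{2401}) shows $\varphi_\lambda'(x_1^\lambda)>0$ (the integrand has $y \geq 0$ and $y-x_1^\lambda > 0$). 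The implicit function theorem then provides a $C^1$ curve $\lambda \mapsto x_1^\lambda$, giving continuity on $(0,\frac{1}{2})$.

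For strict monotonicity I would differentiate $F$ in $\lambda$ under the integral to obtain
\[
\partial_\lambda F(\lambda,x) = -2\int_0^{+\infty} \log y \cdot y^{-2\lambda} [(y-x)^2 - 1] e^{-(y-x)^2/2}\, dy,
\]
and then subtract a multiple of the identity $F(\lambda,x_1^\lambda)=0$ to replace $\log y$ by $\log y - \log(x_1^\lambda + 1)$. Setting $a := x_1^\lambda \in (-1,0)$, so $a+1 \in (0,1)$, both factors $\log y - \log(a+1)$ and $(y-a)^2 - 1$ are negative on $(0,a+1)$ and positive on $(a+1,+\infty)$, making the integrand pointwise non-negative and not identically zero. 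Hence $\partial_\lambda F(\lambda, x_1^\lambda) < 0$, and combined with $\partial_x F(\lambda, x_1^\lambda) < 0$ the implicit function formula gives $\frac{d}{d\lambda}x_1^\lambda = -\partial_\lambda F/\partial_x F < 0$.

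The two limits are handled separately. For $\lambda \to 0^+$, dominated convergence in (\ref{2402}) gives $\varphi_\lambda''(x) \to \int_0^{+\infty}[(y-x)^2-1] e^{-(y-x)^2/2}\,dy = -x e^{-x^2/2}$ (by one integration by parts), whose only zero in $[-1,0]$ is $x=0$; uniform convergence on compacts then forces $x_1^\lambda \to 0$. For $\lambda \to (\frac{1}{2})^-$, I would argue by contradiction: if along a subsequence $x_1^\lambda \to x^* \in (-1,0]$, then on a small interval $(0,\varepsilon)$ the integrand of $\varphi_\lambda''(x_1^\lambda)$ is asymptotic to $y^{-2\lambda}(x^{*2}-1) e^{-x^{*2}/2}$ with $x^{*2}-1$ bounded above by a negative constant, while $\int_0^\varepsilon y^{-2\lambda}\,dy = \varepsilon^{1-2\lambda}/(1-2\lambda) \to +\infty$, so the singular contribution drives $F(\lambda, x_1^\lambda) \to -\infty$; since the non-singular part of the integral stays uniformly bounded, this contradicts $F(\lambda,x_1^\lambda)=0$.

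The main obstacle is the $\lambda \to (\frac{1}{2})^-$ limit: one must carefully split the domain into a neighborhood of $0$ (where the singularity dominates) and its complement (where standard bounds apply), and track the dependence of the two estimates on $\lambda$ uniformly in $x$ ranging over a compact subset of $(-1,0]$. The monotonicity step is the most delicate otherwise, but the stochastic-dominance-style rewriting using the identity $F(\lambda,x_1^\lambda)=0$ reduces it to a pointwise sign check.
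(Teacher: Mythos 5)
Your proposal is correct, and it organizes the argument in a genuinely different way from the paper. The paper never invokes the implicit function theorem: it fixes $x\in\left(-1,0\right)$, studies $f_{x}\left(\lambda\right)=\varphi_{\lambda}^{\prime\prime}\left(x\right)$ as a function of $\lambda$ alone, shows $f_{x}\left(0^{+}\right)>0$ and $f_{x}\left(\lambda\right)\rightarrow-\infty$ as $\lambda\rightarrow\frac{1}{2}$, and rules out a second zero by a contradiction argument at a hypothetical critical point; strict monotonicity of $x_{1}^{\lambda}$ then follows by feeding this back into Lemma \ref{2106}, and both limits in (\ref{3001}) drop out simultaneously from the arbitrariness of $x$, while continuity is simply asserted. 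You instead work on the zero locus of $F\left(\lambda,x\right)=\varphi_{\lambda}^{\prime\prime}\left(x\right)$: the ODE identity gives $\partial_{x}F\left(\lambda,x_{1}^{\lambda}\right)=-\left(1+2\lambda\right)\varphi_{\lambda}^{\prime}\left(x_{1}^{\lambda}\right)<0$ (correct, since $x_{1}^{\lambda}<0$ makes the integrand of (\ref{2401}) positive), and shifting $\log y$ by $\log\left(x_{1}^{\lambda}+1\right)$ aligns the sign change of the logarithmic factor with that of $\left(y-x_{1}^{\lambda}\right)^{2}-1$ at $y=x_{1}^{\lambda}+1$, making the integrand of $\partial_{\lambda}F$ single-signed, so the implicit function theorem delivers continuity and $dx_{1}^{\lambda}/d\lambda<0$ in one stroke. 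The two proofs share the same engine --- the sign of the $\lambda$-derivative of $\varphi_{\lambda}^{\prime\prime}$ at a zero --- but your normalization makes the pointwise sign check immediate and replaces the paper's case analysis, and it also supplies an actual proof of the continuity the paper treats as obvious. The price is that the two endpoint limits must then be argued separately: your $\lambda\rightarrow0$ step is the paper's computation (\ref{0204}) combined with joint continuity of $F$ at $\lambda=0$, and your $\lambda\rightarrow\frac{1}{2}$ step is the paper's divergence estimate run along the moving point $x_{1}^{\lambda}$, which does require the uniformity you flag (take $\varepsilon<1+x^{\ast}$ so that $\left(y-x_{1}^{\lambda}\right)^{2}-1$ stays below a negative constant on $\left(0,\varepsilon\right)$, and bound the complementary integral by a $\lambda$-independent constant); since monotonicity already guarantees both one-sided limits exist, these identifications close the proof.
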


\begin{lemma}
\label{1801}Let $z^{\lambda}$ be defined by Notation \ref{3.2}. Then
$z^{\lambda}$ is continuous and strictly decreasing in $\lambda \in \left(
0,\frac{1}{2}\right)  $, and%
\[
\lim_{\lambda \rightarrow \frac{1}{2}}z^{\lambda}=1\text{.}%
\]

\end{lemma}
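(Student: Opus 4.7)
The plan is to characterize $z^{\lambda}$ as the unique positive zero of
\[
F(\lambda,z):=\varphi_{\lambda}''(z)+\varphi_{\lambda}''(-z),
\]
and to use the implicit function theorem (hereafter IFT) for continuity and strict monotonicity, while the limit $z^{\lambda}\to 1$ will come from a direct asymptotic analysis of the integral defining $F$. Setting $\Psi_{\lambda}(x):=\varphi_{\lambda}(x)+\varphi_{\lambda}(-x)$, one has $F(\lambda,z)=\Psi_{\lambda}''(z)$, and by Remark \ref{281}, $F(\lambda,\cdot)$ changes sign from negative to positive as $z$ crosses $z^{\lambda}$.

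For continuity I would first note that $F$ is jointly $C^{1}$ on $(0,\tfrac{1}{2})\times\mathbb{R}$ by differentiation under the integral sign in (\ref{2402}), justified because $y^{-2\lambda}|\ln y|$ is integrable near $y=0$ for $\lambda<\tfrac{1}{2}$. To apply IFT we need $\partial_{z}F(\lambda,z^{\lambda})\ne 0$. The sign change gives $\partial_{z}F(\lambda,z^{\lambda})\ge 0$; strictness follows from the linear ODE (\ref{55}): differentiating it once yields $y'''+xy''+(1+2\lambda)y'=0$, so if both $\Psi_{\lambda}''(z^{\lambda})$ and $\Psi_{\lambda}'''(z^{\lambda})$ vanished we would successively obtain $\Psi_{\lambda}'(z^{\lambda})=0$ and then $\Psi_{\lambda}(z^{\lambda})=0$ from (\ref{55}), so by Cauchy uniqueness $\Psi_{\lambda}\equiv 0$, contradicting $\Psi_{\lambda}>0$. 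Thus $\partial_{z}F(\lambda,z^{\lambda})>0$ and IFT yields $C^{1}$ (in particular continuous) dependence of $z^{\lambda}$ on $\lambda$.

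For strict monotonicity the IFT formula gives $dz^{\lambda}/d\lambda=-\partial_{\lambda}F(\lambda,z^{\lambda})/\partial_{z}F(\lambda,z^{\lambda})$, so it suffices to show $\partial_{\lambda}F(\lambda,z^{\lambda})>0$. Differentiating (\ref{2402}) in $\lambda$ produces
\[
\partial_{\lambda}F(\lambda,z)=-2\int_{0}^{+\infty}y^{-2\lambda}\ln y\cdot\bigl\{[(y-z)^{2}-1]e^{-(y-z)^{2}/2}+[(y+z)^{2}-1]e^{-(y+z)^{2}/2}\bigr\}\,dy,
\]
and at $z=z^{\lambda}$ the weight-free integral vanishes, meaning one may freely subtract any constant multiple of the bracketed factor from $\ln y$. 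I would then split the integration range at the zeros of the bracket (namely $y=z^{\lambda}\pm 1$, and $y=1-z^{\lambda}$ when $z^{\lambda}<1$) and use monotonicity of $\ln y$ together with the sign pattern to extract a definite sign. Establishing this sign is the main technical obstacle of the proof.

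Finally, for the limit, $z^{\lambda}>-x_{1}^{\lambda}$ by Notation \ref{3.2} and Lemma \ref{2107}, while $-x_{1}^{\lambda}\to 1$ by Lemma \ref{2303}; hence $\liminf_{\lambda\to 1/2}z^{\lambda}\ge 1$. For the reverse inequality, fix $\varepsilon>0$ and inspect $F(\lambda,1+\varepsilon)$ via (\ref{2402}): at $y=0$ the bracket equals $2[(1+\varepsilon)^{2}-1]e^{-(1+\varepsilon)^{2}/2}>0$, so by continuity there exist $\delta,c>0$ with the integrand on $(0,\delta)$ bounded below by $cy^{-2\lambda}$, yielding
\[
F(\lambda,1+\varepsilon)\ge c\int_{0}^{\delta}y^{-2\lambda}\,dy-C=\frac{c\,\delta^{1-2\lambda}}{1-2\lambda}-C\to+\infty
\]
as $\lambda\to\tfrac{1}{2}$, where $C$ is a uniform bound for the tail on $(\delta,+\infty)$ (finite since the Gaussian factors decay). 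Thus $F(\lambda,1+\varepsilon)>0$ for $\lambda$ near $\tfrac{1}{2}$, so the sign change of $F$ at $z^{\lambda}$ forces $z^{\lambda}<1+\varepsilon$, and letting $\varepsilon\to 0$ gives $\lim_{\lambda\to 1/2}z^{\lambda}=1$.
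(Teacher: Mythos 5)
Your framework is sound and parts of it are genuinely nicer than the paper's: the implicit-function-theorem setup for continuity, with the nondegeneracy $\Psi_{\lambda}'''(z^{\lambda})\neq 0$ extracted from the ODE (if $\Psi_{\lambda}''$ and $\Psi_{\lambda}'''$ both vanished at a point then (\ref{55}) and its derivative would force $\Psi_{\lambda}'$ and $\Psi_{\lambda}$ to vanish there too, hence $\Psi_{\lambda}\equiv 0$), is correct and cleaner than the paper's bare assertion that continuity is obvious. Your proof of $\lim_{\lambda\to 1/2}z^{\lambda}=1$ is also correct and more self-contained than the paper's: the lower bound via $z^{\lambda}>-x_{1}^{\lambda}$ matches the paper, but your upper bound (showing $F(\lambda,1+\varepsilon)\to+\infty$ directly from the divergence of $\int_{0}^{\delta}y^{-2\lambda}\,dy$) bypasses the paper's Lemma \ref{61405} on $x_{2}^{\lambda}$.

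However, there is a genuine gap exactly where you flag "the main technical obstacle": the sign of $\partial_{\lambda}F(\lambda,z^{\lambda})$, i.e.\ the strict monotonicity itself, is not proved, and this is the entire content of the paper's Subsection \ref{0803} together with its Appendix. Moreover, the route you sketch is unlikely to close the gap as stated. The bracketed factor $[(y-z)^{2}-1]e^{-(y-z)^{2}/2}+[(y+z)^{2}-1]e^{-(y+z)^{2}/2}$ changes sign \emph{twice} on $(0,+\infty)$ when $z>1$ (this is essentially the content of the paper's Lemma A.3 on $h$: positive near the endpoints, negative on a middle interval), so the standard device of subtracting a constant from the monotone weight $\ln y$ — which handles a single sign change — does not yield a definite sign. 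The paper instead has to weight by a \emph{quadratic} in $\ln y$, namely $\ln\frac{y}{x-1}\ln\frac{y}{x+1}$, chosen to vanish at both sign-change points of the dominant term (see (\ref{3002}), (\ref{61204}) and Lemma A.5), which converts the problem into showing positivity of $\frac{1}{4}(F_{x}^{\delta})''+\frac{1}{2}\ln(x^{2}-1)(F_{x}^{\delta})'+\ln(x-1)\ln(x+1)F_{x}^{\delta}$; even then the case $x\in(1,2)$ requires several pages of delicate numerical-style estimates, and the paper also needs the perturbation $F_{x}^{\delta}=(1+\delta)f_{x}+g_{x}$ with $\delta\downarrow 0$ to transfer the conclusion from $z_{2}^{\lambda,1+\delta}$ to $z^{\lambda}$. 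Without an argument of at least this strength, the strict decrease of $z^{\lambda}$ — and hence the lemma — remains unestablished.
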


The proofs of Lemma \ref{2303} and Lemma \ref{1801} are provided in Subsection
\ref{1501} and Subsection \ref{0803} respectively.

\begin{proof}
[\textnormal{\textbf{Proof of Proposition \ref{2109}}}]According to Notation \ref{3.4}, the
proposition is directly proved by Lemma \ref{2303} and Lemma \ref{1801}.
\end{proof}

\begin{notation}
\label{3.13}Let $\sigma \in \left(  0,1\right)  $ be any fixed constant. Let
$\lambda_{\sigma}\in \left(  0,\frac{1}{2}\right)  $ denote the unique constant
satisfying
\[
\sigma_{\lambda_{\sigma}}=\sigma \text{,}%
\]
where $\sigma_{\lambda_{\sigma}}$ is defined by Notation \ref{3.4}.
\end{notation}

According to Proposition \ref{2109}, $\lambda_{\sigma}$ is well-defined.
Moreover, we have the following corollary by Proposition \ref{2109}.

\begin{corollary}
\label{2309}Let $\sigma \in \left(  0,1\right)  $ be any fixed constant and let
$\lambda_{\sigma}$ be defined as above. Then $\lambda_{\sigma}$ is strictly
increasing in $\sigma \in \left(  0,1\right)  $, and
\[
\lim_{\sigma \rightarrow0}\lambda_{\sigma}=0,\lim_{\sigma \rightarrow1}%
\lambda_{\sigma}=\frac{1}{2}.
\]

\end{corollary}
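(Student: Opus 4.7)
The plan is to deduce Corollary \ref{2309} directly from Proposition \ref{2109} via the standard inverse-function argument for continuous, strictly monotone maps between open intervals, so no new estimates are needed beyond what the proposition already provides.

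First I would restate Proposition \ref{2109} in functional form: the map $F : (0, 1/2) \to (0, 1)$ defined by $F(\lambda) = \sigma_\lambda$ is continuous, strictly increasing, and satisfies $\lim_{\lambda \to 0^+} F(\lambda) = 0$ and $\lim_{\lambda \to (1/2)^-} F(\lambda) = 1$. Continuity together with the intermediate value theorem then shows that $F$ surjects onto $(0,1)$, and strict monotonicity gives injectivity; hence $F$ is a bijection from $(0, 1/2)$ onto $(0, 1)$. By Notation \ref{3.13}, the map $\sigma \mapsto \lambda_\sigma$ is precisely the compositional inverse $F^{-1}$, and in particular it is well-defined on the entire interval $(0,1)$.

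Second, I would invoke the classical fact that the inverse of a continuous, strictly increasing bijection between open intervals is itself continuous and strictly increasing, which immediately yields the asserted monotonicity of $\lambda_\sigma$ in $\sigma$. The two boundary limits follow at once from inversion: given any $\varepsilon \in (0, 1/2)$, strict monotonicity of $F^{-1}$ shows that $\sigma \in (0, \sigma_\varepsilon)$ forces $\lambda_\sigma < \varepsilon$, proving $\lim_{\sigma \to 0^+} \lambda_\sigma = 0$; the symmetric argument using $\sigma > \sigma_{1/2 - \varepsilon}$ yields $\lim_{\sigma \to 1^-} \lambda_\sigma = 1/2$. I do not anticipate any substantive obstacle, since all of the nontrivial analytic content (continuity, strict monotonicity, and the endpoint behavior of $\sigma_\lambda$) is already absorbed into Proposition \ref{2109}; the corollary is essentially a formal restatement of that proposition phrased in the inverse variable.
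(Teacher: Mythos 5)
Your proposal is correct and follows the same route as the paper, which simply deduces the corollary from Proposition \ref{2109} (the paper leaves the inverse-function argument implicit, while you spell it out). No issues.
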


We can now state the second main result of explicit positive solutions to ODE
(\ref{52}).

\begin{theorem}
\label{2112}Let $\sigma \in \left(  0,1\right)  $ be any fixed constant. Then
for each $\lambda \in \left(  0,\lambda_{\sigma}\right]  $, $H_{\lambda,\sigma}$
is the $C^{2}$ positive solution to ODE (\ref{52}), where $\lambda_{\sigma}$
is defined by Notation \ref{3.13}\ and $H_{\lambda,\sigma}$ is defined by
(\ref{58}).
\end{theorem}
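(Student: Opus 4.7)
The plan is to recognize Theorem \ref{2112} as an immediate reformulation of Theorem \ref{2104} via the monotonicity of $\sigma_{\lambda}$. Theorem \ref{2104} gives positive $C^{2}$ solutions under the constraint $\sigma \in [\sigma_{\lambda},1)$, where $\lambda$ is regarded as the fixed parameter. Theorem \ref{2112} inverts the roles: with $\sigma$ fixed, it describes the admissible range of $\lambda$. Since $\lambda_{\sigma}$ is defined in Notation \ref{3.13} precisely as the inverse of the map $\lambda \mapsto \sigma_{\lambda}$ at the point $\sigma$, the two constraints should coincide.

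First I would fix $\sigma \in (0,1)$ and take an arbitrary $\lambda \in (0,\lambda_{\sigma}]$. By Notation \ref{3.13} we have $\sigma_{\lambda_{\sigma}}=\sigma$. Proposition \ref{2109} asserts that $\lambda \mapsto \sigma_{\lambda}$ is continuous and strictly increasing on $(0,\tfrac{1}{2})$, so the inequality $\lambda \leq \lambda_{\sigma}$ transfers to $\sigma_{\lambda} \leq \sigma_{\lambda_{\sigma}} = \sigma$. Together with $\sigma<1$, this gives $\sigma \in [\sigma_{\lambda},1)$, which is exactly the hypothesis of Theorem \ref{2104}.

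Next I would invoke Theorem \ref{2104} with this pair $(\lambda,\sigma)$: the function $H_{\lambda,\sigma}$ defined by formula (\ref{58}), with coefficients $\mu_{1}^{\lambda}, \mu_{2}^{\lambda}$ given by (\ref{59}), is a positive $C^{2}$ solution to ODE (\ref{52}). Since $\lambda \in (0,\lambda_{\sigma}]$ was arbitrary, this establishes the conclusion of Theorem \ref{2112}.

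There is no real obstacle here: the statement is a corollary whose proof consists entirely of unwinding Notation \ref{3.13} and quoting the monotonicity from Proposition \ref{2109}. The only point worth verifying is that the endpoint $\lambda = \lambda_{\sigma}$ is genuinely included, which is fine because the defining equality $\sigma_{\lambda_{\sigma}}=\sigma$ places $\sigma$ in the \emph{closed} left endpoint of the interval $[\sigma_{\lambda_{\sigma}},1)$ appearing in Theorem \ref{2104}.
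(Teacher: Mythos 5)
Your proposal is correct and follows essentially the same route as the paper: it uses the monotonicity of $\lambda \mapsto \sigma_{\lambda}$ from Proposition \ref{2109} together with the defining relation $\sigma_{\lambda_{\sigma}}=\sigma$ to conclude $\sigma_{\lambda}\leq\sigma$, and then applies Theorem \ref{2104}. No issues.
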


\begin{proof}
For each $\lambda \in \left(  0,\lambda_{\sigma}\right]  $, by Theorem
\ref{2104}, we know that there exists a $\sigma_{\lambda}$ such that for every
$\sigma^{\prime}\in \left[  \sigma_{\lambda},1\right)  $, ODE (\ref{52}) has
positive solutions. Since $\lambda \leq \lambda_{\sigma}$, we have
$\sigma_{\lambda}\leq \sigma_{\lambda_{\sigma}}=\sigma$ by Proposition
\ref{2109}. Thus $H_{\lambda,\sigma}$ is the $C^{2}$ positive solution to ODE
(\ref{52}), which completes the proof.
\end{proof}

Next theorem is the\ third main result in this section, which gives the
explicit positive solutions to $G$-heat equation (\ref{50}) for each fixed
$\sigma \in \left(  0,1\right)  $.

\begin{theorem}
\label{2113}Let $\sigma \in \left(  0,1\right)  $ be any fixed constant. Then
for each $\lambda \in \left(  0,\lambda_{\sigma}\right]  $,
\[
u^{H_{\lambda,\sigma}}\left(  t,x\right)  =\left(  1+t\right)  ^{-\lambda
}H_{\lambda,\sigma}\left(  \frac{x}{\sqrt{1+t}}\right)
\]
is the positive solution to $G$-heat equation (\ref{50}) with initial
condition $u\left(  0,x\right)  =H_{\lambda,\sigma}\left(  x\right)  $, where
$H_{\lambda,\sigma}$ is defined by (\ref{58}).
\end{theorem}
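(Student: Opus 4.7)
The plan is to observe that Theorem \ref{2113} is essentially an immediate corollary of Theorem \ref{2112} combined with Theorem \ref{2102}, so the proof is just a matter of chaining the two results together and checking the remaining conditions (positivity and initial data).

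First I would invoke Theorem \ref{2112}: for the fixed $\sigma\in(0,1)$ and any $\lambda\in(0,\lambda_{\sigma}]$, the function $H_{\lambda,\sigma}$ defined by (\ref{58}) is a positive $C^{2}$ solution to ODE (\ref{52}). With this in hand, I would apply the ``if'' direction of Theorem \ref{2102} to $H=H_{\lambda,\sigma}$: since $H_{\lambda,\sigma}$ solves (\ref{52}), the function
\[
u^{H_{\lambda,\sigma}}(t,x)=(1+t)^{-\lambda}H_{\lambda,\sigma}\!\left(\frac{x}{\sqrt{1+t}}\right)
\]
satisfies the $G$-heat equation (\ref{50}) on $[0,+\infty)\times\mathbb{R}$. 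Evaluating at $t=0$ gives $u^{H_{\lambda,\sigma}}(0,x)=H_{\lambda,\sigma}(x)$, which verifies the initial condition.

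It remains to check positivity of $u^{H_{\lambda,\sigma}}$ on $[0,+\infty)\times\mathbb{R}$. This follows at once because $(1+t)^{-\lambda}>0$ for every $t\ge 0$, and $H_{\lambda,\sigma}>0$ on $\mathbb{R}$ by Theorem \ref{2112}; therefore $u^{H_{\lambda,\sigma}}(t,x)>0$ for all $(t,x)$. Since all three conclusions (PDE, initial condition, positivity) have been verified, the proof is complete.

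There is no real obstacle here: the two main theorems doing the work, namely the ODE-to-PDE reduction of Theorem \ref{2102} and the construction of positive ODE solutions in Theorem \ref{2112}, have already been established, and the range $\lambda\in(0,\lambda_{\sigma}]$ is precisely tailored (via Notation \ref{3.13} and Proposition \ref{2109}) so that Theorem \ref{2112} applies. The only thing one might want to emphasize is that no further regularity argument is needed, since $H_{\lambda,\sigma}\in C^{2}$ yields $u^{H_{\lambda,\sigma}}\in C^{1,2}$ directly from the product/chain rule on the defining formula.
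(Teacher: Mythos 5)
Your proposal is correct and follows exactly the paper's argument: the paper's own proof is the one-line observation that the result follows from Theorem \ref{2102} and Theorem \ref{2112}, which is precisely the chaining you carry out (with the positivity and initial-condition checks made explicit). No issues.
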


\begin{proof}
The proof is straightforward according to Theorem \ref{2102} and Theorem \ref{2112}.
\end{proof}

\begin{remark}
\label{2114}For every $\lambda \in \left(  \lambda_{\sigma},\frac{1}{2}\right)
$, there is no positive $C^{2}$ solutions to ODE (\ref{52}) and $G$-heat
equation (\ref{50}), which will be proved by Proposition \ref{2118}.
\end{remark}

The following proposition gives an estimation of $\lambda_{\sigma}$ for each
fixed $\sigma \in \left(  0,1\right)  $.

\begin{proposition}
\label{2115}Let $\sigma \in \left(  0,1\right)  $ be any fixed constant, and let $\lambda_{\sigma}$ be defined by Notation \ref{3.13}. Then
\[
\lambda_{\sigma}>\frac{\sigma^{2}}{2}.
\]
\end{proposition}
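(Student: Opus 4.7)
The plan is to translate the statement into a single concrete inequality about $\varphi_\lambda''$ and then verify it directly from the integral representation, using only that $z^\lambda>1$.

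First I rewrite the target. Since $\lambda\mapsto\sigma_\lambda$ is strictly increasing (Proposition \ref{2109}) and $\sigma_{\lambda_\sigma}=\sigma$ by Notation \ref{3.13}, the conclusion $\lambda_\sigma>\sigma^2/2$ is equivalent to $\sigma_{\sigma^2/2}<\sigma$. Setting $\lambda:=\sigma^2/2$, so that $\sigma=\sqrt{2\lambda}$, this reads $-x_1^\lambda/z^\lambda<\sqrt{2\lambda}$, i.e.\ $-\sqrt{2\lambda}\,z^\lambda<x_1^\lambda$. Because $-\sqrt{2\lambda}\,z^\lambda<0<x_2^\lambda$, Lemma \ref{2106} shows that this is equivalent to the one-point inequality
\[
\varphi_\lambda''\!\left(-\sqrt{2\lambda}\,z^\lambda\right)>0.
\]

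Next I would compute this quantity using ODE (\ref{55}) satisfied by $\varphi_\lambda$, namely $\varphi_\lambda''(x)=-x\varphi_\lambda'(x)-2\lambda\varphi_\lambda(x)$. Plugging in the integral formulas (\ref{5333}) and (\ref{2401}) at $x=-\sqrt{2\lambda}\,z^\lambda$ and collecting the two integrands into one, the inequality reduces to
\[
\int_0^{+\infty} y^{-2\lambda}\left[\sqrt{2\lambda}\,z^\lambda\,y+2\lambda\bigl((z^\lambda)^2-1\bigr)\right]\exp\!\left\{-\tfrac{1}{2}\bigl(y+\sqrt{2\lambda}\,z^\lambda\bigr)^2\right\}dy>0.
\]

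Finally I would invoke Lemma \ref{1801}: since $z^\lambda$ is strictly decreasing on $(0,\tfrac{1}{2})$ with $\lim_{\lambda\to 1/2^-}z^\lambda=1$, we have $z^\lambda>1$ for every $\lambda\in(0,\tfrac{1}{2})$. Then both bracketed terms are nonnegative on $\{y>0\}$, with the $y$-independent term $2\lambda((z^\lambda)^2-1)$ strictly positive, so the integrand is strictly positive and the integral is positive. The whole argument is essentially conceptual: the only nontrivial input is the fact $z^\lambda>1$ supplied by Lemma \ref{1801}, and once the reformulation in terms of $\varphi_\lambda''\!\bigl(-\sqrt{2\lambda}\,z^\lambda\bigr)$ is observed, the verification is immediate. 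There is no real obstacle beyond spotting the right reformulation.
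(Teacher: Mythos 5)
Your proof is correct. It rests on the same two analytic facts as the paper's own argument: the positivity of the integrand in the ODE-substituted formula (\ref{3}), namely $y^{-2\lambda}\left[-2\lambda-x\left(y-x\right)\right]\exp\left\{-\left(y-x\right)^{2}/2\right\}$, for $x<0$ with $x^{2}\geq2\lambda$, and the bound $z^{\lambda}>1$ supplied by Lemma \ref{1801}. What differs is the packaging. The paper works at $\lambda=\lambda_{\sigma}$, proves $2\lambda_{\sigma}>\left(x_{1}^{\lambda_{\sigma}}\right)^{2}$ by contradiction (otherwise $\varphi_{\lambda_{\sigma}}^{\prime\prime}\left(x_{1}^{\lambda_{\sigma}}\right)>0$, contradicting Notation \ref{3.2}), then divides by $\left(z^{\lambda_{\sigma}}\right)^{2}>1$ and uses $\sigma_{\lambda_{\sigma}}=\sigma$. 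You instead work at $\lambda=\sigma^{2}/2$, use the strict monotonicity of $\sigma_{\lambda}$ (Proposition \ref{2109}) to reduce the claim to the single sign condition $\varphi_{\lambda}^{\prime\prime}\left(-\sqrt{2\lambda}\,z^{\lambda}\right)>0$, and verify it directly: your displayed integrand is exactly (\ref{3}) evaluated at $x=-\sqrt{2\lambda}\,z^{\lambda}$, where the constant part $x^{2}-2\lambda=2\lambda\left(\left(z^{\lambda}\right)^{2}-1\right)$ is strictly positive precisely because $z^{\lambda}>1$. Your version avoids the contradiction step and is arguably cleaner, at the price of one extra (but legitimately available) input, Proposition \ref{2109}; you also correctly noted that the equivalence with $-\sqrt{2\lambda}\,z^{\lambda}<x_{1}^{\lambda}$ via Lemma \ref{2106} needs $-\sqrt{2\lambda}\,z^{\lambda}<0<x_{2}^{\lambda}$ to rule out the other positivity branch. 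Everything checks out.
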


\begin{proof}
By (\ref{3}), we have
\begin{align*}
\varphi_{\lambda_{\sigma}}^{\prime \prime}\left(  x\right)   &  =\int
_{0}^{+\infty}y^{-2\lambda_{\sigma}}\left[  -2\lambda_{\sigma}-x\left(
y-x\right)  \right]  \exp \left \{  -\frac{\left(  y-x\right)  ^{2}}{2}\right \}
dy\\
&  =\int_{-x}^{+\infty}\left(  t+x\right)  ^{-2\lambda_{\sigma}}\left(
-2\lambda_{\sigma}-xt\right)  \exp \left \{  -\frac{t^{2}}{2}\right \}  dt\text{,
}%
\end{align*}
where we use the substitution $t=y-x$ in the second equality.

We claim that
\begin{equation}
\lambda_{\sigma}>\frac{1}{2}\left(  x_{1}^{\lambda_{\sigma}}\right)
^{2}\text{.}\label{1001}%
\end{equation}
Otherwise $-2\lambda_{\sigma}-x_{1}^{\lambda_{\sigma}}\cdot \left(
-x_{1}^{\lambda_{\sigma}}\right)  \geq0$, then it is easy to check that
$\left(  t+x_{1}^{\lambda_{\sigma}}\right)  ^{-2\lambda_{\sigma}}\left(
-2\lambda_{\sigma}-x_{1}^{\lambda_{\sigma}}t\right)  \exp \left \{
-t^{2}/2\right \}  >0$ for $t\in \left(  -x_{1}^{\lambda_{\sigma}}%
,+\infty \right)  $ because $x_{1}^{\lambda_{\sigma}}<0$. Thus $\varphi_{\lambda_{\sigma}}^{\prime \prime}\left(
x_{1}^{\lambda_{\sigma}}\right)  >0$, which is contrary to the definition of
$x_{1}^{\lambda_{\sigma}}$ in Notation \ref{3.2}.

Lemma \ref{1801} shows that $z^{\lambda_{\sigma}}>1$. Then from (\ref{1001})
we obtain
\begin{equation}
\lambda_{\sigma}>\frac{\left(  x_{1}^{\lambda_{\sigma}}\right)  ^{2}}{2\left(
z^{\lambda_{\sigma}}\right)  ^{2}}\text{.}\label{1002}%
\end{equation}
It follows from Notation \ref{3.4} and Notation \ref{3.13} that%
\[
\frac{\left(  x_{1}^{\lambda_{\sigma}}\right)  ^{2}}{2\left(  z^{\lambda
_{\sigma}}\right)  ^{2}}=\frac{1}{2}\left(  -\frac{x_{1}^{\lambda_{\sigma}}%
}{z^{\lambda_{\sigma}}}\right)  ^{2}=\frac{\sigma_{\lambda_{\sigma}}^{2}}%
{2}=\frac{\sigma^{2}}{2}\text{,}%
\]
which implies $\lambda_{\sigma}>\sigma^{2}/2$ by (\ref{1002}).
\end{proof}

\section{The Application to $G$-capacities\label{s4}}

In this section, we will give the sharp order of $c_{\sigma}\left(  \left[
-\varepsilon,\varepsilon \right]  \right)  $ when $\varepsilon \rightarrow0$.

For each fixed $\sigma \in \left(  0,1\right)  $, by the definition of
$\lambda_{\sigma}$ in Notation \ref{3.13} and $\sigma_{\lambda}$ in Notation
\ref{3.4}, define%

\[
P_{\sigma}\left(  x\right)  :=H_{\lambda_{\sigma},\sigma}\left(  x\right)
=\left \{
\begin{array}
[c]{lll}%
\mu_{1}^{\lambda_{\sigma}}\varphi_{\lambda_{\sigma}}\left(  x\right)  , & \,
\, \, \, \, \, & x\leq x_{1}^{\lambda_{\sigma}},\\
\varphi_{\lambda_{\sigma}}\left(  \frac{x}{\sigma}\right)  +\varphi
_{\lambda_{\sigma}}\left(  -\frac{x}{\sigma}\right)  , &  & x_{1}%
^{\lambda_{\sigma}}<x<-x_{1}^{\lambda_{\sigma}},\\
\mu_{1}^{\lambda_{\sigma}}\varphi_{\lambda_{\sigma}}\left(  -x\right)  , &  &
x\geq-x_{1}^{\lambda_{\sigma}},
\end{array}
\right.
\]
where
\[
\mu_{1}^{\lambda_{\sigma}}=\left[  \varphi_{\lambda_{\sigma}}\left(
x_{1}^{\lambda_{\sigma}}\right)  \right]  ^{-1}\cdot \left[  \varphi
_{\lambda_{\sigma}}\left(  \frac{x_{1}^{\lambda_{\sigma}}}{\sigma}\right)
+\varphi_{\lambda_{\sigma}}\left(  -\frac{x_{1}^{\lambda_{\sigma}}}{\sigma
}\right)  \right]  >0\text{,}%
\]
$\varphi_{\lambda_{\sigma}}$, $H_{\lambda_{\sigma},\sigma}$ and $x_{1}^{\lambda_{\sigma}}$ are defined by (\ref{5333}), (\ref{58}) and Notation \ref{3.2} respectively. By Theorem
\ref{2112}, $P_{\sigma}$ is a positive $C^{2}$ solution to ODE\ (\ref{52}).
Now we give the properties of $P_{\sigma}\left( \cdot \right)$.

\begin{lemma}
\label{2116}Let $\sigma \in \left(  0,1\right)  $ be any fixed constant. Then
$P_{\sigma}$ is an even function, and $P_{\sigma}\left(  x\right)  $ is
strictly decreasing in $x\in \left(  0,+\infty \right)  $. Moreover,
\begin{equation}
P_{\sigma}\left(  x\right)  <\frac{\mu_{1}^{\sigma}}{2}P_{\sigma}\left(
0\right)  \exp \left \{  -\frac{x^{2}}{2}\right \}  \text{ \ for }x>-x_{1}%
^{\lambda}, \label{61801}%
\end{equation}
and
\[
\lim_{x\rightarrow+\infty}P_{\sigma}\left(  x\right)  =0.
\]

\end{lemma}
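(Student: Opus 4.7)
The plan is to verify the four claims---evenness, strict decrease on $(0,+\infty)$, the Gaussian upper bound, and the limit at $+\infty$---directly from the piecewise formula for $P_{\sigma}$, using the sign information for $\varphi_{\lambda_\sigma}'$ and $\varphi_{\lambda_\sigma}''$ supplied by Lemma \ref{2106} and the integral representations (\ref{5333})--(\ref{2401}). First I would observe that the evenness is essentially structural. By Notation \ref{3.4} and Notation \ref{3.13}, the inner breakpoint $\sigma z^{\lambda_\sigma}$ appearing in $H_{\lambda_\sigma,\sigma}$ coincides with $-x_1^{\lambda_\sigma}$, so $\varphi_{\lambda_\sigma}''(-\sigma z^{\lambda_\sigma})=\varphi_{\lambda_\sigma}''(x_1^{\lambda_\sigma})=0$. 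Formula (\ref{59}) then forces $\mu_2^{\lambda_\sigma}=0$, and hence the two outer pieces of $P_\sigma$ are exact reflections of each other, while the middle piece $\varphi_{\lambda_\sigma}(x/\sigma)+\varphi_{\lambda_\sigma}(-x/\sigma)$ is manifestly even.

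For strict decrease I would split at $x=-x_1^{\lambda_\sigma}$. On $(0,-x_1^{\lambda_\sigma})$ the argument in the proof of Theorem \ref{2104} yields $P_\sigma''<0$, so $P_\sigma$ is strictly concave on this interval; combined with $P_\sigma'(0)=0$ (from evenness and $C^2$-regularity), strict concavity forces $P_\sigma'<0$ there. On $[-x_1^{\lambda_\sigma},+\infty)$ we have $P_\sigma(x)=\mu_1^{\lambda_\sigma}\varphi_{\lambda_\sigma}(-x)$, and from (\ref{2401}) one reads off that $\varphi_{\lambda_\sigma}'(y)>0$ whenever $y<0$, since the integrand $y^{-2\lambda_\sigma}(y-x)e^{-(y-x)^2/2}$ is then strictly positive on $(0,+\infty)$. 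In this range $-x\le x_1^{\lambda_\sigma}<0$, so $P_\sigma'(x)=-\mu_1^{\lambda_\sigma}\varphi_{\lambda_\sigma}'(-x)<0$.

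For the Gaussian bound on $x>-x_1^{\lambda_\sigma}$, I would expand $(y+x)^2/2=y^2/2+xy+x^2/2$ inside the integral definition (\ref{5333}) of $\varphi_{\lambda_\sigma}(-x)$ to obtain
\[
\varphi_{\lambda_\sigma}(-x)=e^{-x^2/2}\int_{0}^{+\infty}y^{-2\lambda_\sigma}e^{-y^2/2}e^{-xy}\,dy<e^{-x^2/2}\varphi_{\lambda_\sigma}(0),
\]
since $e^{-xy}<1$ on $(0,+\infty)$ when $x>0$. Because $P_\sigma(0)=2\varphi_{\lambda_\sigma}(0)$, multiplying by $\mu_1^{\lambda_\sigma}$ recovers exactly (\ref{61801}), and $\lim_{x\to+\infty}P_\sigma(x)=0$ is an immediate consequence. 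No step of this outline poses a serious obstacle; the only item requiring minor care is the algebraic verification that $\mu_2^{\lambda_\sigma}=0$, which is what makes the definition of $P_\sigma$ collapse to the simple three-piece form used throughout the argument.
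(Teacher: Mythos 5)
Your proposal is correct and follows essentially the same route as the paper: split the monotonicity argument at $-x_{1}^{\lambda_{\sigma}}$ (concavity plus $P_{\sigma}'(0)=0$ on the inner piece), and obtain the Gaussian bound from $\left(y+x\right)^{2}\geq y^{2}+x^{2}$ together with $\varphi_{\lambda_{\sigma}}(0)=P_{\sigma}(0)/2$. The only real difference is on $[-x_{1}^{\lambda_{\sigma}},+\infty)$, where you read off $P_{\sigma}'(x)=-\mu_{1}^{\lambda_{\sigma}}\varphi_{\lambda_{\sigma}}'(-x)<0$ directly from the sign of the integrand in (\ref{2401}), while the paper instead uses the ODE identity $P_{\sigma}'(x)=\left(-2\lambda P_{\sigma}(x)-P_{\sigma}''(x)\right)/x$ together with $P_{\sigma}''\geq 0$ there; both are valid, and your verification that $\mu_{2}^{\lambda_{\sigma}}=0$ (via $\sigma z^{\lambda_{\sigma}}=-x_{1}^{\lambda_{\sigma}}$) correctly justifies the three-piece form of $P_{\sigma}$ that the paper simply writes down.
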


\begin{proof}
It is clear that $P_{\sigma}$ is an even function.

By Lemma \ref{2107} we know
that $P_{\sigma}^{\prime \prime}\left(  x\right)  <0$ for $x\in \left[
0,-x_{1}^{\lambda_{\sigma}}\right)  $, and it is clear that $P_{\sigma
}^{\prime}\left(  0\right)  =0$. Then $P_{\sigma}^{\prime}\left(  x\right)
<0$ for$\ x\in \left(  0,-x_{1}^{\lambda_{\sigma}}\right)  $, so $P_{\sigma}$
is strictly decreasing in $x\in \left(  0,-x_{1}^{\lambda_{\sigma}}\right)  $.
By Lemma \ref{2106}, we know that $P_{\sigma}^{\prime \prime}\left(  x\right)
\geq0$ for $x\in \left[  -x_{1}^{\lambda_{\sigma}},+\infty \right)  $. Since
$P_{\sigma}$ is the solution to ODE\ (\ref{52}), for $x\in \left[
-x_{1}^{\lambda_{\sigma}},+\infty \right)  \subseteq \left(  0,+\infty \right)
$, we have
\[
P_{\sigma}^{\prime}\left(  x\right)  =\frac{-2\lambda P_{\sigma}\left(
x\right)  -P_{\sigma}^{\prime \prime}\left(  x\right)  }{x}<0\text{.}%
\]
Therefore, $P_{\sigma}$ is strictly decreasing in $x\in \left[  -x_{1}%
^{\lambda_{\sigma}},+\infty \right)  $.

One can easily check that for $x\in \left(  -x_{1}^{\lambda_{\sigma}}%
,+\infty \right)  $,
\[%
\begin{array}
[c]{rl}%
P_{\sigma}\left(  x\right)  = & \mu_{1}^{\lambda_{\sigma}}\int_{0}^{+\infty
}y^{-2\lambda_{\sigma}}\exp \left \{  -\frac{\left(  y+x\right)  ^{2}}%
{2}\right \}  dy\\
< & \mu_{1}^{\lambda_{\sigma}}\exp \left \{  -\frac{x^{2}}{2}\right \}  \int
_{0}^{+\infty}y^{-2\lambda_{\sigma}}\exp \left \{  -\frac{y^{2}}{2}\right \}
dy\\
= & \frac{\mu_{1}^{\sigma}}{2}P_{\sigma}\left(  0\right)  \exp \left \{
-\frac{x^{2}}{2}\right \}  .
\end{array}
\]
It follows that $\lim_{x\rightarrow+\infty}P_{\sigma}\left(  x\right)  =0$.
\end{proof}

Now we state the main result of this section.

\begin{theorem}
\label{2117}Let $\sigma \in \left(  0,1\right)  $ be any fixed constant, and let $\lambda_{\sigma}$ be defined by Notation \ref{3.13}. Then
for every $\varepsilon \in \left(  0,1\right)  $,
\begin{equation}
c_{\sigma}\left(  \left[  -\varepsilon,\varepsilon \right]  \right)  \leq
\frac{P_{\sigma}\left(  0\right)  }{P_{\sigma}\left(  1\right)  }%
\varepsilon^{2\lambda_{\sigma}}\text{,} \label{2306}%
\end{equation}
and
\begin{equation}
c_{\sigma}\left(  \left[  -\varepsilon \sqrt{2\ln \left(  r_{\sigma}%
\varepsilon^{-2\lambda_{\sigma}}\right)  },\varepsilon \sqrt{2\ln \left(
r_{\sigma}\varepsilon^{-2\lambda_{\sigma}}\right)  }\right]  \right)
\geq2^{-\lambda_{\sigma}-1}\varepsilon^{2\lambda_{\sigma}}\text{,}
\label{2307}%
\end{equation}
where $r_{\sigma}=4\left[  2\vee \left(  {\mu_{1}^{\lambda_{\sigma}}}%
/{2}\right)  \right]  $.
\end{theorem}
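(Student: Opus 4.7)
The strategy is to bracket the indicator $I_{[-\varepsilon,\varepsilon]}$ (and later $I_{[-B,B]}$ for a carefully chosen $B$) between two explicit $C_{b,lip}$ functions built from $P_\sigma$, then read off the bounds on $c_\sigma$ from Remark \ref{62108}. The key computational input is that, by Theorem \ref{2113} applied with $\lambda = \lambda_\sigma$, the function $u^{P_\sigma}(t,x)=(1+t)^{-\lambda_\sigma}P_\sigma(x/\sqrt{1+t})$ solves the $G$-heat equation with initial data $P_\sigma$. Combined with the scaling identity (\ref{62102}), this yields the clean formula
\[
\mathbb{\hat{E}}_\sigma[P_\sigma(\,\cdot\,/\varepsilon)] \;=\; \mathbb{\hat{E}}_\sigma^{1/\varepsilon^2}[P_\sigma] \;=\; u^{P_\sigma}\!\bigl(1/\varepsilon^2,\,0\bigr) \;=\; \bigl(1+\varepsilon^{-2}\bigr)^{-\lambda_\sigma}P_\sigma(0),
\]
which for $\varepsilon\in(0,1)$ is sandwiched between $2^{-\lambda_\sigma}\varepsilon^{2\lambda_\sigma}P_\sigma(0)$ and $\varepsilon^{2\lambda_\sigma}P_\sigma(0)$. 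These two elementary bounds are what convert the intrinsic factor $(1+\varepsilon^{-2})^{-\lambda_\sigma}$ into the power $\varepsilon^{2\lambda_\sigma}$ that appears in the statement.

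For the upper bound (\ref{2306}), I take $\psi_2(x)=P_\sigma(x/\varepsilon)/P_\sigma(1)$. Since $P_\sigma$ is even and strictly decreasing on $(0,\infty)$ by Lemma \ref{2116}, for $|x|\leq\varepsilon$ one has $P_\sigma(x/\varepsilon)\geq P_\sigma(1)$, so $\psi_2\geq I_{[-\varepsilon,\varepsilon]}$. Positive homogeneity of $\mathbb{\hat{E}}_\sigma$ together with the identity above then gives $\mathbb{\hat{E}}_\sigma[\psi_2] = (1+\varepsilon^{-2})^{-\lambda_\sigma}P_\sigma(0)/P_\sigma(1) \leq \varepsilon^{2\lambda_\sigma} P_\sigma(0)/P_\sigma(1)$, and Remark \ref{62108} finishes the argument.

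For the lower bound (\ref{2307}), write $B := \varepsilon\sqrt{2\ln(r_\sigma\varepsilon^{-2\lambda_\sigma})}$ and take
\[
\psi_1(x) \;:=\; \frac{P_\sigma(x/\varepsilon) - P_\sigma(B/\varepsilon)}{P_\sigma(0)-P_\sigma(B/\varepsilon)}.
\]
By the monotonicity and evenness of $P_\sigma$, this satisfies $\psi_1\leq I_{[-B,B]}$ (with value $\leq 1$ attained at $0$ and being $\leq 0$ outside $[-B,B]$). Using positive homogeneity and cash translatability (properties (d) and (b)) I can pull the constant $P_\sigma(B/\varepsilon)$ through $\mathbb{\hat{E}}_\sigma$ to obtain
\[
\mathbb{\hat{E}}_\sigma[\psi_1] \;=\; \frac{(1+\varepsilon^{-2})^{-\lambda_\sigma}P_\sigma(0)-P_\sigma(B/\varepsilon)}{P_\sigma(0)-P_\sigma(B/\varepsilon)},
\]
and Remark \ref{62108} gives $c_\sigma([-B,B])\geq\mathbb{\hat{E}}_\sigma[\psi_1]$.

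The main obstacle is calibrating $B$ so that the numerator above stays at least half of $(1+\varepsilon^{-2})^{-\lambda_\sigma}P_\sigma(0)$, i.e.\ so that $P_\sigma(B/\varepsilon)\leq \tfrac12(1+\varepsilon^{-2})^{-\lambda_\sigma}P_\sigma(0)$; if this holds, then the bound $\mathbb{\hat{E}}_\sigma[\psi_1]\geq \tfrac12(1+\varepsilon^{-2})^{-\lambda_\sigma}\geq 2^{-\lambda_\sigma-1}\varepsilon^{2\lambda_\sigma}$ falls out. To arrange this I invoke the Gaussian-type decay estimate $P_\sigma(x)<\tfrac{\mu_1^{\lambda_\sigma}}{2}P_\sigma(0)e^{-x^2/2}$ (valid for $x>-x_1^{\lambda_\sigma}$) from Lemma \ref{2116}, so that taking logs the desired inequality becomes $B\geq \varepsilon\sqrt{2\ln(\mu_1^{\lambda_\sigma}2^{\lambda_\sigma}\varepsilon^{-2\lambda_\sigma})}$. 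The explicit constant $r_\sigma = 4[2\vee(\mu_1^{\lambda_\sigma}/2)]$ is designed precisely so that $r_\sigma\geq 2\mu_1^{\lambda_\sigma}\geq \mu_1^{\lambda_\sigma}2^{\lambda_\sigma}$ (using $\lambda_\sigma<1/2$), making the specified $B$ satisfy the condition; the auxiliary requirements $r_\sigma\varepsilon^{-2\lambda_\sigma}\geq 8$ (so that the logarithm is positive) and $B/\varepsilon>-x_1^{\lambda_\sigma}$ (needed to apply the tail bound, and automatic because $-x_1^{\lambda_\sigma}<1<\sqrt{2\ln 8}$) are then routine to verify.
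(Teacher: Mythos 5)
Your proof is correct and follows essentially the same route as the paper: bracket the indicator above and below by affine transformations of $P_{\sigma}$, evaluate $\mathbb{\hat{E}}_{\sigma}^{1/\varepsilon^{2}}\left[  P_{\sigma}\right]  =\left(  1+\varepsilon^{-2}\right)  ^{-\lambda_{\sigma}}P_{\sigma}\left(  0\right)  $ via the self-similar solution of Theorem \ref{2113}, and control the interval width with the Gaussian tail bound (\ref{61801}). The only (harmless, arguably cleaner) difference is that the paper introduces the level set $l_{a}$ with $P_{\sigma}\left(  l_{a}\right)  =a=\frac{1}{2}\left(  1+\varepsilon^{-2}\right)  ^{-\lambda_{\sigma}}P_{\sigma}\left(  0\right)  $ and then bounds $l_{a}<\sqrt{2\ln \left(  r_{\sigma}\varepsilon^{-2\lambda_{\sigma}}\right)  }$, whereas you plug the endpoint $B$ in directly and verify $P_{\sigma}\left(  B/\varepsilon \right)  \leq a$.
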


\begin{proof}
By Lemma \ref{2116}, we have $I_{\left[  -1,1\right]  }\left(  x\right)  \leq
P_{\sigma}\left(  x\right)  /P_{\sigma}\left(  1\right)  $. According to
(\ref{62103}) and Definition \ref{62104}, we obtain that%
\begin{equation}
c_{\sigma}\left(  \left[  -\varepsilon,\varepsilon \right]  \right)
=c_{\sigma}^{\frac{1}{\varepsilon^{2}}}\left(  \left[  -1,1\right]  \right)
\leq \mathbb{\hat{E}}_{\sigma}^{\frac{1}{\varepsilon^{2}}}\left[  \frac
{1}{P_{\sigma}\left(  1\right)  }P_{\sigma}\left(  \cdot \right)  \right]
.\label{62106}%
\end{equation}
Then, by the positive homogeneity of $\mathbb{\hat{E}}_{\sigma}^{t}$ and
Definition \ref{62105}, we have
\begin{equation}
\mathbb{\hat{E}}_{\sigma}^{\frac{1}{\varepsilon^{2}}}\left[  \frac
{1}{P_{\sigma}\left(  1\right)  }P_{\sigma}\left(  \cdot \right)  \right]
=\frac{1}{P_{\sigma}\left(  1\right)  }\mathbb{\hat{E}}_{\sigma}^{\frac
{1}{\varepsilon^{2}}}\left[  P_{\sigma}\left(  \cdot \right)  \right]
=\frac{1}{P_{\sigma}\left(  1\right)  }u^{P_{\sigma}\left(  \cdot \right)
}\left(  \frac{1}{\varepsilon^{2}},0\right)  .
\label{3003}%
\end{equation}
It follows by Theorem \ref{2113} that
\begin{equation}
u^{P_{\sigma}\left(  \cdot \right)  }\left(  \frac{1}{\varepsilon^{2}%
},0\right)  =\left(  1+\frac{1}{\varepsilon^{2}}\right)  ^{-\lambda_{\sigma}%
}P_{\sigma}\left(  0\right)  \leq \varepsilon^{2\lambda_{\sigma}}P_{\sigma
}\left(  0\right)  .\label{62107}%
\end{equation}
Thus we get (\ref{2306}) by (\ref{62106})-(\ref{62107}).

Set
\[
a=\frac{1}{2}\left(  1+\frac{1}{\varepsilon^{2}}\right)  ^{-\lambda_{\sigma}%
}P_{\sigma}\left(  0\right)  .
\]
It is clear that $a\rightarrow0$ when $\varepsilon \rightarrow0$, and
$a\in \left(  0,P_{\sigma}\left(  0\right)  \right)  $. From Lemma \ref{2116}
we know that there exists a unique $l_{a}>0$ such that $P_{\sigma}\left(
l_{a}\right)  =P_{\sigma}\left(  -l_{a}\right)  =a$, and $I_{\left[
-l_{a},l_{a}\right]  }\left(  x\right)  \geq \left[  P_{\sigma}\left(
x\right)  -a\right]  /\left[  P_{\sigma}\left(  0\right)  -a\right]  $.
According to (\ref{62103}) and Remark \ref{62108}, we have%
\begin{equation}
c_{\sigma}\left(  \left[  -\varepsilon l_{a},\varepsilon l_{a}\right]
\right)  =c_{\sigma}^{\frac{1}{\varepsilon^{2}}}\left(  \left[  -l_{a}%
,l_{a}\right]  \right)  \geq \mathbb{\hat{E}}_{\sigma}^{\frac{1}{\varepsilon
^{2}}}\left[  \frac{P_{\sigma}\left(  \cdot \right)  -a}{P_{\sigma}\left(
0\right)  -a}\right]  .\label{62110}%
\end{equation}
Then, by the properties of $\mathbb{\hat{E}}_{\sigma}^{t}$ and Definition
\ref{62105}, we get that%
\begin{equation}
\mathbb{\hat{E}}_{\sigma}^{\frac{1}{\varepsilon^{2}}}\left[  \frac{P_{\sigma
}\left(  \cdot \right)  -a}{P_{\sigma}\left(  0\right)  -a}\right]  =\frac
{1}{P_{\sigma}\left(  0\right)  -a}\left(  \mathbb{\hat{E}}_{\sigma}^{\frac
{1}{\varepsilon^{2}}}\left[  P_{\sigma}\left(  \cdot \right)  \right]
-a\right)  =\frac{1}{P_{\sigma}\left(  0\right)  -a}\left(  u^{P_{\sigma
}\left(  \cdot \right)  }\left(  \frac{1}{\varepsilon^{2}},0\right)  -a\right)
.\label{62109}%
\end{equation}
Substituting $a$ into (\ref{62109}), by Theorem \ref{2113}, we can easily
check that%
\begin{equation}
\frac{1}{P_{\sigma}\left(  0\right)  -a}\left(  u^{P_{\sigma}\left(
\cdot \right)  }\left(  \frac{1}{\varepsilon^{2}},0\right)  -a\right)
=\frac{\varepsilon^{2\lambda_{\sigma}}}{2\left(  \varepsilon^{2}+1\right)
^{\lambda_{\sigma}}-\varepsilon^{2\lambda_{\sigma}}}\geq \frac{\varepsilon
^{2\lambda_{\sigma}}}{2\left(  \varepsilon^{2}+1\right)  ^{\lambda_{\sigma}}%
}\geq2^{-\lambda_{\sigma}-1}\varepsilon^{2\lambda_{\sigma}}.\label{62111}%
\end{equation}
By (\ref{62110})-(\ref{62111}), we conclude that%
\begin{equation}
c_{\sigma}\left(  \left[  -\varepsilon l_{a},\varepsilon l_{a}\right]
\right)  \geq2^{-\lambda_{\sigma}-1}\varepsilon^{2\lambda_{\sigma}%
}.\label{62113}%
\end{equation}

We now give an estimation of $l_{a}$ with respect to $\sigma$ and
$\varepsilon$. Let $\delta=P_{\sigma}\left(  0\right)  \left[  2\vee \left(
\mu_{1}^{\lambda_{\sigma}}/2\right)  \right]  $. Since $a\in \left(
0,P_{\sigma}\left(  0\right)  \right)  $, we can easily check that there
exists $x_{a}>1$ such $\exp \left \{  -x_{a}^{2}/2\right \}  \delta=a$. Then
simple computation shows that%
\begin{equation}
1<x_{a}=\sqrt{-2\ln \left[  \frac{P_{\sigma}\left(  0\right)  }{2\delta}\left(
\frac{1+\varepsilon^{2}}{\varepsilon^{2}}\right)  ^{-\lambda_{\sigma}}\right]
}<\sqrt{2\ln \left(  r_{\sigma}\varepsilon^{-2\lambda_{\sigma}}\right)
},\label{62112}%
\end{equation}
where $r_{\sigma}=4\left[  2\vee \left(  \mu_{1}^{\lambda_{\sigma}}/2\right)
\right]  $. From (\ref{61801}), we get that
\[
P_{\sigma}\left(  x_{a}\right)  <\frac{\mu_{1}^{\sigma}}{2}P_{\sigma}\left(
0\right)  \exp \left \{  -\frac{x_{a}^{2}}{2}\right \}  \leq \exp \left \{
-\frac{x_{a}^{2}}{2}\right \}  \delta=a=P_{\sigma}\left(  l_{a}\right)  .
\]
It follows that $l_{a}<x_{a}$ by Lemma \ref{2116}. Then we have $l_{a}%
<\sqrt{2\ln \left(  r_{\sigma}\varepsilon^{-2\lambda_{\sigma}}\right)  }$ by
(\ref{62112}), which implies (\ref{2307}) by (\ref{62113}).
\end{proof}

\begin{corollary}
\label{62114}For $\lambda \in \left(  \lambda_{\sigma},\frac{1}{2}\right)  $,
\[
\liminf \limits_{\varepsilon \rightarrow0}\frac{c_{\sigma}\left(  \left[
-\varepsilon,\varepsilon \right]  \right)  }{\varepsilon^{2\lambda}}%
=+\infty \text{.}%
\]

\end{corollary}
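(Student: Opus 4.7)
The plan is to derive the corollary directly from the lower bound \eqref{2307} in Theorem \ref{2117} by a change of variables, essentially by inverting the map $\varepsilon \mapsto \varepsilon' := \varepsilon\sqrt{2\ln(r_\sigma \varepsilon^{-2\lambda_\sigma})}$. Since this map is continuous and strictly increasing on a right neighborhood of $0$ and tends to $0$ as $\varepsilon\to 0$, it is a bijection between $(0,\varepsilon_0]$ and $(0,\varepsilon_0']$ for some small $\varepsilon_0, \varepsilon_0'>0$. Hence every sufficiently small $\tilde\varepsilon>0$ can be written uniquely as $\tilde\varepsilon = \varepsilon\sqrt{2\ln(r_\sigma \varepsilon^{-2\lambda_\sigma})}$ for a corresponding $\varepsilon=\varepsilon(\tilde\varepsilon)$, with $\varepsilon(\tilde\varepsilon)\to 0$ as $\tilde\varepsilon\to 0$.

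With this identification, the inequality \eqref{2307} becomes $c_\sigma([-\tilde\varepsilon,\tilde\varepsilon])\geq 2^{-\lambda_\sigma-1}\varepsilon^{2\lambda_\sigma}$. Dividing by $\tilde\varepsilon^{2\lambda}=\varepsilon^{2\lambda}\bigl(2\ln(r_\sigma \varepsilon^{-2\lambda_\sigma})\bigr)^{\lambda}$ gives
\[
\frac{c_\sigma([-\tilde\varepsilon,\tilde\varepsilon])}{\tilde\varepsilon^{2\lambda}}\;\geq\;\frac{2^{-\lambda_\sigma-1}}{\varepsilon^{2(\lambda-\lambda_\sigma)}\bigl(2\ln(r_\sigma\varepsilon^{-2\lambda_\sigma})\bigr)^{\lambda}}.
\]
Since $\lambda-\lambda_\sigma>0$ by hypothesis, the denominator is a polynomial of positive degree in $\varepsilon$ multiplied by a polylogarithmic factor, so it tends to $0$ as $\varepsilon\to 0$. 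Thus the right-hand side tends to $+\infty$, which yields the claim along the particular sequence $\tilde\varepsilon = \varepsilon'$.

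To promote this to $\liminf_{\tilde\varepsilon\to 0}$ over all sequences, I would appeal to the monotonicity of $c_\sigma$ in the interval: if $[a,b]\subseteq [c,d]$, then $I_{[a,b]}\leq I_{[c,d]}$ and Remark \ref{62108} together with the monotonicity of $\hat{\mathbb{E}}_\sigma^t$ give $c_\sigma([a,b])\leq c_\sigma([c,d])$. Hence for any $\tilde\varepsilon>0$ small, choosing the unique $\varepsilon$ with $\varepsilon'=\tilde\varepsilon$ (or even any $\varepsilon$ with $\varepsilon'\leq\tilde\varepsilon$) yields $c_\sigma([-\tilde\varepsilon,\tilde\varepsilon])\geq c_\sigma([-\varepsilon',\varepsilon'])\geq 2^{-\lambda_\sigma-1}\varepsilon^{2\lambda_\sigma}$, and the previous estimate applies.

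There is no genuine obstacle beyond carefully tracking the implicit substitution: all the substantive work has already been done in Theorem \ref{2117}. The only minor point requiring care is verifying that the map $\varepsilon\mapsto \varepsilon'$ is eventually monotone so that the inversion is well-defined, which follows by elementary calculus from the fact that $\sqrt{2\ln(r_\sigma\varepsilon^{-2\lambda_\sigma})}$ grows only polylogarithmically in $1/\varepsilon$, dominated by the factor $\varepsilon$.
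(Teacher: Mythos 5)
Your proposal is correct and follows essentially the same route as the paper: both prove the claim by substituting $\varepsilon'=\varepsilon\sqrt{2\ln\left(r_{\sigma}\varepsilon^{-2\lambda_{\sigma}}\right)}$ into the lower bound (\ref{2307}) and observing that $\varepsilon^{2(\lambda-\lambda_{\sigma})}\left[2\ln\left(r_{\sigma}\varepsilon^{-2\lambda_{\sigma}}\right)\right]^{\lambda}\rightarrow 0$ since $\lambda>\lambda_{\sigma}$. Your extra care in checking that the map $\varepsilon\mapsto\varepsilon'$ is eventually increasing and invertible (so the $\liminf$ over $\varepsilon'$ is genuinely captured) only makes explicit what the paper's proof leaves implicit.
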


\begin{proof}
Set $\varepsilon^{\prime}=\varepsilon \sqrt{2\ln \left(  r_{\sigma}%
\varepsilon^{-2\lambda_{\sigma}}\right)  }$. It is clear that $\varepsilon
^{\prime}>\varepsilon$ and $\varepsilon^{\prime}\rightarrow0$ when
$\varepsilon \rightarrow0$. For every $\varepsilon^{\prime}\in \left(
0,1\right)  $, from (\ref{2307}), we can easily check that
\begin{align*}
\liminf \limits_{\varepsilon^{\prime}\rightarrow0}\frac{c_{\sigma}\left(
\left[  -\varepsilon^{\prime},\varepsilon^{\prime}\right]  \right)  }{\left(
\varepsilon^{\prime}\right)  ^{2\lambda}} &  =\liminf \limits_{\varepsilon
^{\prime}\rightarrow0}\frac{c_{\sigma}\left(  \left[  -\varepsilon^{\prime
},\varepsilon^{\prime}\right]  \right)  }{\varepsilon^{2\lambda_{\sigma}%
}\varepsilon^{2\left(  \lambda-\lambda_{\sigma}\right)  }\left[  2\ln \left(
r_{\sigma}\varepsilon^{-2\lambda_{\sigma}}\right)  \right]  ^{\lambda}}\\
&  \geq \liminf \limits_{\varepsilon \rightarrow0}\frac{2^{-\lambda_{\sigma}-1}%
}{\varepsilon^{2\left(  \lambda-\lambda_{\sigma}\right)  }\left[  2\ln \left(
r_{\sigma}\varepsilon^{-2\lambda_{\sigma}}\right)  \right]  ^{\lambda}}\\
&  =+\infty \text{,}%
\end{align*}
which completes the proof.
\end{proof}

\begin{remark}
For any $m\in \left(  0,+\infty \right)  $ and each fixed $t>m^{2}$, by
(\ref{62103}) and Theorem \ref{2117}, we have
\[
c_{\sigma}^{t}\left(  \left[  -m,m\right]  \right)  =c_{\sigma}\left(  \left[
-\frac{m}{\sqrt{t}},\frac{m}{\sqrt{t}}\right]  \right)  \leq \frac{P_{\sigma
}\left(  0\right)  }{P_{\sigma}\left(  1\right)  }\left(  \frac{m}{\sqrt{t}%
}\right)  ^{2\lambda_{\sigma}}=\frac{P_{\sigma}\left(  0\right)
m^{2\lambda_{\sigma}}}{P_{\sigma}\left(  1\right)  }t^{-\lambda_{\sigma}%
}\text{.}%
\]

\end{remark}

Now we show that $\lambda_{\sigma}$ is the maximal value such that ODE
(\ref{52}) and $G$-heat equation (\ref{50}) satisfying (\ref{54}) have
positive solutions.

\begin{proposition}
\label{2118}Let $\sigma \in \left(  0,1\right)  $ be any fixed constant. Then
for every $\lambda \in \left(  \lambda_{\sigma},\frac{1}{2}\right)  $, there is
no positive solutions to ODE (\ref{52}) and $G$-heat equation (\ref{50}) with
initial condition $u\left(  0,x\right)  =H\left(  x\right)  $ satisfying
(\ref{54}).
\end{proposition}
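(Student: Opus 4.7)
The plan is to argue by contradiction, converting a hypothetical positive $C^{2}$ solution $H$ of ODE (\ref{52}) at some $\lambda\in(\lambda_{\sigma},\tfrac{1}{2})$ into an upper bound on $c_{\sigma}([-\varepsilon,\varepsilon])$ that conflicts with Corollary \ref{62114}. By Theorem \ref{2102}, existence of a positive $C^{2}$ solution to ODE (\ref{52}) is equivalent to existence of a positive solution $u^{H}(t,x)=(1+t)^{-\lambda}H(x/\sqrt{1+t})$ of the $G$-heat equation (\ref{50}) of the form (\ref{54}); hence it suffices to rule out the ODE case.

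To use $H$ as a competitor in the infimum defining $c_{\sigma}$, set $m:=\min_{x\in[-1,1]}H(x)>0$, which is well-defined by positivity and continuity of $H$. Then $H/m\geq I_{[-1,1]}$ pointwise, but $H/m$ need not lie in $C_{b,lip}(\mathbb{R})$, so I multiply by a smooth cutoff: fix $\chi\in C^{\infty}(\mathbb{R})$ with $0\leq\chi\leq 1$, $\chi\equiv 1$ on $[-1,1]$ and $\chi\equiv 0$ off $[-2,2]$, and set $\psi:=(H/m)\chi$. Since $H\in C^{2}$ and $\chi$ is smooth and compactly supported, $\psi$ is $C^{2}$ with compact support, so $\psi\in C_{b,lip}(\mathbb{R})$; moreover $\psi\geq I_{[-1,1]}$ and $0\leq\psi\leq H/m$ on $\mathbb{R}$.

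By Remark \ref{62108} and the scaling identity (\ref{62103}),
\[
c_{\sigma}\bigl([-\varepsilon,\varepsilon]\bigr)=c_{\sigma}^{1/\varepsilon^{2}}\bigl([-1,1]\bigr)\leq\mathbb{\hat{E}}_{\sigma}^{1/\varepsilon^{2}}[\psi]=u^{\psi}\bigl(1/\varepsilon^{2},0\bigr).
\]
Now $u^{\psi}$ is a bounded viscosity solution of the $G$-heat equation with $u^{\psi}(0,\cdot)=\psi\leq H/m=u^{H/m}(0,\cdot)$, while $u^{H/m}(t,x):=(1+t)^{-\lambda}H(x/\sqrt{1+t})/m$ is a positive classical, hence viscosity, solution of the same equation. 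The comparison principle for the uniformly parabolic $G$-heat equation (see Remark \ref{2101}) then gives $u^{\psi}(t,x)\leq u^{H/m}(t,x)$ on $[0,\infty)\times\mathbb{R}$, so
\[
c_{\sigma}\bigl([-\varepsilon,\varepsilon]\bigr)\leq\frac{H(0)}{m}\Bigl(1+\tfrac{1}{\varepsilon^{2}}\Bigr)^{-\lambda}\leq\frac{H(0)}{m}\varepsilon^{2\lambda}\qquad\text{for every }\varepsilon\in(0,1).
\]
Therefore $\limsup_{\varepsilon\to 0}c_{\sigma}([-\varepsilon,\varepsilon])/\varepsilon^{2\lambda}\leq H(0)/m<\infty$, in direct contradiction with Corollary \ref{62114}.

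The only non-routine step is the comparison $u^{\psi}\leq u^{H/m}$: $u^{\psi}$ is bounded (its initial datum $\psi$ is bounded), but $H$ is not a priori globally bounded, so $u^{H/m}$ may be an unbounded classical solution and the standard $C_{b,lip}$ comparison principle does not apply verbatim. The cleanest way to close this gap is to compare $u^{\psi}$ and $u^{H/m}$ on the bounded cylinders $[0,T]\times[-R,R]$, using the initial inequality $u^{\psi}(0,\cdot)\leq u^{H/m}(0,\cdot)$ together with the nonnegativity of $u^{H/m}$ on the lateral boundary, and then letting $R\to\infty$; this is a standard argument for uniformly parabolic fully nonlinear equations and is the only place where genuine work beyond the machinery already assembled in the paper is required.
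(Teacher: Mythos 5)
Your proposal follows essentially the same route as the paper: assume a positive $C^{2}$ solution of ODE (\ref{52}) exists for some $\lambda>\lambda_{\sigma}$, use it (suitably normalized) as a competitor in the infimum defining the capacity to get $c_{\sigma}\left(\left[-\varepsilon,\varepsilon\right]\right)\leq C\varepsilon^{2\lambda}$, and contradict Corollary \ref{62114}; the paper compresses all of this into ``analysis similar to the proof of Theorem \ref{2117}''. You are in fact more careful than the paper on the admissibility point, which is real: Definition \ref{62104} requires competitors in $C_{b,lip}\left(\mathbb{R}\right)$, and a general positive solution of ODE (\ref{52}) is not obviously bounded or Lipschitz. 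However, the one step you flag as non-routine is also the one place your sketch does not close as written: on the lateral boundary $\left\{\left\vert x\right\vert =R\right\}$ of the cylinder you need $u^{\psi}\leq u^{H/m}$, and nonnegativity of $u^{H/m}$ there does not give this, since $u^{\psi}\geq0$ as well. Two standard repairs are available: either show $u^{\psi}\left(t,x\right)\rightarrow0$ as $\left\vert x\right\vert\rightarrow\infty$ uniformly on $\left[0,T\right]$ via an exponential barrier (possible because $\psi$ is bounded with compact support and $G\left(a\right)\leq\frac{1}{2}a^{+}$), or, closer to the paper's intent, argue a priori that any positive $C^{2}$ solution of ODE (\ref{52}) is eventually monotone --- since $xy^{\prime}+2\lambda y>0$ forces $y^{\prime\prime}<0$ --- and hence bounded with bounded derivative, so that $K\tilde{P}$ itself lies in $C_{b,lip}\left(\mathbb{R}\right)$ and Remark \ref{62108} applies directly. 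With either repair your argument is complete and coincides in substance with the paper's.
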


\begin{proof}
To obtain a contradiction, we suppose that there exists a $\lambda
>\lambda_{\sigma}$ such that ODE\ (\ref{52}) has a positive $C^{2}$ solution
$\tilde{P}$. Then there exists a constant$\ K>0$ such that
\[
I_{\left[  -1,1\right]  }\left(  x\right)  \leq K\tilde{P}\left(  x\right)
\text{.}%
\]
for $x\in%
\mathbb{R}
$. Analysis similar to that in the proof of Theorem \ref{2117} shows that%
\begin{equation}
\limsup_{\varepsilon \rightarrow0}\frac{c_{\sigma}\left(  \left[
-\varepsilon,\varepsilon \right]  \right)  }{\varepsilon^{2\lambda}}<+\infty,
\label{1212}%
\end{equation}
which contradicts Corollary \ref{62114}.

Therefore, by Theorem \ref{2102}
and Remark \ref{2101}, there is no positive solutions to $G$-heat equation
(\ref{50}) with initial condition $u\left(  0,x\right)  =H\left(  x\right)  $ satisfying
(\ref{54}).
\end{proof}

\section{Technical Proofs\label{s5}}

We first give some equalities which are useful in the following proofs.

For each $\mu_{1},\mu_{2}\in%
\mathbb{R}
$, the general solution $\Psi_{\lambda}\left(  \cdot \right)  $\ in
(\ref{2301}) to ODE (\ref{55}) satisfies%
\begin{equation}
\Psi_{\lambda}^{\prime \prime}\left(  x\right)  =-2\lambda \Psi_{\lambda}\left(
x\right)  -x\Psi_{\lambda}^{\prime}\left(  x\right)  \text{.}\label{1}%
\end{equation}
Taking the derivation of above equation, we obtain
\begin{equation}
\Psi_{\lambda}^{\prime \prime \prime}\left(  x\right)  =\left(  -2\lambda
-1\right)  \Psi_{\lambda}^{\prime}\left(  x\right)  -x\Psi_{\lambda}%
^{\prime \prime}\left(  x\right)  \text{.}\label{10}%
\end{equation}
Combining (\ref{1}) and (\ref{10}), we have%

\begin{equation}
\Psi_{\lambda}^{\prime \prime \prime}\left(  x\right)  =\left(  x^{2}%
-2\lambda-1\right)  \Psi_{\lambda}^{\prime}\left(  x\right)  +2\lambda
x\Psi_{\lambda}\left(  x\right)  \label{9}%
\end{equation}
and%

\begin{equation}
\Psi_{\lambda}^{\prime \prime \prime}\left(  x\right)  =\frac{2\lambda \left(
2\lambda+1\right)  }{x}\Psi_{\lambda}\left(  x\right)  -\frac{x^{2}%
-2\lambda-1}{x}\Psi_{\lambda}^{\prime \prime}\left(  x\right)  \text{.}
\label{8}%
\end{equation}
Obviously, $\varphi_{\lambda}\left(  \cdot \right)  $ defined by (\ref{5333})
also satisfies (\ref{1})-(\ref{8}).

\subsection{\textbf{Proof of Lemma \ref{2106}\label{0801}}}

The proof is divided into three steps.

\textbf{Step 1:} We first show that there exists $x_{1}\in \left(  -1,0\right)
$ and $x_{2}\in \left(  1,+\infty \right)  $ such that $\varphi_{\lambda
}^{\prime \prime}\left(  x_{1}\right)  =\varphi_{\lambda}^{\prime \prime}\left(
x_{2}\right)  =0$.

Since $\left(  y+1\right)  ^{2}-1$ is positive for $y\in \left(  0,+\infty
\right)  $, by (\ref{2402}), it is easily seen that
\begin{equation}
\varphi_{\lambda}^{\prime \prime}\left(  -1\right)  =\int_{0}^{+\infty
}y^{-2\lambda}\left[  \left(  y+1\right)  ^{2}-1\right]  \exp \left \{
-\frac{y^{2}}{2}\right \}  dy>0\text{.}\label{1802}%
\end{equation}
From (\ref{3}) we have
\[
\varphi_{\lambda}^{\prime \prime}\left(  0\right)  =-2\lambda \int_{0}^{+\infty
}y^{-2\lambda}\exp \left \{  -\frac{y^{2}}{2}\right \}  dy<0\text{.}%
\]
So there exists a $x_{1}\in \left(  -1,0\right)  $ such that $\varphi_{\lambda
}^{\prime \prime}\left(  x_{1}\right)  =0$ by the continuity of $\varphi
_{\lambda}^{\prime \prime}$.

We next show that $\varphi_{\lambda}^{\prime \prime}\left(  1\right)  <0$. It
is sufficient to prove that $L\left(  \lambda \right)  :=\varphi_{\lambda
}^{\prime \prime}\left(  1\right)  +\varphi_{\lambda}^{\prime \prime}\left(
-1\right)  <0$ for every $\lambda \in \left(  0,\frac{1}{2}\right)  $ according
to (\ref{1802}). By (\ref{2402}), the substitution $t=y-1$ and $s=-t$\ enables
us to get that%
\[%
\begin{array}
[c]{rl}%
\varphi_{\lambda}^{\prime \prime}\left(  1\right)  = & \int_{-1}^{+\infty
}\left(  t+1\right)  ^{-2\lambda}\left(  t^{2}-1\right)  \exp \left \{
-\frac{t^{2}}{2}\right \}  dt\\
= & \int_{-1}^{1}\left(  t+1\right)  ^{-2\lambda}\left(  t^{2}-1\right)
\exp \left \{  -\frac{t^{2}}{2}\right \}  dt+\int_{1}^{+\infty}\left(
t+1\right)  ^{-2\lambda}\left(  t^{2}-1\right)  \exp \left \{  -\frac{t^{2}}%
{2}\right \}  dt\\
= & \int_{0}^{1}\left(  t+1\right)  ^{-2\lambda}\left(  t^{2}-1\right)
\exp \left \{  -\frac{t^{2}}{2}\right \}  dt+\int_{0}^{1}\left(  1-s\right)
^{-2\lambda}\left(  s^{2}-1\right)  \exp \left \{  -\frac{s^{2}}{2}\right \}
ds\\
& +\int_{1}^{+\infty}\left(  t+1\right)  ^{-2\lambda}\left(  t^{2}-1\right)
\exp \left \{  -\frac{t^{2}}{2}\right \}  dt\text{.}%
\end{array}
\]
Substituting $t=y+1$, we have%
\[%
\begin{array}
[c]{rl}%
\varphi_{\lambda}^{\prime \prime}\left(  -1\right)  = & \int_{1}^{+\infty
}\left(  t-1\right)  ^{-2\lambda}\left(  t^{2}-1\right)  \exp \left \{
-\frac{t^{2}}{2}\right \}  dt\text{.}%
\end{array}
\]
Obviously, we have%
\[
L\left(  \lambda \right)  =g\left(  \lambda \right)  +h\left(  \lambda \right)
\text{,}%
\]
where%
\begin{align*}
g\left(  \lambda \right)   &  :=\int_{1}^{+\infty}\left[  \left(  t+1\right)
^{-2\lambda}+\left(  t-1\right)  ^{-2\lambda}\right]  \left(  t^{2}-1\right)
\exp \left \{  -\frac{t^{2}}{2}\right \}  dt\text{,}\\
h\left(  \lambda \right)   &  :=\int_{0}^{1}\left[  \left(  t+1\right)
^{-2\lambda}+\left(  1-t\right)  ^{-2\lambda}\right]  \left(  t^{2}-1\right)
\exp \left \{  -\frac{t^{2}}{2}\right \}  dt\text{.}%
\end{align*}
Direct computation shows that%
\[
g^{\prime \prime}\left(  \lambda \right)  =4\int_{1}^{+\infty}\left[  \left(
t-1\right)  ^{-2\lambda}\left[  \ln \left(  t-1\right)  \right]  ^{2}+\left(
t+1\right)  ^{-2\lambda}\left[  \ln \left(  t+1\right)  \right]  ^{2}\right]
\left(  t^{2}-1\right)  \exp \left \{  -\frac{t^{2}}{2}\right \}  dt\text{.}%
\]
Obviously $g^{\prime \prime}\left(  \lambda \right)  >0$ for $\lambda \in \left(
0,\frac{1}{2}\right)  $, so $g\left(  \cdot \right)  $ is convex. Thus, noting
that $\lim_{\lambda \rightarrow0}g\left(  \lambda \right)  =\lim_{\lambda
\rightarrow \frac{1}{2}}g\left(  \lambda \right)  =2\exp \left \{  -\frac{1}%
{2}\right \}  $, for $\lambda \in \left( 0,\frac{1}{2}\right)$, we have
\begin{equation}
g\left(  \lambda \right)<\sup_{\lambda \in \left(  0,\frac{1}{2}\right)  }g\left(  \lambda \right)
=2\exp \left \{  -\frac{1}{2}\right \}  \text{.}\label{33}%
\end{equation}
As for $h\left(  \cdot \right)  $, it is easy to check that%
\begin{equation}
h^{\prime}\left(  \lambda \right)  =-2\int_{0}^{1}\left[  \left(  t+1\right)
^{-2\lambda}\ln \left(  t+1\right)  +\left(  1-t\right)  ^{-2\lambda}\ln \left(
1-t\right)  \right]  \left(  t^{2}-1\right)  \exp \left \{  -\frac{t^{2}}%
{2}\right \}  dt\text{.}\label{0804}%
\end{equation}
For $t\in \left(  0,1\right)  $, we have%
\[
0<\ln \left(  1+t\right)  <-\ln \left(  1-t\right)  \text{.}%
\]
and
\[
\left(  t+1\right)  ^{-2\lambda}\ln \left(  1+t\right)  <-\left(  t+1\right)
^{-2\lambda}\ln \left(  1-t\right)  <-\left(  1-t\right)  ^{-2\lambda}%
\ln \left(  1-t\right)  \text{.}%
\]
Then we can easily verify that $h^{\prime}\left(  \lambda \right)  <0$ for
every $\lambda \in \left(  0,\frac{1}{2}\right)  $ by (\ref{0804}), which
implies
\begin{equation}
h\left(  \lambda \right)  <\lim_{\lambda \rightarrow0}h\left(  \lambda \right)
=-2\exp \left \{  -\frac{1}{2}\right \}  \text{.}\label{34}%
\end{equation}
By (\ref{33}) and (\ref{34}), for every $\lambda \in \left(  0,\frac{1}%
{2}\right)  $, we have
\[
L\left(  \lambda \right)  <\sup_{\lambda \in \left(  0,\frac{1}{2}\right)
}g\left(  \lambda \right)  +\lim_{\lambda \rightarrow0}h\left(  \lambda \right)
=0\text{.}%
\]
It follows that
\begin{equation}
\varphi_{\lambda}^{\prime \prime}\left(  1\right)  +\varphi_{\lambda}%
^{\prime \prime}\left(  -1\right)  <0\label{2801}%
\end{equation}
for every $\lambda \in \left(  0,\frac{1}{2}\right)  $, and consequently
$\varphi_{\lambda}^{\prime \prime}\left(  1\right)  <0$.

Now we prove that there exists a $\tilde{x}\in \left(  2,+\infty \right)  $ such
that $\varphi_{\lambda}^{\prime \prime}\left(  \tilde{x}\right)  >0$. We first show that%
\begin{equation}
\lim_{x\rightarrow+\infty}\varphi_{\lambda}^{\prime \prime}\left(  x\right)
=0\text{.}\label{2701}%
\end{equation}
Let $x>2$. By (\ref{2402}), we have%
\begin{equation}%
\begin{array}
[c]{rl}%
\varphi_{\lambda}^{\prime \prime}\left(  x\right)  = & \int_{0}^{+\infty
}y^{-2\lambda}\left[  \left(  y-x\right)  ^{2}-1\right]  \exp \left \{
-\frac{\left(  y-x\right)  ^{2}}{2}\right \}  dy\\
= & \int_{0}^{\frac{x}{2}}y^{-2\lambda}\left[  \left(  y-x\right)
^{2}-1\right]  \exp \left \{  -\frac{\left(  y-x\right)  ^{2}}{2}\right \}
dy+\int_{\frac{x}{2}}^{+\infty}y^{-2\lambda}\left[  \left(  y-x\right)
^{2}-1\right]  \exp \left \{  -\frac{\left(  y-x\right)  ^{2}}{2}\right \}
dy\text{.}%
\end{array}
\label{2702}%
\end{equation}
For the first term of (\ref{2702}), since $\left(  y-x\right)  ^{2}-1>0$ for
$y\in \left(  0,\frac{x}{2}\right)  \subseteq \left(  0,x-1\right)  $, it is
clear that%
\begin{equation}
0<\int_{0}^{\frac{x}{2}}y^{-2\lambda}\left[  \left(  y-x\right)
^{2}-1\right]  \exp \left \{  -\frac{\left(  y-x\right)  ^{2}}{2}\right \}
dy\leq \left(  x^{2}-1\right)  \exp \left \{  -\frac{x^{2}}{8}\right \}  \int
_{0}^{\frac{x}{2}}y^{-2\lambda}dy\rightarrow0,\label{2703}%
\end{equation}
when $x\rightarrow+\infty$. For the second term of (\ref{2702}), noting that
\begin{equation}
\int_{a}^{b}\left(  t^{2}-1\right)  \exp \left \{  -\frac{t^{2}}{2}\right \}
dt=a\exp \left \{  -\frac{a^{2}}{2}\right \}  -b\exp \left \{  -\frac{b^{2}}%
{2}\right \}  \text{,}\label{2705}%
\end{equation}
we can easily check that
\begin{equation}%
\begin{array}
[c]{ll}
& \left \vert \int_{\frac{x}{2}}^{+\infty}y^{-2\lambda}\left[  \left(
y-x\right)  ^{2}-1\right]  \exp \left \{  -\frac{\left(  y-x\right)  ^{2}}%
{2}\right \}  dy\right \vert \\
\leq & \int_{\frac{x}{2}}^{x-1}y^{-2\lambda}\left[  \left(  y-x\right)
^{2}-1\right]  \exp \left \{  -\frac{\left(  y-x\right)  ^{2}}{2}\right \}
dy+\left \vert \int_{x-1}^{x+1}y^{-2\lambda}\left[  \left(  y-x\right)
^{2}-1\right]  \exp \left \{  -\frac{\left(  y-x\right)  ^{2}}{2}\right \}
dy\right \vert \\
& +\int_{x+1}^{+\infty}y^{-2\lambda}\left[  \left(  y-x\right)  ^{2}-1\right]
\exp \left \{  -\frac{\left(  y-x\right)  ^{2}}{2}\right \}  dy\\
\leq & \left(  \frac{x}{2}\right)  ^{-2\lambda}\int_{\frac{x}{2}}^{x-1}\left[
\left(  y-x\right)  ^{2}-1\right]  \exp \left \{  -\frac{\left(  y-x\right)
^{2}}{2}\right \}  dy\\
& -\left(  x-1\right)  ^{-2\lambda}\int_{x-1}^{x+1}\left[  \left(  y-x\right)
^{2}-1\right]  \exp \left \{  -\frac{\left(  y-x\right)  ^{2}}{2}\right \}  dy\\
& +\left(  x+1\right)  ^{-2\lambda}\int_{x+1}^{+\infty}\left[  \left(
y-x\right)  ^{2}-1\right]  \exp \left \{  -\frac{\left(  y-x\right)  ^{2}}%
{2}\right \}  dy\\
\rightarrow & 0\text{,}%
\end{array}
\label{2704}%
\end{equation}
when $x\rightarrow+\infty$. Combining (\ref{2703}) and (\ref{2704})
with\ (\ref{2702}), we finally get (\ref{2701}). Similarly we can prove that
\begin{equation}
\lim_{x\rightarrow+\infty}\varphi_{\lambda}\left(  x\right)  =\int
_{0}^{+\infty}y^{-2\lambda}\exp \left \{  -\frac{\left(  y-x\right)  ^{2}}%
{2}\right \}  dy=0\text{.}\label{2416}%
\end{equation}
We assert that there exists a $\tilde{x}\in \left(  2,+\infty \right)  $ such
that $\varphi^{\prime \prime}\left(  \tilde{x}\right)  >0$. Otherwise
$\varphi_{\lambda}^{\prime \prime}\left(  x\right)  \leq0$ for every
$x\in \left(  2,+\infty \right)  $. Since $\varphi_{\lambda}$ satisfies
ODE\ (\ref{55}), we have $\lim_{x\rightarrow+\infty}\varphi_{\lambda}^{\prime
}\left(  x\right)  =0$ by (\ref{2701}) and (\ref{2416}). According to the
assumption that $\varphi_{\lambda}^{\prime \prime}$ is non-positive, we have
$\varphi_{\lambda}^{\prime}\left(  x\right)  \geq0$ for every $x\in \left(
2,+\infty \right)  $, which contradicts (\ref{2416}) and the fact that
$\varphi_{\lambda}\left(  x\right)  >0$ for every $x\in \left(  2,+\infty
\right)  $. Thus there exists a $\tilde{x}\in \left(  2,+\infty \right)  $ such
that $\varphi_{\lambda}^{\prime \prime}\left(  \tilde{x}\right)  >0$.

According to the above discussion\ and the continuity of $\varphi_{\lambda
}^{\prime \prime}$, we know there exists a $x_{2}\in \left(1,+\infty \right)
$ such that $\varphi_{\lambda}^{\prime \prime}\left(  x_{2}\right)  =0$.

\textbf{Step 2:} We next prove the uniqueness of $x_{1}^{\lambda}$ on $\left(
-1,0\right)  $ and the uniqueness of $x_{2}^{\lambda}$ on $\left(
1,+\infty \right)  $.

Define%
\begin{align*}
x_{1}^{\lambda}  &  :=\sup \left \{  x\in \left(  -1,0\right)  :\varphi_{\lambda
}^{\prime \prime}\left(  x\right)  =0\right \}  \text{,}\\
\, \,x_{2}^{\lambda}  &  :=\inf \left \{  x\in \left(  1,+\infty \right)
:\varphi_{\lambda}^{\prime \prime}\left(  x\right)  =0\right \}  \text{.}%
\end{align*}
Notice that $\varphi_{\lambda}^{\prime \prime}\left(  x_{1}^{\lambda}\right)
=\varphi_{\lambda}^{\prime \prime}\left(  x_{2}^{\lambda}\right)  =0$. Then,
according to Step 1, it is clear that $\varphi_{\lambda}^{\prime \prime}\left(
x\right)  <0$ for $x\in \left(  x_{1}^{\lambda},0\right)  \cup \left(
1,x_{2}^{\lambda}\right)  $ by the continuity of $\varphi_{\lambda}%
^{\prime \prime}$. Now we prove that $\varphi_{\lambda}^{\prime \prime}\left(
x\right)  >0$ for every $x\in \left(  -1,x_{1}^{\lambda}\right)  \cup \left(
x_{2}^{\lambda},+\infty \right)  $, which implies the uniqueness of
$x_{1}^{\lambda}$ on $\left(  -1,0\right)  $ and the uniqueness of
$x_{2}^{\lambda}$ on $\left(  1,+\infty \right)  $.

For $x\in \left(  -1,x_{1}^{\lambda}\right)  \subseteq \left(  -1,0\right)  $,
from (\ref{2402}) we have%
\begin{equation}%
\begin{array}
[c]{cc}%
\varphi_{\lambda}^{\prime \prime}\left(  x\right)  = & \int_{0}^{x+1}%
y^{-2\lambda}\left[  \left(  y-x\right)  ^{2}-1\right]  \exp \left \{
-\frac{\left(  y-x\right)  ^{2}}{2}\right \}  dy+\int_{x+1}^{+\infty
}y^{-2\lambda}\left[  \left(  y-x\right)  ^{2}-1\right]  \exp \left \{
-\frac{\left(  y-x\right)  ^{2}}{2}\right \}  dy\text{.}%
\end{array}
\label{28001}%
\end{equation}
For the first term in (\ref{28001}), since $0<y-x_{1}^{\lambda}<y-x<1$ for
$y\in \left(  0,x+1\right)  $, we have
\begin{equation}%
\begin{array}
[c]{rl}
& \int_{0}^{x+1}y^{-2\lambda}\left[  \left(  y-x\right)  ^{2}-1\right]
\exp \left \{  -\frac{\left(  y-x\right)  ^{2}}{2}\right \}  dy\\
> & \int_{0}^{x+1}y^{-2\lambda}\left[  \left(  y-x_{1}^{\lambda}\right)
^{2}-1\right]  \exp \left \{  -\frac{\left(  y-x_{1}^{\lambda}\right)  ^{2}}%
{2}\right \}  dy\\
= & \int_{0}^{x_{1}^{\lambda}+1}y^{-2\lambda}\left[  \left(  y-x_{1}^{\lambda
}\right)  ^{2}-1\right]  \exp \left \{  -\frac{\left(  y-x_{1}^{\lambda}\right)
^{2}}{2}\right \}  dy\\
& -\int_{x+1}^{x_{1}^{\lambda}+1}y^{-2\lambda}\left[  \left(  y-x_{1}%
^{\lambda}\right)  ^{2}-1\right]  \exp \left \{  -\frac{\left(  y-x_{1}%
^{\lambda}\right)  ^{2}}{2}\right \}  dy\\
> & \int_{0}^{x_{1}^{\lambda}+1}y^{-2\lambda}\left[  \left(  y-x_{1}^{\lambda
}\right)  ^{2}-1\right]  \exp \left \{  -\frac{\left(  y-x_{1}^{\lambda}\right)
^{2}}{2}\right \}  dy\text{.}%
\end{array}
\label{2418}%
\end{equation}
For the second term in (\ref{28001}), it is easy to check that $\left(
t+x\right)  ^{-2\lambda}>\left(  t+x_{1}^{\lambda}\right)  ^{-2\lambda}$ for
$t>1$. Then we have%
\begin{equation}%
\begin{array}
[c]{rl}
& \int_{x+1}^{+\infty}y^{-2\lambda}\left[  \left(  y-x\right)  ^{2}-1\right]
\exp \left \{  -\frac{\left(  y-x\right)  ^{2}}{2}\right \}  dy\\
= & \int_{1}^{+\infty}\left(  t+x\right)  ^{-2\lambda}\left(  t^{2}-1\right)
\exp \left \{  -\frac{t^{2}}{2}\right \}  dt\\
> & \int_{1}^{+\infty}\left(  t+x_{1}^{\lambda}\right)  ^{-2\lambda}\left(
t^{2}-1\right)  \exp \left \{  -\frac{t^{2}}{2}\right \}  dt\\
= & \int_{x_{1}^{\lambda}+1}^{+\infty}s^{-2\lambda}\left[  \left(
s-x_{1}^{\lambda}\right)  ^{2}-1\right]  \exp \left \{  -\frac{\left(
s-x_{1}^{\lambda}\right)  ^{2}}{2}\right \}  ds%
\end{array}
\label{2802}%
\end{equation}
by substitution $t=y-x$ in the first equation\ and $s=t+x_{1}^{\lambda}$ in
the last equation. Combining (\ref{2418}) and (\ref{2802}) with (\ref{28001}),
we get that $\varphi_{\lambda}^{\prime \prime}\left(  x\right)  >\varphi
_{\lambda}^{\prime \prime}\left(  x_{1}^{\lambda}\right)  =0$ for $x\in \left(
-1,x_{1}^{\lambda}\right)  $.

For $x\in \left(  x_{2}^{\lambda},+\infty \right)  $, we assert that
$\varphi_{\lambda}^{\prime \prime}\left(  x\right)  >0$. As for the converse
assertion, noting that $\varphi_{\lambda}^{\prime \prime}\left(  x_{2}%
^{\lambda}\right)  =0$ and $\lim_{x\rightarrow+\infty}\varphi_{\lambda
}^{\prime \prime}\left(  x\right)  =0$, we suppose that there exists a $\bar
{x}\in \left(  x_{2}^{\lambda},+\infty \right)  $ such that $\varphi_{\lambda
}^{\prime \prime \prime}\left(  \bar{x}\right)  =0$ and $\varphi_{\lambda
}^{\prime \prime}\left(  \bar{x}\right)  \leq0$. If $\bar{x}>\sqrt{1+2\lambda}%
$, by (\ref{8}), we deduce that
\[
\varphi_{\lambda}^{\prime \prime}\left(  \bar{x}\right)  =\frac{2\lambda \left(
2\lambda+1\right)  }{\bar{x}^{2}-2\lambda-1}\varphi_{\lambda}\left(  \bar
{x}\right)  >0\text{,}%
\]
which contradicts the assumption that $\varphi_{\lambda}^{\prime \prime}\left(
\bar{x}\right)  \leq0$. If $\bar{x}\leq \sqrt{1+2\lambda}$, we define%
\[
\hat{x}:=\min \left \{  x\in \left[  x_{2}^{\lambda},\bar{x}\right]
:\varphi_{\lambda}^{\prime \prime \prime}\left(  x\right)  =0\right \}  \text{.}%
\]
By (\ref{8}) and the fact that $\varphi_{\lambda}^{\prime \prime}\left(
x_{2}^{\lambda}\right)  =0$, we have%
\[
\varphi_{\lambda}^{\prime \prime \prime}\left(  x_{2}^{\lambda}\right)
=\frac{2\lambda \left(  2\lambda+1\right)  }{x_{2}^{\lambda}}\varphi_{\lambda
}\left(  x_{2}^{\lambda}\right)  >0\text{.}%
\]
Thus $\hat{x}>x_{2}^{\lambda}$ and $\varphi_{\lambda}^{\prime \prime \prime
}\left(  x\right)  >0$ for $x\in \left(  x_{2}^{\lambda},\hat{x}\right)  $,
which implies that $\varphi_{\lambda}^{\prime \prime}$ is strictly increasing
on $\left(  x_{2}^{\lambda},\hat{x}\right]  $. It follows that $\varphi
_{\lambda}^{\prime \prime}\left(  x\right)  >0$ for $x\in \left(  x_{2}%
^{\lambda},\hat{x}\right]  $ by $\varphi_{\lambda}^{\prime \prime}\left(
x_{2}^{\lambda}\right)  =0$, particularly $\varphi_{\lambda}^{\prime \prime
}\left(  \hat{x}\right)  >0$. Applying (\ref{8}) again, we obtain%
\[
\varphi_{\lambda}^{\prime \prime \prime}\left(  \hat{x}\right)  =\frac
{2\lambda \left(  2\lambda+1\right)  }{\hat{x}}\varphi_{\lambda}\left(  \hat
{x}\right)  -\frac{\hat{x}^{2}-2\lambda-1}{\hat{x}}\varphi_{\lambda}%
^{\prime \prime}\left(  \hat{x}\right)  >0\text{,}%
\]
which contradicts the definition of $\hat{x}$. Thus we conclude that
$\varphi_{\lambda}^{\prime \prime}\left(  x\right)  >0$ for $x\in \left(
x_{2}^{\lambda},+\infty \right)  $.

\textbf{Step 3:} Finally we show that $\varphi_{\lambda}^{\prime \prime}\left(
x\right)  >0$ for $x\in \left(  -\infty,-1\right)  $ and $\varphi_{\lambda
}^{\prime \prime}\left(  x\right)  <0$ for $x\in \left(  0,1\right)  $.

For $x\in \left(  -\infty,-1\right)  $, note that $\left(  y-x\right)
^{2}-1>0$ for $y\geq0$, then it is obvious that $\varphi_{\lambda}%
^{\prime \prime}\left(  x\right)  >0$ by (\ref{2402}).

Consider $\varphi_{\lambda}^{\prime \prime}\left(  x\right)  $ for $x\in \left(
0,1\right)  $. Let $x^{\ast}\in \left[  0,1\right]  $ denote the point such
that
\begin{equation}
\varphi_{\lambda}^{\prime \prime}\left(  x^{\ast}\right)  =\max_{x\in \left[
0,1\right]  }\varphi_{\lambda}^{\prime \prime}\left(  x\right)  \text{.}%
\label{0604}%
\end{equation}
If $x^{\ast}=0$ or $x^{\ast}=1$, then $\varphi_{\lambda}^{\prime \prime}\left(
x\right)  <0$ because we have shown that $\varphi_{\lambda}^{\prime \prime
}\left(  0\right)  <0$ and $\varphi_{\lambda}^{\prime \prime}\left(  1\right)
<0$ in Step 1. If $x^{\ast}\in \left(  0,1\right)  $, then $\varphi_{\lambda
}^{\prime \prime \prime}\left(  x^{\ast}\right)  =0$. Applying (\ref{9}) and
(\ref{10}), we have%
\[
\left[  \left(  x^{\ast}\right)  ^{2}-2\lambda-1\right]  \varphi_{\lambda
}^{\prime}\left(  x^{\ast}\right)  +2\lambda x^{\ast}\varphi_{\lambda}\left(
x^{\ast}\right)  =0
\]
and
\[
\left(  -2\lambda-1\right)  \varphi_{\lambda}^{\prime}\left(  x^{\ast}\right)
-x^{\ast}\varphi_{\lambda}^{\prime \prime}\left(  x^{\ast}\right)  =0\text{.}%
\]
It follows that%
\[
\varphi_{\lambda}^{\prime \prime}\left(  x^{\ast}\right)  =\frac{-2\lambda
-1}{x^{\ast}}\cdot \frac{-2\lambda x^{\ast}\varphi_{\lambda}\left(  x^{\ast
}\right)  }{\left(  x^{\ast}\right)  ^{2}-2\lambda-1}<0\text{.}%
\]
Thus we have $\varphi_{\lambda}^{\prime \prime}\left(  x\right)  <0$ for every
$x\in \left(  0,1\right)  $ by (\ref{0604}).

\subsection{\textbf{Proof of Lemma \ref{2107}\label{0802}}}

Let $\lambda \in \left(  0,\frac{1}{2}\right)  $ be fixed. Since
\[
\Psi_{\lambda}\left(  x\right)  =\mu_{2}\cdot \left[  \frac{\mu_{1}}{\mu_{2}%
}\varphi_{\lambda}\left(  x\right)  +\varphi_{\lambda}\left(  -x\right)
\right]  \text{,}%
\]
where $\mu_{1},\mu_{2}>0$, without loss of generality we only consider the
following type of solutions to ODE (\ref{55}):
\begin{equation}
\Psi_{\lambda,k}\left(  x\right)  =k\varphi_{\lambda}\left(  x\right)
+\varphi_{\lambda}\left(  -x\right)  , \label{12}%
\end{equation}
where $k\in \left(  0,+\infty \right)  $ is any fixed constant. It is clear that
$\Psi_{\lambda,k}$ is positive on $%
\mathbb{R}
$. The proof will be divided into two steps.

\textbf{Step 1:} For $x\in \left[  0,+\infty \right)  $, by Lemma \ref{2106}, we
can easily check that%
\begin{equation}%
\begin{array}
[c]{l}%
\Psi_{\lambda,k}^{\prime \prime}\left(  x\right)  <0\text{ for }x\in \left[
0,-x_{1}^{\lambda}\right]  \text{,}\\
\Psi_{\lambda,k}^{\prime \prime}\left(  x\right)  >0\text{ for }x\in \left[
x_{2}^{\lambda},+\infty \right)  \text{.}%
\end{array}
\label{2501}%
\end{equation}
Then there exists a $z^{\ast}\in \left(  -x_{1}^{\lambda},x_{2}^{\lambda
}\right)  $ such that $\Psi_{\lambda,k}^{\prime \prime}\left(  z^{\ast}\right)
=0$ by the continuity of $\Psi_{\lambda,k}^{\prime \prime}$. Define%
\begin{equation}
z_{2}^{\lambda,k}:=\min \left \{  x\in \left(  -x_{1}^{\lambda},x_{2}^{\lambda
}\right)  :\Psi_{\lambda,k}^{\prime \prime}\left(  x\right)  =0\right \}
\text{.} \label{13}%
\end{equation}
By (\ref{2501}) and (\ref{13}), it is obvious that
\begin{equation}
\Psi_{\lambda,k}^{\prime \prime}\left(  x\right)  <0\text{ for }x\in \left(
-x_{1}^{\lambda},z_{2}^{\lambda,k}\right)  \text{.} \label{2803}%
\end{equation}
We proceed to show that%
\begin{equation}
\Psi_{\lambda,k}^{\prime \prime}\left(  x\right)  >0\text{ for }x\in \left(
z_{2}^{\lambda,k},x_{2}^{\lambda}\right)  \text{,} \label{2804}%
\end{equation}
which implies the uniqueness of $z_{2}^{\lambda,k}$ on $\left(  -x_{1}%
^{\lambda},x_{2}^{\lambda}\right)  $.

We assert that $\Psi_{\lambda,k}^{\prime \prime}\left(  x\right)  >0$ for
$x\in \left(  z_{2}^{\lambda,k},x_{2}^{\lambda}\right)  $. Otherwise, there exists a $x^{\prime}\in \left(  z_{2}%
^{\lambda,k},x_{2}^{\lambda}\right)  $ such that $\Psi_{\lambda,k}%
^{\prime \prime \prime}\left(  x^{\prime}\right)  =0$ and $\Psi_{\lambda
,k}^{\prime \prime}\left(  x^{\prime}\right)  \leq0$ because we have $\Psi_{\lambda,k}^{\prime \prime}\left(  z_{2}^{\lambda,k}\right)  =0$ by
(\ref{13}) and $\Psi_{\lambda,k}^{\prime \prime}\left(  x_{2}^{\lambda}\right)
>0$ by (\ref{2501}). If $x^{\prime}%
>\sqrt{1+2\lambda}$, then by (\ref{8}) we obtain
\[
\Psi_{\lambda,k}^{\prime \prime \prime}\left(  x^{\prime}\right)  =\frac
{2\lambda \left(  2\lambda+1\right)  }{x^{\prime}}\Psi_{\lambda,k}\left(
x^{\prime}\right)  -\frac{\left(  x^{\prime}\right)  ^{2}-2\lambda
-1}{x^{\prime}}\Psi_{\lambda,k}^{\prime \prime}\left(  x^{\prime}\right)
>0\text{,}%
\]
which is contrary to $\Psi_{\lambda,k}^{\prime \prime \prime}\left(  x^{\prime
}\right)  =0$. If $x^{\prime}\leq \sqrt{1+2\lambda}$, we define%
\[
\hat{z}:=\min \left \{  x\in \left[  z_{2}^{\lambda,k},x^{\prime}\right]
:\Psi_{\lambda,k}^{\prime \prime \prime}\left(  x\right)  =0\right \}  \text{.}%
\]
By (\ref{8}) and the fact that $\Psi_{\lambda,k}^{\prime \prime}\left(
z_{2}^{\lambda,k}\right)  =0$, we have%
\[
\Psi_{\lambda,k}^{\prime \prime \prime}\left(  z_{2}^{\lambda,k}\right)
=\frac{2\lambda \left(  2\lambda+1\right)  }{z_{2}^{\lambda,k}}\Psi_{\lambda
,k}\left(  z_{2}^{\lambda,k}\right)  >0\text{.}%
\]
Thus $\hat{z}>z_{2}^{\lambda,k}$ and $\Psi_{\lambda,k}^{\prime \prime \prime
}\left(  x\right)  >0$ for $x\in \left(  z_{2}^{\lambda,k},\hat{z}\right)  $,
which implies that $\Psi_{\lambda,k}^{\prime \prime}$ is strictly increasing on
$\left(  z_{2}^{\lambda,k},\hat{z}\right]  $. It follows that $\Psi
_{\lambda,k}^{\prime \prime}\left(  x\right)  >0$ for $x\in \left(
z_{2}^{\lambda,k},\hat{z}\right]  $ by $\Psi_{\lambda,k}^{\prime \prime}\left(
z_{2}^{\lambda,k}\right)  =0$, particularly $\Psi_{\lambda,k}^{\prime \prime
}\left(  \hat{z}\right)  >0$. Applying (\ref{8}) again, we can easily verify
that%
\[
\Psi_{\lambda,k}^{\prime \prime \prime}\left(  \hat{z}\right)  =\frac
{2\lambda \left(  2\lambda+1\right)  }{\hat{z}}\Psi_{\lambda,k}\left(  \hat
{z}\right)  -\frac{\hat{z}^{2}-2\lambda-1}{\hat{z}}\Psi_{\lambda,k}%
^{\prime \prime}\left(  \hat{z}\right)  >0\text{,}%
\]
which contradicts the definition of $\hat{z}$. Consequently, (\ref{2804}) is proved.

\textbf{Step 2:} For $x\in \left(  -\infty,0\right)  $, note that
\[
\Psi_{\lambda,k}^{\prime \prime}\left(  x\right)  =k\cdot \left[  \varphi
_{\lambda}^{\prime \prime}\left(  x\right)  +\frac{1}{k}\varphi_{\lambda
}^{\prime \prime}\left(  -x\right)  \right]  =k\cdot \Psi_{\lambda,\frac{1}{k}%
}^{\prime \prime}\left(  -x\right)  \text{,}%
\]
where $k\in \left(  0,+\infty \right)  $. Therefore, by Step 1, $z_{1}%
^{\lambda,k}=-z_{2}^{\lambda,\frac{1}{k}}\in \left(  -x_{2}^{\lambda}%
,x_{1}^{\lambda}\right)  $ satisfies $\Psi_{\lambda,k}^{\prime \prime}\left(
z_{1}^{\lambda,k}\right)  =0$ and the uniqueness of $z_{1}^{\lambda,k}$ is
obvious, and we also have%
\[
\left \{
\begin{array}
[c]{l}%
\Psi_{\lambda,k}^{\prime \prime}\left(  x\right)  <0\text{ for }x\in \left(
z_{1}^{\lambda,k},0\right)  \text{,}\\
\Psi_{\lambda,k}^{\prime \prime}\left(  x\right)  >0\text{ for }x\in \left(
-\infty,z_{1}^{\lambda,k}\right)  \text{.}%
\end{array}
\right.
\]

\subsection{\textbf{Proof of Lemma \ref{2303}\label{1501}}}

It is obvious that $x_{1}^{\lambda}$ is continuous in $\lambda \in \left(
0,\frac{1}{2}\right)  $.

Let $x\in \left(  -1,0\right)  $ be any fixed constant. For $\lambda
\in \left(  0,\frac{1}{2}\right)  $, define
\begin{equation}
f_{x}\left(  \lambda \right)  :=\varphi_{\lambda}^{\prime \prime}\left(
x\right)  =\int_{0}^{+\infty}y^{-2\lambda}\left[  \left(  y-x\right)
^{2}-1\right]  \exp \left \{  -\frac{\left(  y-x\right)  ^{2}}{2}\right \}
dy\text{,}\label{0805}%
\end{equation}
We only need to show that there exists a unique $\tilde{\lambda}_{1}\in \left(
0,\frac{1}{2}\right)  $ such that
\begin{equation}
\left \{
\begin{array}
[c]{l}%
f_{x}\left(  \tilde{\lambda}_{1}\right)  =0\text{,}\\
f_{x}\left(  \lambda \right)  >0\text{ for }\lambda \in \left(  0,\tilde{\lambda
}_{1}\right)  \text{,}\\
f_{x}\left(  \lambda \right)  <0\text{ for }\lambda \in \left(  \tilde{\lambda
}_{1},\frac{1}{2}\right)  \text{.}%
\end{array}
\right.  \label{1803}%
\end{equation}
Indeed, for $0<\lambda^{\prime}<\lambda^{\prime \prime}<\frac{1}{2}$, since
$f_{x_{1}^{\lambda^{\prime \prime}}}\left(  \lambda^{\prime \prime}\right)
=\varphi_{\lambda^{\prime \prime}}^{\prime \prime}\left(  x_{1}^{\lambda
^{\prime \prime}}\right)  =0$ by Notation \ref{3.2}, we have $\varphi
_{\lambda^{\prime}}^{\prime \prime}\left(  x_{1}^{\lambda^{\prime \prime}%
}\right)  =f_{x_{1}^{\lambda^{\prime \prime}}}\left(  \lambda^{\prime}\right)
>0$ by (\ref{1803}). Hence $x_{1}^{\lambda^{\prime \prime}}<x_{1}%
^{\lambda^{\prime}}$ by Lemma \ref{2106}, which implies that $x_{1}^{\lambda}$
is strictly decreasing in $\lambda \in \left(  0,\frac{1}{2}\right)  $. Note
that $x\in \left(  -1,0\right)  $ is arbitrary, then we get (\ref{3001}). The
proof of (\ref{1803}) is divided into two steps.

\textbf{Step 1:} We first show that $\lim_{\lambda \rightarrow0}f_{x}\left(
\lambda \right)  >0$ and $\lim_{\lambda \rightarrow \frac{1}{2}}f_{x}\left(
\lambda \right)  =-\infty$.

By dominated convergence theorem, it is easily seen
that%
\begin{equation}
\lim_{\lambda \rightarrow0}f_{x}\left(  \lambda \right)  =\int_{0}^{+\infty
}\left[  \left(  y-x\right)  ^{2}-1\right]  \exp \left \{  -\frac{\left(
y-x\right)  ^{2}}{2}\right \}  dy=-x\cdot \exp \left \{  -\frac{x^{2}}{2}\right \}
>0\text{.} \label{0204}%
\end{equation}

As for the case when $\lambda \rightarrow \frac{1}{2}$, by substitution $t=y-x$
in (\ref{0805}), we have%
\begin{equation}%
\begin{array}
[c]{ll}
& \lim_{\lambda \rightarrow \frac{1}{2}}f_{x}\left(  \lambda \right) \\
= & \lim_{\lambda \rightarrow \frac{1}{2}}\int_{-x}^{+\infty}\left(  t+x\right)
^{-2\lambda}\left(  t^{2}-1\right)  \exp \left \{  -\frac{t^{2}}{2}\right \}
dt\\
= & \lim_{\lambda \rightarrow \frac{1}{2}}\int_{-x}^{1}\left(  t+x\right)
^{-2\lambda}\left(  t^{2}-1\right)  \exp \left \{  -\frac{t^{2}}{2}\right \}
dt+\lim_{\lambda \rightarrow \frac{1}{2}}\int_{1}^{+\infty}\left(  t+x\right)
^{-2\lambda}\left(  t^{2}-1\right)  \exp \left \{  -\frac{t^{2}}{2}\right \}
dt\text{.}%
\end{array}
\label{0201}%
\end{equation}
For the first term of (\ref{0201}), by monotone convergence theorem, we can
easily check that%
\begin{equation}
\lim_{\lambda \rightarrow \frac{1}{2}}\int_{-x}^{1}\left(  t+x\right)
^{-2\lambda}\left(  t^{2}-1\right)  \exp \left \{  -\frac{t^{2}}{2}\right \}
dt=\int_{-x}^{1}\left(  t+x\right)  ^{-1}\left(  t^{2}-1\right)  \exp \left \{
-\frac{t^{2}}{2}\right \}  dt=-\infty \text{.} \label{1804}%
\end{equation}
And for the second term of (\ref{0201}), by dominated convergence theorem, we
have
\begin{equation}
\lim_{\lambda \rightarrow \frac{1}{2}}\int_{1}^{+\infty}\left(  t+x\right)
^{-2\lambda}\left(  t^{2}-1\right)  \exp \left \{  -\frac{t^{2}}{2}\right \}
dt=\int_{1}^{+\infty}\left(  t+x\right)  ^{-1}\left(  t^{2}-1\right)
\exp \left \{  -\frac{t^{2}}{2}\right \}  dt<+\infty \label{1805}%
\end{equation}
Substituting (\ref{1804}) and (\ref{1805}) into (\ref{0201}), we conclude that
$\lim_{\lambda \rightarrow \frac{1}{2}}f_{x}\left(  \lambda \right)  =-\infty$.

\textbf{Step 2:} By Step 1 and the continuity of $f_{x}\left(  \lambda \right)
$ in $\lambda \in \left(  0,\frac{1}{2}\right)  $, we know that there exists
$\tilde{\lambda}_{1}\in \left(  0,\frac{1}{2}\right)  $ such that $f_{x}\left(
\tilde{\lambda}_{1}\right)  =0$. Now we show that $f_{x}\left(  \lambda
\right)  >0$ for $\lambda \in \left(  0,\tilde{\lambda}_{1}\right)  $ and
$f_{x}\left(  \lambda \right)  <0$ for $\lambda \in \left(  \tilde{\lambda}%
_{1},\frac{1}{2}\right)  $, which directly implies the uniqueness of
$\tilde{\lambda}_{1}$ on $\left(  0,\frac{1}{2}\right)  $.

Set%
\begin{equation}
\bar{\lambda}_{1}=\max \left \{  \lambda \in \left(  0,\frac{1}{2}\right)
:f_{x}\left(  \lambda \right)  =0\right \}  \text{.}\label{0807}%
\end{equation}
Obviously, we have $f_{x}\left(  \lambda \right)  <0$ for $\lambda \in \left(
\bar{\lambda}_{1},\frac{1}{2}\right)  $. We assert that $f_{x}\left(
\lambda \right)  >0$ for $\lambda \in \left(  0,\bar{\lambda}_{1}\right)  $,
which is the desired conclusion. Suppose the assertion is false. Then, by
(\ref{0204}) and the fact that $f_{x}\left(  \bar{\lambda}_{1}\right)  =0$,
there exists a $\lambda^{\ast}\in \left(  0,\bar{\lambda}_{1}\right)  $ such
that%
\[
f_{x}^{\prime}\left(  \lambda^{\ast}\right)  =0\text{ and }f_{x}\left(
\lambda^{\ast}\right)  \leq0\text{.}%
\]
Substituting $t=y-x$ into (\ref{0805}), we get that%
\begin{equation}
\int_{-x}^{1-x}\left(  t+x\right)  ^{-2\lambda^{\ast}}\left(  t^{2}-1\right)
\exp \left \{  -\frac{t^{2}}{2}\right \}  dt+\int_{1-x}^{+\infty}\left(
t+x\right)  ^{-2\lambda^{\ast}}\left(  t^{2}-1\right)  \exp \left \{
-\frac{t^{2}}{2}\right \}  dt\leq0\text{.}\label{0901}%
\end{equation}
Obviously, we have $\int_{1-x}^{+\infty}\left(  t+x\right)  ^{-2\lambda^{\ast
}}\left(  t^{2}-1\right)  \exp \left \{  -\frac{t^{2}}{2}\right \}  dt>0$. Then
by (\ref{0901}) we get that%
\[
\int_{-x}^{1-x}\left(  t+x\right)  ^{-2\lambda^{\ast}}\left(  t^{2}-1\right)
\exp \left \{  -\frac{t^{2}}{2}\right \}  dt<0\text{,}%
\]
which implies%
\begin{equation}
0<\int_{1}^{1-x}\left(  t+x\right)  ^{-2\lambda^{\ast}}\left(  t^{2}-1\right)
\exp \left \{  -\frac{t^{2}}{2}\right \}  dt<-\int_{-x}^{1}\left(  t+x\right)
^{-2\lambda^{\ast}}\left(  t^{2}-1\right)  \exp \left \{  -\frac{t^{2}}%
{2}\right \}  dt\text{.}\label{1901}%
\end{equation}
On the other hand, by\ substitution $t=y-x$, we can easily compute that%
\begin{equation}
f_{x}^{\prime}\left(  \lambda^{\ast}\right)  =-2\int_{-x}^{+\infty}\ln \left(
t+x\right)  \left(  t+x\right)  ^{-2\lambda^{\ast}}\left(  t^{2}-1\right)
\exp \left \{  -\frac{t^{2}}{2}\right \}  dt=0\text{.}\label{0206}%
\end{equation}
Noting that $\ln \left(  t+x\right)  <0$ for $t\in \left(  1,1-x\right)  $, from
(\ref{1901}) we can check that%
\[%
\begin{array}
[c]{ll}
& \int_{1}^{1-x}\ln \left(  t+x\right)  \left(  t+x\right)  ^{-2\lambda^{\ast}%
}\left(  t^{2}-1\right)  \exp \left \{  -\frac{t^{2}}{2}\right \}  dt\\
> & \ln \left(  1+x\right)  \cdot \int_{1}^{1-x}\left(  t+x\right)
^{-2\lambda^{\ast}}\left(  t^{2}-1\right)  \exp \left \{  -\frac{t^{2}}%
{2}\right \}  dt\\
> & \ln \left(  1+x\right)  \cdot \left(  -\int_{-x}^{1}\left(  t+x\right)
^{-2\lambda^{\ast}}\left(  t^{2}-1\right)  \exp \left \{  -\frac{t^{2}}%
{2}\right \}  dt\right)  \\
> & -\int_{-x}^{1}\ln \left(  t+x\right)  \left(  t+x\right)  ^{-2\lambda
^{\ast}}\left(  t^{2}-1\right)  \exp \left \{  -\frac{t^{2}}{2}\right \}
dt\text{,}%
\end{array}
\]
which implies%
\begin{equation}
\int_{-x}^{1-x}\ln \left(  t+x\right)  \left(  t+x\right)  ^{-2\lambda^{\ast}%
}\left(  t^{2}-1\right)  \exp \left \{  -\frac{t^{2}}{2}\right \}  dt>0\text{.}%
\label{0205}%
\end{equation}
Obviously, we have%
\begin{equation}
\int_{1-x}^{+\infty}\ln \left(  t+x\right)  \left(  t+x\right)  ^{-2\lambda
^{\ast}}\left(  t^{2}-1\right)  \exp \left \{  -\frac{t^{2}}{2}\right \}
dt>0\text{.}\label{16}%
\end{equation}
The combination of (\ref{0205}) and (\ref{16}) is contrary to (\ref{0206}).

\subsection{\textbf{Proof of Lemma \ref{1801}\label{0803}}}

Before giving the proof of Lemma \ref{1801}, we need the\ following lemmas.

\begin{lemma}
\label{61405}Let $x_{2}^{\lambda}$ be defined by Notation \ref{3.2}. Then
$x_{2}^{\lambda}$ is continuous and\ strictly\ decreasing in $\lambda
\in \left(  0,\frac{1}{2}\right)  $, and
\[
\lim_{\lambda \rightarrow0}x_{2}^{\lambda}=+\infty,\lim_{\lambda \rightarrow
\frac{1}{2}}x_{2}^{\lambda}=1.
\]

\end{lemma}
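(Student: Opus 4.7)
The plan is to mirror the proof of Lemma \ref{2303}, with $x \in (-1, 0)$ replaced by $x \in (1, +\infty)$. For fixed $x \in (1, +\infty)$, I would set $f_x(\lambda) := \varphi_{\lambda}^{\prime \prime}(x)$ as in that proof, and try to show that there is a unique $\tilde{\lambda}_2 = \tilde{\lambda}_2(x) \in (0, \frac{1}{2})$ with $f_x(\tilde{\lambda}_2) = 0$, $f_x(\lambda) < 0$ on $(0, \tilde{\lambda}_2)$, and $f_x(\lambda) > 0$ on $(\tilde{\lambda}_2, \frac{1}{2})$. Once this is in hand, the strict monotonicity of $x_2^{\lambda}$ follows immediately from Lemma \ref{2106}: for $\lambda' < \lambda''$ in $(0, \frac{1}{2})$, taking $x = x_2^{\lambda'}$ gives $f_x(\lambda') = 0$ and hence $f_x(\lambda'') > 0$, so $\varphi_{\lambda''}^{\prime \prime}(x_2^{\lambda'}) > 0$, which forces $x_2^{\lambda''} < x_2^{\lambda'}$. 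Continuity of $x_2^{\lambda}$ in $\lambda$ is automatic from the joint continuity of $\varphi_{\lambda}^{\prime \prime}(x)$ in $(\lambda, x)$ together with the sign information in Lemma \ref{2106}.

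I would compute the two boundary limits of $f_x$ directly. By dominated convergence and identity (\ref{2705}),
\[
\lim_{\lambda \to 0^+} f_x(\lambda) = \int_{-x}^{+\infty} (t^2 - 1) \exp\{-t^2/2\} \, dt = -x \exp\{-x^2/2\} < 0,
\]
which, applied to an arbitrary $M > 1$, gives $M < x_2^{\lambda}$ for small $\lambda$ and hence $\lim_{\lambda \to 0} x_2^{\lambda} = +\infty$. For $\lambda \to \frac{1}{2}^-$, after substituting $t = y - x$ and splitting $\int_{-x}^{+\infty}$ as $\int_{-x}^{-1} + \int_{-1}^{1} + \int_{1}^{+\infty}$, the first piece has the strictly positive integrand $(t^2 - 1) \exp\{-t^2/2\}$ multiplied by $(t + x)^{-2\lambda}$, whose singularity at $t = -x$ ceases to be integrable at $\lambda = \frac{1}{2}$, so by monotone convergence this piece blows up to $+\infty$; the other two pieces stay bounded by dominated convergence. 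Thus $\lim_{\lambda \to \frac{1}{2}^-} f_x(\lambda) = +\infty$, and applied to $x = 1 + \varepsilon$ this shows $x_2^{\lambda} < 1 + \varepsilon$ for $\lambda$ near $\frac{1}{2}$; combined with $x_2^{\lambda} > 1$ (Lemma \ref{2106}), this yields $\lim_{\lambda \to \frac{1}{2}} x_2^{\lambda} = 1$.

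For the uniqueness of the zero, I would set $\bar{\lambda}_2 := \min\{\lambda \in (0, \frac{1}{2}) : f_x(\lambda) = 0\}$, which exists by the above boundary computations and continuity and satisfies $f_x < 0$ on $(0, \bar{\lambda}_2)$. Assuming for contradiction that $f_x > 0$ fails somewhere on $(\bar{\lambda}_2, \frac{1}{2})$, the $+\infty$ limit at $\frac{1}{2}$ forces the existence of $\lambda^* \in (\bar{\lambda}_2, \frac{1}{2})$ with $f_x(\lambda^*) \leq 0$ and $f_x'(\lambda^*) = 0$, and one then aims to contradict these two conditions by the integral-sign manipulation of Step 2 in the proof of Lemma \ref{2303}, using $\ln(t + x)$ as the critical multiplier. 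The main obstacle is precisely this sign analysis: since $x > 1$, the sign change of $\ln(t + x)$ at $t = 1 - x$ now sits on the negative half-line and interacts with the sign change of $t^2 - 1$ at $t = -1$, so one must split cases according to whether $1 - x \in (-x, -1)$ (i.e.\ $x > 2$) or $1 - x \in (-1, 0)$ (i.e.\ $1 < x \leq 2$). On each piece of the resulting partition of $(-x, +\infty)$ the integrand of $f_x(\lambda^*)$ must be compared to that of $f_x'(\lambda^*)$ via a mean-value estimate on $\ln(t + x)$, in the spirit of (\ref{0205})--(\ref{16}); the chief bookkeeping task is verifying that these estimates combine to contradict $f_x'(\lambda^*) = 0$ in both cases.
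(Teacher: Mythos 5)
Your boundary-limit computations, the deduction of monotonicity of $x_{2}^{\lambda}$ from the sign pattern (\ref{62802}) via Lemma \ref{2106}, and the reduction of everything to the uniqueness of the zero of $\lambda\mapsto f_{x}(\lambda)$ all match the paper. The gap is exactly where you flag "the chief bookkeeping task": the uniqueness step. You propose to transplant the contradiction argument of Lemma \ref{2303} --- produce $\lambda^{*}$ with $f_{x}(\lambda^{*})\le 0$ and $f_{x}'(\lambda^{*})=0$, then refute it by sign estimates on $\int \ln(t+x)\,(t+x)^{-2\lambda^{*}}(t^{2}-1)e^{-t^{2}/2}\,dt$ in the spirit of (\ref{0205})--(\ref{16}). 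For $x\in(-1,0)$ that argument works because $\ln(t+x)$ changes sign at $t=1-x>1$, i.e.\ inside the single region where $t^{2}-1>0$ on the right, and the negative part of the integrand is confined to $(-x,1)$ where $\ln(t+x)$ is uniformly below $\ln(1+x)<0$. For $x>1$ the weight $(t+x)^{-2\lambda}$ has its singularity at $t=-x<-1$, sitting in a region where $t^{2}-1>0$, the sign change of $\ln(t+x)$ at $t=1-x$ lands either inside $(-x,-1)$ or inside $(-1,1)$ depending on whether $x>2$, and the resulting four- or five-piece decomposition no longer admits a single mean-value comparison that forces $f_{x}'(\lambda^{*})>0$ from $f_{x}(\lambda^{*})\le 0$. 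You have not exhibited the estimates, and there is no evidence they close; this is the heart of the lemma, not bookkeeping.

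The paper in fact abandons the direct route here and does two different things. First, it replaces the multiplier $\ln(t+x)$ by the quadratic-in-$\ln y$ multiplier $\ln\frac{y}{x-1}\ln\frac{y}{x+1}$, whose zeros $y=x\mp 1$ coincide with those of $(y-x)^{2}-1$, so the product is pointwise nonnegative; this yields the differential inequality $\frac{1}{4}f_{x}''+\frac{1}{2}\ln(x^{2}-1)f_{x}'+\ln(x-1)\ln(x+1)f_{x}>0$ of (\ref{3002}) (note this multiplier is only available for $x>1$, which is why Lemma \ref{2303} needed a different argument). But this inequality alone only shows that no zero of $f_{x}$ is a local maximum; it does not by itself exclude several transversal zeros. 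So, second, the paper introduces the extremal zero functions $\lambda_{1}(x)=\max\{\lambda:f_{x}(\lambda)=0\}$ and $\lambda_{2}(x)=\min\{\lambda:f_{x}(\lambda)=0\}$, proves both are strictly decreasing and continuous on $(1,+\infty)$ (left-continuity of $\lambda_{2}$ is where (\ref{3002}) is used) with common limits $\frac12$ at $x\to 1$ and $0$ at $x\to+\infty$, and then, for each level $\lambda'$, uses the uniqueness in Lemma \ref{2106} of the zero of $x\mapsto\varphi_{\lambda'}''(x)$ on $(1,+\infty)$ to force the preimages under $\lambda_{1}$ and $\lambda_{2}$ to coincide, hence $\lambda_{1}\equiv\lambda_{2}$ and the zero of $f_{x}$ is unique. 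Neither the nonnegative-integrand multiplier nor this global first-zero/last-zero argument appears in your plan, and without one of them (or a completed case analysis replacing them) the proof is incomplete.
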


\begin{proof}
The continuity of $x_{2}^{\lambda}$ in $\lambda \in \left(  0,\frac{1}%
{2}\right)  $ is obvious.

Let $x\in \left(  1,+\infty \right)  $ be fixed.
Similarly to the proof of Lemma \ref{2303}, by Lemma \ref{2106}, we only need
to show that there exists a unique $\tilde{\lambda}_{2}\in \left(  0,\frac
{1}{2}\right)  $ such that%
\begin{equation}
\left \{
\begin{array}
[c]{l}%
f_{x}\left(  \tilde{\lambda}_{2}\right)  =0,\\
f_{x}\left(  \lambda \right)  <0\text{ for }\lambda \in \left(  0,\tilde{\lambda
}_{2}\right)  ,\\
f_{x}\left(  \lambda \right)  >0\text{ for }\lambda \in \left(  \tilde{\lambda
}_{2},\frac{1}{2}\right)  ,
\end{array}
\right.  \label{62802}%
\end{equation}
where $f_{x}$ is defined by (\ref{0805}) for any fixed $x\in \left(  1,+\infty \right)  $. The proof of (\ref{62802}) is
divided into two steps.

\textbf{Step 1:} First we prove that $\lim_{\lambda \rightarrow0}f_{x}\left(
\lambda \right)  <0$ and $\lim_{\lambda \rightarrow \frac{1}{2}}f_{x}\left(
\lambda \right)  =+\infty$.

By dominated convergence theorem, it is obvious
that
\begin{equation}
\lim_{\lambda \rightarrow0}f_{x}\left(  \lambda \right)  =\int_{0}^{+\infty
}\left[  \left(  y-x\right)  ^{2}-1\right]  \exp \left \{  -\frac{\left(
y-x\right)  ^{2}}{2}\right \}  dy=-x\cdot \exp \left \{  -\frac{x^{2}}{2}\right \}
<0. \label{62803}%
\end{equation}
As for the case when $\lambda \rightarrow \frac{1}{2}$, by substitution $t=y-x$,
we have%
\begin{equation}%
\begin{array}
[c]{rl}
& \lim_{\lambda \rightarrow \frac{1}{2}}f_{x}\left(  \lambda \right)  \\
= & \lim_{\lambda \rightarrow \frac{1}{2}}\int_{0}^{+\infty}y^{-2\lambda}\left[
\left(  y-x\right)  ^{2}-1\right]  \exp \left \{  -\frac{\left(  y-x\right)
^{2}}{2}\right \}  dy\\
= & \lim_{\lambda \rightarrow \frac{1}{2}}\int_{-x}^{+\infty}\left(  t+x\right)
^{-2\lambda}\left(  t^{2}-1\right)  \exp \left \{  -\frac{t^{2}}{2}\right \}
dt\\
= & \lim_{\lambda \rightarrow \frac{1}{2}}\int_{-x}^{-1\wedge \left(  1-x\right)
}\left(  t+x\right)  ^{-2\lambda}\left(  t^{2}-1\right)  \exp \left \{
-\frac{t^{2}}{2}\right \}  dt\\
& +\lim_{\lambda \rightarrow \frac{1}{2}}\int_{-1\wedge \left(  1-x\right)
}^{+\infty}\left(  t+x\right)  ^{-2\lambda}\left(  t^{2}-1\right)
\exp \left \{  -\frac{t^{2}}{2}\right \}  dt,
\end{array}
\label{0209}%
\end{equation}
For the first term of (\ref{0209}), by monotone convergence theorem, we can
easily check that%
\[
\lim_{\lambda \rightarrow \frac{1}{2}}\int_{-x}^{-1\wedge \left(  1-x\right)
}\left(  t+x\right)  ^{-2\lambda}\left(  t^{2}-1\right)  \exp \left \{
-\frac{t^{2}}{2}\right \}  dt=\int_{-x}^{-1\wedge \left(  1-x\right)  }\left(
t+x\right)  ^{-1}\left(  t^{2}-1\right)  \exp \left \{  -\frac{t^{2}}%
{2}\right \}  dt=+\infty \text{.}%
\]
Noting that the second term of (\ref{0209})\ converges to a finite number, we
obtain $\lim_{\lambda \rightarrow \frac{1}{2}}f_{x}\left(  \lambda \right)
=+\infty$.

\textbf{Step 2:} We know that there exists a $\tilde{\lambda}_{2}\in \left(
0,\frac{1}{2}\right)  $ such that $f_{x}\left(  \tilde{\lambda}_{2}\right)
=0$ from Step 1 and the continuity of $f_{x}\left(  \cdot \right)  $. Now we
show the uniqueness of $\tilde{\lambda}_{2}$ on $\left(  0,\frac{1}{2}\right)
$, which will be divided into four parts.

\textbf{(i).} First we prove that $\lim_{\lambda \rightarrow \frac{1}{2}}%
x_{2}^{\lambda}=1$ and $\lim_{\lambda \rightarrow0}x_{2}^{\lambda}=+\infty$.
For every $\varepsilon>0$, since $\lim_{\lambda \rightarrow \frac{1}{2}%
}f_{1+\varepsilon}\left(  \lambda \right)  =+\infty$, there exists a
$\lambda_{0}\in \left(  0,\frac{1}{2}\right)  $ such that $f_{1+\varepsilon
}\left(  \lambda \right)  >0$ for $\lambda \in \left(  \lambda_{0},\frac{1}%
{2}\right)  $. Thus $x_{2}^{\lambda}<1+\varepsilon$ by Lemma \ref{2106}. Then
we have $\lim_{\lambda \rightarrow \frac{1}{2}}x_{2}^{\lambda}=1$. The proof of
$\lim_{\lambda \rightarrow0}x_{2}^{\lambda}=+\infty$ is similar.

\textbf{(ii).} Let
\begin{align*}
\lambda_{1}\left(  x\right)   &  =\max \left \{  \lambda \in \left(  0,\frac{1}%
{2}\right)  :f_{x}\left(  \lambda \right)  =0\right \}  ,\\
\lambda_{2}\left(  x\right)   &  =\min \left \{  \lambda \in \left(  0,\frac{1}%
{2}\right)  :f_{x}\left(  \lambda \right)  =0\right \}  .
\end{align*}
Now we prove that $\lambda_{1}\left(  x\right)  $ and $\lambda_{2}\left(
x\right)  $ are strictly decreasing and continuous in $x\in \left(
1,+\infty \right)  $.

For $x^{\prime}\in \left(  x,+\infty \right)  $, noting
that $f_{x}\left[  \lambda_{2}\left(  x\right)  \right]  =0$, we have
$f_{x^{\prime}}\left[  \lambda_{2}\left(  x\right)  \right]  >0$ by Lemma
\ref{2106}. According to Step 1 and the definition of $\lambda_{2}\left(
x^{\prime}\right)  $, we get that $f_{x^{\prime}}\left(  \lambda \right)
\leq0$ for very $\lambda \in \left(  0,\lambda_{2}\left(  x^{\prime}\right)
\right]  $. Thus $\lambda_{2}\left(  x^{\prime}\right)  <\lambda_{2}\left(
x\right)  $. It follows that $\lambda_{2}\left(  x\right)  $ is strictly
decreasing on $\left(  1,+\infty \right)  $.

As for continuity, it is obvious that $\lambda_{2}\left(  x\right)  $ is
right-continuous in $x\in \left(  1,+\infty \right)  $. Now we show the
left-continuity. Noting that, for $\lambda \in \left(  0,\frac{1}{2}\right)  $,
\begin{equation}%
\begin{array}
[c]{ll}
& \int_{0}^{+\infty}\left(  \ln \frac{y}{x-1}\ln \frac{y}{x+1}\right)
y^{-2\lambda}\left[  \left(  y-x\right)  ^{2}-1\right]  \exp \left \{
-\frac{\left(  y-x\right)  ^{2}}{2}\right \}  dy\\
= & \frac{1}{4}f_{x}^{\prime \prime}\left(  \lambda \right)  +\frac{1}{2}%
\ln \left(  x^{2}-1\right)  f_{x}^{\prime}\left(  \lambda \right)  +\ln \left(
x-1\right)  \ln \left(  x+1\right)  f_{x}\left(  \lambda \right)  \\
> & 0,
\end{array}
\label{3002}%
\end{equation}
we assert that, for every $\varepsilon>0$, there exists a $\mu \in \left(
\lambda_{2}\left(  x\right)  ,\lambda_{2}\left(  x\right)  +\varepsilon
\right)  $ such that $f_{x}\left(  \mu \right)  >0$. Otherwise, $\lambda
_{2}\left(  x\right)  $ is the local maximum point of $f_{x}\left(
\cdot \right)  $. Then $f_{x}\left[  \lambda_{2}\left(  x\right)  \right]  =0$,
$f_{x}^{\prime}\left[  \lambda_{2}\left(  x\right)  \right]  =0$ and
$f_{x}^{\prime \prime}\left[  \lambda_{2}\left(  x\right)  \right]  \leq0$,
which contradicts (\ref{3002}).\ According to Lemma \ref{2106}, there exists
$\tilde{x}^{\prime}\in \left(  1,x\right)  $ such that $f_{\tilde{x}^{\prime}%
}\left(  \mu \right)  =0$. It follows that $\lambda_{2}\left(  \tilde
{x}^{\prime}\right)  \leq \mu$ by the definition of $\lambda_{2}\left(
x\right)  $. Since $\lambda_{2}\left(  x\right)  $ is decreasing in\textbf{
}$x\in \left(  1,+\infty \right)  $, we have $\lambda_{2}\left(
x\right)  <\lambda_{2}\left(  \tilde{x}^{\prime}\right)  \leq \mu$, which
implies the left-continuity of $\lambda_{2}\left(  x\right)  $ in $x\in \left(
1,+\infty \right)  $. Consequently, $\lambda_{2}\left(  x\right)  $ is
continuous in $x\in \left(  1,+\infty \right)  $.

The proof for $\lambda
_{1}\left(  x\right)  $ is similar.

\textbf{(iii).} We next show that
\[
\lim_{x\rightarrow1}\lambda_{i}\left(  x\right)  =\frac{1}{2},\text{ }%
\lim_{x\rightarrow+\infty}\lambda_{i}\left(  x\right)  =0,\text{ }i=1,2.
\]
Since $\lim_{\lambda \rightarrow \frac{1}{2}}x_{2}^{\lambda}=1$, it is easily
seen that $\lim_{x\rightarrow1}\lambda_{1}\left(  x\right)  =\frac{1}{2}$. Set
$\lim_{x\rightarrow+\infty}\lambda_{1}\left(  x\right)  :=\lambda_{1}^{\ast
}\geq0$. Since $f_{x}\left[  \lambda_{1}\left(  x\right)  \right]  =0$, for
each fixed $x_{0}\in \left(  1,x\right)  $, we have $f_{x_{0}}\left[
\lambda_{1}\left(  x\right)  \right]  <0$ according to Lemma \ref{2106}.
Suppose $\lambda_{1}^{\ast}>0$, then we have $f_{x_{0}}\left(  \lambda
_{1}^{\ast}\right)  \leq0$ for $x_{0}\in \left(  1,+\infty \right)  $ by letting
$x\rightarrow+\infty$, which contradicts Lemma \ref{2106}. Therefore
$\lambda_{1}^{\ast}=0$. Similarly we can prove that $\lim_{x\rightarrow
1}\lambda_{2}\left(  x\right)  =\frac{1}{2}$ and $\lim_{x\rightarrow+\infty
}\lambda_{2}\left(  x\right)  =0$.

\textbf{(iv).} Finally, noting that $\lambda_{1}\left(  x\right)  \geq
\lambda_{2}\left(  x\right)  $ for every $x\in \left(  1,+\infty \right)  $, by (ii) and (iii), we know that for every $\lambda^{\prime}%
\in \left(  0,\frac{1}{2}\right)  $ there exist $x_{1},x_{2}\in \left(
1,+\infty \right)  $ satisfying $x_{1}\geq x_{2}$ such that $\lambda_{2}\left(
x_{2}\right)  =\lambda_{1}\left(  x_{1}\right)  =\lambda^{\prime}$. It follows
that%
\[
f_{x_{i}}\left[  \lambda_{i}\left(  x_{i}\right)  \right]  =f_{x_{i}}\left(
\lambda^{\prime}\right)  =0,\text{ }i=1,2.
\]
According to Lemma \ref{2106}, we have $x_{1}=x_{2}$, which implies
$\lambda_{1}\left(  x\right)  =\lambda_{2}\left(  x\right)  $ by the
arbitrariness of $\lambda^{\prime}\in \left(  0,\frac{1}{2}\right)  $. Then the
uniqueness of $\tilde{\lambda}_{2}\in \left(  0,\frac{1}{2}\right)  $ is proved.

Consequently, we conclude that $x_{2}^{\lambda}$ is decreasing in $\lambda
\in \left(  0,\frac{1}{2}\right)  $.
\end{proof}

\begin{lemma}
\label{2503}Let $z_{2}^{\lambda,k}$ be defined by (\ref{13}). Then for every
$\lambda \in \left(  0,\frac{1}{2}\right)  $ and $k\in \left[  1,+\infty \right)
$, $z_{2}^{\lambda,k}>1$.
\end{lemma}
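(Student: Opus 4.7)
The plan is to reduce the claim $z_{2}^{\lambda,k}>1$ to a pointwise negativity bound. By the definition in (\ref{13}), $z_{2}^{\lambda,k}$ is the smallest point of $(-x_{1}^{\lambda},x_{2}^{\lambda})$ at which $\Psi_{\lambda,k}^{\prime \prime}$ vanishes, so it suffices to show that $\Psi_{\lambda,k}^{\prime \prime}(x)<0$ for every $x\in[-x_{1}^{\lambda},1]$. Recall from Lemma~\ref{2106} that $x_{1}^{\lambda}\in(-1,0)$ and $x_{2}^{\lambda}>1$; hence $[-x_{1}^{\lambda},1]\subset(x_{1}^{\lambda},x_{2}^{\lambda})$ and $\varphi_{\lambda}^{\prime \prime}$ is strictly negative on this whole interval.

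The key observation is the decomposition
\[
\Psi_{\lambda,k}^{\prime \prime}(x)=k\varphi_{\lambda}^{\prime \prime}(x)+\varphi_{\lambda}^{\prime \prime}(-x)=\bigl[\varphi_{\lambda}^{\prime \prime}(x)+\varphi_{\lambda}^{\prime \prime}(-x)\bigr]+(k-1)\varphi_{\lambda}^{\prime \prime}(x),
\]
which splits $\Psi_{\lambda,k}^{\prime \prime}$ into the even combination analyzed in Remark~\ref{281} and a correction that vanishes in the extremal case $k=1$. By Remark~\ref{281}, the first bracket is strictly negative whenever $|x|<z^{\lambda}$; by Lemma~\ref{1801}, $z^{\lambda}>1$, and on $[-x_{1}^{\lambda},1]$ we have $|x|\le 1<z^{\lambda}$, so the bracket is strictly negative. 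The second term $(k-1)\varphi_{\lambda}^{\prime \prime}(x)$ is non-positive, because $k\ge 1$ and $\varphi_{\lambda}^{\prime \prime}(x)<0$ on $(x_{1}^{\lambda},x_{2}^{\lambda})$ by Lemma~\ref{2106}. Adding a strictly negative quantity and a non-positive one gives $\Psi_{\lambda,k}^{\prime \prime}(x)<0$ on $[-x_{1}^{\lambda},1]$, hence $z_{2}^{\lambda,k}>1$.

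I do not expect any serious obstacle: the heavy lifting has already been done in Lemmas~\ref{2106} and~\ref{1801} and in Remark~\ref{281}. The only things to verify are the uniform inclusions $x_{1}^{\lambda}\in(-1,0)$, $x_{2}^{\lambda}>1$, and $z^{\lambda}>1$ for all $\lambda\in(0,\tfrac{1}{2})$, each of which is an immediate consequence of the cited results. The decomposition above then automatically delivers strict negativity for every $k\in[1,+\infty)$, yielding the lemma.
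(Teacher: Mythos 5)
Your decomposition
\[
\Psi_{\lambda,k}^{\prime \prime}(x)=\bigl[\varphi_{\lambda}^{\prime \prime}(x)+\varphi_{\lambda}^{\prime \prime}(-x)\bigr]+(k-1)\varphi_{\lambda}^{\prime \prime}(x)
\]
is exactly the one the paper uses; the paper simply evaluates it at the single point $x=1$ and then invokes the sign characterization of Lemma \ref{2107} to conclude $1\in\left(z_{1}^{\lambda,k},z_{2}^{\lambda,k}\right)$, whereas you establish negativity on all of $\left[-x_{1}^{\lambda},1\right]$. That extra work is harmless. The treatment of the term $(k-1)\varphi_{\lambda}^{\prime\prime}(x)$ via Lemma \ref{2106} is also fine.

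However, there is a genuine circularity in how you handle the even bracket. You justify $\varphi_{\lambda}^{\prime\prime}(x)+\varphi_{\lambda}^{\prime\prime}(-x)<0$ on $\left[-x_{1}^{\lambda},1\right]$ by combining Remark \ref{281} with the inequality $z^{\lambda}>1$, which you attribute to Lemma \ref{1801}. But Lemma \ref{2503} is stated and proved inside the proof of Lemma \ref{1801} precisely because that proof needs $z_{2}^{\lambda,1+\delta}>1$ and $z^{\lambda}>1$ as inputs: the paper derives $z^{\lambda}>1$ \emph{from} Lemma \ref{2503} (the case $k=1$). So invoking Lemma \ref{1801} here assumes the conclusion you are trying to prove. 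The fix is easy and is what the paper actually does: the negativity of the even bracket at $x=1$, namely $\varphi_{\lambda}^{\prime\prime}(1)+\varphi_{\lambda}^{\prime\prime}(-1)<0$, is inequality (\ref{2801}), established independently in Step 1 of the proof of Lemma \ref{2106} (via the functions $g$ and $h$ there). Citing (\ref{2801}) directly — either just at $x=1$ as the paper does, or combined with Remark \ref{281} to get $z^{\lambda}>1$ and hence negativity of the bracket on the whole interval — removes the circularity and makes your argument correct.
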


\begin{proof}
By (\ref{2801}) in Subsection \ref{0801}, we know that $\varphi_{\lambda
}^{\prime \prime}\left(  1\right)  +\varphi_{\lambda}^{\prime \prime}\left(
-1\right)  <0$, and by Lemma \ref{2106} we have $\varphi_{\lambda}%
^{\prime \prime}\left(  1\right)  <0$. Thus, for $k>1$,
\[
\Psi_{\lambda,k}^{\prime \prime}\left(  1\right)  =k\varphi_{\lambda}%
^{\prime \prime}\left(  1\right)  +\varphi_{\lambda}^{\prime \prime}\left(
-1\right)  =\left(  k-1\right)  \varphi_{\lambda}^{\prime \prime}\left(
1\right)  +\left[  \varphi_{\lambda}^{\prime \prime}\left(  1\right)
+\varphi_{\lambda}^{\prime \prime}\left(  -1\right)  \right]  <0\text{.}%
\]
which yields $z_{2}^{\lambda,k}>1$ according to Lemma \ref{2107}.
\end{proof}

Now we prove Lemma \ref{1801}. The continuity of $z^{\lambda}$ in $\lambda
\in \left(  0,\frac{1}{2}\right)  $ is obvious.

For each fixed $x\in \left(  1,+\infty \right)  $, define%
\[
F_{x}\left(  \lambda \right)  :=\varphi_{\lambda}^{\prime \prime}\left(
x\right)  +\varphi_{\lambda}^{\prime \prime}\left(  -x\right)  =f_{x}\left(
\lambda \right)  +g_{x}\left(  \lambda \right)  \text{,}%
\]
where $f_{x}\left(  \lambda \right)  $ is defined by (\ref{0805}), and
\[
g_{x}\left(  \lambda \right)  :=\int_{0}^{+\infty}y^{-2\lambda}\left[  \left(
y+x\right)  ^{2}-1\right]  \exp \left \{  -\frac{\left(  y+x\right)  ^{2}}%
{2}\right \}  dy\text{.}%
\]
And for each $\delta>0$, define%
\begin{equation}
F_{x}^{\delta}\left(  \lambda \right)  :=\Psi_{\lambda,1+\delta}^{\prime \prime
}\left(  x\right)  =\left(  1+\delta \right)  f_{x}\left(  \lambda \right)
+g_{x}\left(  \lambda \right)  ,\label{1902}%
\end{equation}
where $\Psi_{\lambda,1+\delta}$ is defined by (\ref{12}). By Lemma \ref{2503},
we have $z_{2}^{\lambda,1+\delta}>1$ and $z^{\lambda}>1$, where $z_{2}%
^{\lambda,1+\delta}$ is defined by (\ref{13}) and $z^{\lambda}$ is defined by
Notation \ref{3.2}.

We first show that $z_{2}^{\lambda,1+\delta}$ is decreasing in $\lambda
\in \left(  0,\frac{1}{2}\right)  $ and $\lim_{\lambda \rightarrow \frac{1}{2}%
}z_{2}^{\lambda,1+\delta}=1$ for every fixed $\delta>0$. Similarly to the
proof of Lemma \ref{2303}, by Lemma \ref{2107}, we only need to show that
there exists a unique $\lambda_{\delta}\in \left(  0,\frac{1}{2}\right)  $ such
that
\[
\left \{
\begin{array}
[c]{l}%
F_{x}^{\delta}\left(  \lambda_{\delta}\right)  =0\text{,}\\
F_{x}^{\delta}\left(  \lambda \right)  <0\text{ for }\lambda \in \left(
0,\lambda_{\delta}\right)  \text{,}\\
F_{x}^{\delta}\left(  \lambda \right)  >0\text{ for }\lambda \in \left(
\lambda_{\delta},\frac{1}{2}\right)  \text{.}%
\end{array}
\right.
\]
The proof is divided into two steps.

\textbf{Step 1:} We prove that $\lim_{\lambda \rightarrow0}F_{x}^{\delta
}\left(  \lambda \right)  <0$ and $\lim_{\lambda \rightarrow \frac{1}{2}}%
F_{x}^{\delta}\left(  \lambda \right)  =+\infty$. By dominated convergence
theorem, it is easily seen that%
\begin{equation}%
\begin{array}
[c]{ll}
& \lim_{\lambda \rightarrow0}F_{x}^{\delta}\left(  \lambda \right) \\
= & \left(  1+\delta \right)  \int_{0}^{+\infty}\left[  \left(  y-x\right)
^{2}-1\right]  \exp \left \{  -\frac{\left(  y-x\right)  ^{2}}{2}\right \}
dy+\int_{0}^{+\infty}\left[  \left(  y+x\right)  ^{2}-1\right]  \exp \left \{
-\frac{\left(  y+x\right)  ^{2}}{2}\right \}  dy\\
= & \left(  1+\delta \right)  \allowbreak \left(  -x\exp \left \{  -\frac{x^{2}%
}{2}\right \}  \right)  +x\exp \left \{  -\frac{x^{2}}{2}\right \}  <0\text{.}%
\end{array}
\label{62805}%
\end{equation}
As for the case when $\lambda \rightarrow \frac{1}{2}$,%
\begin{equation}
\lim_{\lambda \rightarrow \frac{1}{2}}F_{x}^{\delta}\left(  \lambda \right)
=\left(  1+\delta \right)  \lim_{\lambda \rightarrow \frac{1}{2}}f_{x}\left(
\lambda \right)  +\lim_{\lambda \rightarrow \frac{1}{2}}g_{x}\left(
\lambda \right)  \text{.} \label{0208}%
\end{equation}
The second term is positive which is obvious, and the first term diverges to
$+\infty$ by the Step 1 in the proof of Lemma \ref{61405}. Thus $\lim
_{\lambda \rightarrow \frac{1}{2}}F_{x}^{\delta}\left(  \lambda \right)
=+\infty$.

\textbf{Step 2:} We know that there exists $\tilde{\lambda}_{\delta}\in \left(
0,\frac{1}{2}\right)  $ such that $F_{x}^{\delta}\left(  \tilde{\lambda
}_{\delta}\right)  =0$ by Step 1 and the continuity of the function, now we
show the uniqueness of $\tilde{\lambda}_{\delta}$ on $\left(  0,\frac{1}%
{2}\right)  $. The proof is divided into two parts.

\textbf{(i).} We first prove the uniqueness of $\tilde{\lambda}_{\delta}$ for
$x\in \left[  2,+\infty \right)  $.

By simple calculation, we have%
\[%
\begin{array}
[c]{l}%
f_{x}^{\prime}\left(  \lambda \right)  =-2\int_{0}^{+\infty}\left(  \ln
y\right)  y^{-2\lambda}\left[  \left(  y-x\right)  ^{2}-1\right]  \exp \left \{
-\frac{\left(  y-x\right)  ^{2}}{2}\right \}  dy,\\
f_{x}^{\prime \prime}\left(  \lambda \right)  =4\int_{0}^{+\infty}\left(  \ln
y\right)  ^{2}y^{-2\lambda}\left[  \left(  y-x\right)  ^{2}-1\right]
\exp \left \{  -\frac{\left(  y-x\right)  ^{2}}{2}\right \}  dy,
\end{array}
\]
and%
\[%
\begin{array}
[c]{l}%
g_{x}^{\prime}\left(  \lambda \right)  =-2\int_{0}^{+\infty}\left(  \ln
y\right)  y^{-2\lambda}\left[  \left(  y+x\right)  ^{2}-1\right]  \exp \left \{
-\frac{\left(  y+x\right)  ^{2}}{2}\right \}  dy,\\
g_{x}^{\prime \prime}\left(  \lambda \right)  =4\int_{0}^{+\infty}\left(  \ln
y\right)  ^{2}y^{-2\lambda}\left[  \left(  y+x\right)  ^{2}-1\right]
\exp \left \{  -\frac{\left(  y+x\right)  ^{2}}{2}\right \}  dy.
\end{array}
\]

For $g\left( \lambda \right)$, it is obvious that $g_{x}^{\prime \prime}\left(  \lambda \right)  >0$ for every
$\lambda \in \left(  0,\frac{1}{2}\right)  $. Now we show that $g_{x}^{\prime
}\left(  0\right)  >0$. It is clear that%
\begin{equation}%
\begin{array}
[c]{cl}
& \int_{0}^{1}\left(  \ln y\right)  \left[  \left(  y+x\right)  ^{2}-1\right]
\exp \left \{  -\frac{\left(  y+x\right)  ^{2}}{2}\right \}  dy\\
\leq & \left[  \left(  x+1\right)  ^{2}-1\right]  \exp \left \{  -\frac{\left(
x+1\right)  ^{2}}{2}\right \}  \int_{0}^{1}\ln ydy=-\left[  \left(  x+1\right)
^{2}-1\right]  \exp \left \{  -\frac{\left(  x+1\right)  ^{2}}{2}\right \}  .
\end{array}
\label{61201}%
\end{equation}
Noting that $\max_{y\in \left(  1,+\infty \right)  }\left[  \left(  \ln
y\right)  \exp \left \{  -\frac{\left(  y+x\right)  ^{2}}{4}\right \}  \right]
=\left(  \ln y^{\ast}\right)  \exp \left \{  -\frac{\left(  y^{\ast}+x\right)
^{2}}{4}\right \}  $, where $y^{\ast}\in \left(  1,+\infty \right)  $ satisfies
\begin{equation}
\left[  \frac{1}{y^{\ast}}-\frac{y^{\ast}+x}{2}\left(  \ln y^{\ast}\right)
\right]  \exp \left \{  -\frac{\left(  y^{\ast}+x\right)  ^{2}}{4}\right \}  =0,
\end{equation}
we can easily check that%
\[%
\begin{array}
[c]{rl}
& \int_{1}^{+\infty}\left(  \ln y\right)  \left[  \left(  y+x\right)
^{2}-1\right]  \exp \left \{  -\frac{\left(  y+x\right)  ^{2}}{2}\right \}  dy\\
= & \int_{1}^{+\infty}\left(  \ln y\right)  \exp \left \{  -\frac{\left(
y+x\right)  ^{2}}{4}\right \}  \cdot \left[  \left(  y+x\right)  ^{2}-1\right]
\exp \left \{  -\frac{\left(  y+x\right)  ^{2}}{4}\right \}  dy\\
\leq & \int_{1}^{+\infty}\left(  \ln y^{\ast}\right)  \exp \left \{
-\frac{\left(  y^{\ast}+x\right)  ^{2}}{4}\right \}  \cdot \left[  \left(
y+x\right)  ^{2}-1\right]  \exp \left \{  -\frac{\left(  y+x\right)  ^{2}}%
{4}\right \}  dy\\
= & \int_{1}^{+\infty}\frac{2}{y^{\ast}\left(  y^{\ast}+x\right)  }%
\exp \left \{  -\frac{\left(  y^{\ast}+x\right)  ^{2}}{4}\right \}  \cdot \left[
\left(  y+x\right)  ^{2}-1\right]  \exp \left \{  -\frac{\left(  y+x\right)
^{2}}{4}\right \}  dy\\
\leq & \frac{2}{1+x}\exp \left \{  -\frac{\left(  1+x\right)  ^{2}}{4}\right \}
\int_{1}^{+\infty}\left[  \left(  y+x\right)  ^{2}-1\right]  \exp \left \{
-\frac{\left(  y+x\right)  ^{2}}{4}\right \}  dy.
\end{array}
\]
Then by substitution $t=y+x$ and integration by parts, we have%
\[%
\begin{array}
[c]{rl}
& \frac{2}{1+x}\exp \left \{  -\frac{\left(  1+x\right)  ^{2}}{4}\right \}
\int_{1}^{+\infty}\left[  \left(  y+x\right)  ^{2}-1\right]  \exp \left \{
-\frac{\left(  y+x\right)  ^{2}}{4}\right \}  dy\\
= & \frac{2}{1+x}\exp \left \{  -\frac{\left(  1+x\right)  ^{2}}{4}\right \}
\left[  2\left(  1+x\right)  \exp \left \{  -\frac{\left(  1+x\right)  ^{2}}%
{4}\right \}  +\int_{1+x}^{+\infty}\exp \left \{  -\frac{t^{2}}{4}\right \}
dt\right] \\
\leq & \frac{2}{1+x}\exp \left \{  -\frac{\left(  1+x\right)  ^{2}}{4}\right \}
\left[  2\left(  1+x\right)  \exp \left \{  -\frac{\left(  1+x\right)  ^{2}}%
{4}\right \}  +\int_{1+x}^{+\infty}\frac{t}{1+x}\exp \left \{  -\frac{t^{2}}%
{4}\right \}  dt\right] \\
= & \frac{2}{1+x}\exp \left \{  -\frac{\left(  1+x\right)  ^{2}}{4}\right \}
\left[  2\left(  1+x\right)  \exp \left \{  -\frac{\left(  1+x\right)  ^{2}}%
{4}\right \}  +\frac{2}{1+x}\exp \left \{  -\frac{\left(  1+x\right)  ^{2}}%
{4}\right \}  \right] \\
= & \frac{4}{1+x}\left(  1+x+\frac{1}{1+x}\right)  \exp \left \{  -\frac{\left(
1+x\right)  ^{2}}{2}\right \}  ,
\end{array}
\]
which implies
\begin{equation}
\int_{1}^{+\infty}\left(  \ln y\right)  \left[  \left(  y+x\right)
^{2}-1\right]  \exp \left \{  -\frac{\left(  y+x\right)  ^{2}}{2}\right \}
dy\leq \frac{4}{1+x}\left(  1+x+\frac{1}{1+x}\right)  \exp \left \{
-\frac{\left(  1+x\right)  ^{2}}{2}\right \}  . \label{61202}%
\end{equation}
Thus, by (\ref{61201}) and (\ref{61202}), we obtain%
\[
\int_{0}^{+\infty}\left(  \ln y\right)  \left[  \left(  y+x\right)
^{2}-1\right]  \exp \left \{  -\frac{\left(  y+x\right)  ^{2}}{2}\right \}
dy\leq \left[  4+\frac{4}{\left(  x+1\right)  ^{2}}-\left(  x+1\right)
^{2}+1\right]  \exp \left \{  -\frac{\left(  1+x\right)  ^{2}}{2}\right \}  <0,
\]
which implies that $g_{x}^{\prime}\left(  0\right)  >0$. Therefore,
$g_{x}\left(  \lambda \right)  $ is convex and strictly increasing on $\left(
0,\frac{1}{2}\right)  $.

As for $f_{x}\left(  \lambda \right)  $, noting that%
\begin{equation}%
\begin{array}
[c]{rl}%
0< & \int_{0}^{+\infty}\left(  \ln \frac{y}{x-1}\right)  \left(  \ln \frac
{y}{x+1}\right)  y^{-2\lambda}\left[  \left(  y-x\right)  ^{2}-1\right]
\exp \left \{  -\frac{\left(  y-x\right)  ^{2}}{2}\right \}  dy\\
= & \frac{1}{4}f_{x}^{\prime \prime}\left(  \lambda \right)  +\frac{1}{2}%
\ln \left(  x^{2}-1\right)  f_{x}^{\prime}\left(  \lambda \right)  +\ln \left(
x-1\right)  \ln \left(  x+1\right)  f_{x}\left(  \lambda \right)  ,
\end{array}
\label{61204}%
\end{equation}
we consider the\ following two cases.

\textbf{Case 1:} If $\lim_{\lambda \rightarrow0}f_{x}^{\prime}\left(  \lambda \right)
>0$, we claim that $f_{x}^{\prime}\left(  \lambda \right)  >0$ for every
$\lambda \in \left(  0,\tilde{\lambda}_{2}\right)  $, where $\tilde{\lambda}%
_{2}\in \left(  0,\frac{1}{2}\right)  $ satisfies (\ref{62802}). Otherwise,
\[
\tilde{\lambda}:=\min \left \{  \lambda \in \left(  0,\tilde{\lambda}_{2}\right)
;\,f_{x}^{\prime}\left(  \lambda \right)  =0\right \}
\]
exists. Then by (\ref{61204}), (\ref{62802}) and the definition of $ \tilde{\lambda}$, we have $f_{x}%
^{\prime \prime}\left(  \tilde{\lambda}\right)  >0$, which contradicts the fact
that $f_{x}^{\prime}\left(  \lambda \right)  >0$ for $\lambda<\tilde{\lambda}$.
Consequently, $F_{x}^{\delta}\left(  \lambda \right)  =(1+\delta)f_{x}\left(
\lambda \right)  +g_{x}\left(  \lambda \right)  $ is increasing in $\lambda
\in \left(  0,\tilde{\lambda}_{2}\right)  $. Obviously, $F_{x}^{\delta}\left(
\lambda \right)  >0$ for $\lambda \in \left(  \tilde{\lambda}_{2},\frac{1}%
{2}\right)  $. Since $\lim_{\lambda \rightarrow0}F_{x}^{\delta}\left(
\lambda \right)  <0$, we conclude that there exists a unique $\lambda_{\delta
}\in \left(  0,\frac{1}{2}\right)  $ such that $F_{x}^{\delta}\left(
\lambda_{\delta}\right)  =0$.

\textbf{Case 2:} If $\lim_{\lambda \rightarrow0}f_{x}^{\prime}\left(  \lambda \right)
\leq0$, we have $\lim_{\lambda \rightarrow0}f_{x}^{\prime \prime}\left(
\lambda \right)  >0$ by (\ref{62803}) and (\ref{61204}). According to the proof
of Lemma \ref{61405}, we know that\
\[
\hat{\lambda}:=\min \left \{  \lambda \in \left(  0,\tilde{\lambda}_{2}\right)
;\text{ }f_{x}^{\prime}\left(  \lambda \right)  =0\right \}
\]
exists. By (\ref{61204}) we have $f_{x}^{\prime \prime}\left(  \lambda \right)
>0$ for $\lambda \in \left(  0,\hat{\lambda}\right]  $. For $\lambda \in \left(
\hat{\lambda},\tilde{\lambda}_{2}\right)  $, by similar analysis to Case 1, we
have $f_{x}^{\prime}\left(  \lambda \right)  >0$. Thus $F_{x}^{\delta}\left(
\lambda \right)  =(1+\delta)f_{x}\left(  \lambda \right)  +g_{x}\left(
\lambda \right)  $ is convex on $\left(  0,\hat{\lambda}\right)  $, increasing
in $\lambda \in \left(  \hat{\lambda},\tilde{\lambda}_{2}\right)  $, and
positive for $\lambda \in \left(  \tilde{\lambda}_{2},\frac{1}{2}\right)  $.
Consequently, there exists a unique $\lambda_{\delta}\in \left(  0,\frac{1}%
{2}\right)  $ such that $F_{x}^{\delta}\left(  \lambda_{\delta}\right)  =0$.

The proof of the uniqueness of $\tilde{\lambda}_{\delta}$ for $x\in \left[
2,+\infty \right)  $ is complete.

\bigskip

\textbf{(ii).} For $x\in \left(  1,2\right)  $, the uniqueness of
$\tilde{\lambda}_{\delta}$ can be proved by the same method as Step 2 in the
proof of Lemma \ref{61405}, the only difference being the proof of the
continuity of the following functions:%
\begin{equation}%
\begin{array}
[c]{c}%
\lambda_{1}^{\delta}\left(  x\right)  =\max \left \{  \lambda \in \left(
0,\frac{1}{2}\right)  :F_{x}^{\delta}\left(  \lambda \right)  =0\right \}
\text{,}\\
\lambda_{2}^{\delta}\left(  x\right)  =\min \left \{  \lambda \in \left(
0,\frac{1}{2}\right)  :F_{x}^{\delta}\left(  \lambda \right)  =0\right \}
\text{,}%
\end{array}
\label{61404}%
\end{equation}
which will be provided in Appendix, and then Step 2 is complete.

Hence we have proved that $z_{2}^{\lambda,1+\delta}$ is decreasing in
$\lambda \in \left(  0,\frac{1}{2}\right)  $. Letting $\delta \rightarrow0$, for
every $\lambda \in \left(  0,\frac{1}{2}\right)  $, we have $z^{\lambda}%
=\lim_{\delta \rightarrow0}z_{2}^{\lambda,1+\delta}$. Thus $z^{\lambda}$ is
decreasing in $\lambda \in \left(  0,\frac{1}{2}\right)  $. By Lemma \ref{2107},
we have $-x_{1}^{\lambda}<z^{\lambda}<x_{2}^{\lambda}$ for every $\lambda
\in \left(  0,\frac{1}{2}\right)  $. Thus, by Lemma \ref{2303}, Lemma
\ref{61405} and Lemma \ref{2503}, we have $\lim_{\lambda \rightarrow \frac{1}%
{2}}z^{\lambda}=1$.

\begin{appendix}
\section*{Appendix}
\setcounter{theorem}{0} \setcounter{equation}{0}
\renewcommand{\thetheorem}{A.\arabic{theorem}} \renewcommand{\theequation}{A.\arabic{equation}}
\begin{proposition}
\label{a1}Let $\lambda_{1}^{\delta}\left(  \cdot \right)  $ and $\lambda
_{2}^{\delta}\left(  \cdot \right)  $ be defined by (\ref{61404}). Then
$\lambda_{1}^{\delta}\left(  x\right)  $ and $\lambda_{2}^{\delta}\left(
x\right)  $ are continuous in $x\in \left(  1,2\right)  $.
\end{proposition}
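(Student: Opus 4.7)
The plan is to adapt the left-continuity argument used for $\lambda_2(x)$ in the proof of Lemma \ref{61405}(ii) to the function $F_x^\delta$. Right-continuity of both $\lambda_1^\delta(x)$ and $\lambda_2^\delta(x)$ is immediate from the joint continuity of $F_x^\delta(\lambda)$ in $(x,\lambda)$ combined with the min/max definitions in (\ref{61404}), so the substantive content is the left-continuity.

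For left-continuity of $\lambda_2^\delta(x)$, I would reduce it to the claim that for every $\varepsilon>0$ there exists $\mu \in (\lambda_2^\delta(x), \lambda_2^\delta(x)+\varepsilon)$ with $F_x^\delta(\mu) > 0$. Granted this, Lemma \ref{2107} (strict monotonicity of $\Psi_{\lambda,1+\delta}''$ in $x$ near a zero) together with the intermediate value theorem produces some $\tilde x' \in (1,x)$ with $F_{\tilde x'}^\delta(\mu)=0$, whence $\lambda_2^\delta(\tilde x') \leq \mu$; combining with the (already established) monotonicity of $\lambda_2^\delta$ in $x$ forces $\lambda_2^\delta(x) < \lambda_2^\delta(\tilde x') \leq \mu < \lambda_2^\delta(x)+\varepsilon$. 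The claim itself is proved by contradiction: if no such $\mu$ exists, then $\lambda_2^\delta(x)$ is a local maximum of $F_x^\delta(\cdot)$, so $F_x^\delta=(F_x^\delta)'=0$ and $(F_x^\delta)''\leq 0$ there. This will contradict the target inequality
\[
\tfrac{1}{4}(F_x^\delta)''(\lambda) + \tfrac{1}{2}\ln(x^2-1)(F_x^\delta)'(\lambda) + \ln(x-1)\ln(x+1)\, F_x^\delta(\lambda) > 0,
\]
which is the direct analog of (\ref{3002}) with $F_x^\delta$ in place of $f_x$. The $\lambda_1^\delta(x)$ case is treated symmetrically by ruling out a local minimum of $F_x^\delta$ at $\lambda_1^\delta(x)$.

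The main obstacle is establishing the displayed positivity. Writing $F_x^\delta = (1+\delta)f_x + g_x$, the $f_x$-contribution equals the integral of $(\ln\tfrac{y}{x-1})(\ln\tfrac{y}{x+1})\,y^{-2\lambda}[(y-x)^2-1]e^{-(y-x)^2/2}$, which is pointwise nonnegative because the quadratic in $\ln y$ and the factor $(y-x)^2-1$ share the same sign pattern around the roots $y=x\pm 1$; this is precisely the mechanism underlying (\ref{3002}) and gives a strict positive contribution. The $g_x$-contribution, by contrast, involves $(y+x)^2-1$, which is strictly positive on $(0,\infty)$ for $x>1$, so the integrand $(\ln\tfrac{y}{x-1})(\ln\tfrac{y}{x+1})\,y^{-2\lambda}[(y+x)^2-1]e^{-(y+x)^2/2}$ is negative on the slab $y\in(x-1,x+1)$ and positive outside. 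The work is to quantify that this negative part is strictly dominated by the surrounding positive contribution, using the substitution $t=y+x$ to rewrite the integrand on $(x,\infty)$, the rapid Gaussian decay away from $t=\pm 1$, and the bound $x<2$ to control $\ln(x-1)$ and $\ln(x+1)$; the $(1+\delta)$-weight on the already strictly positive $f_x$-piece gives additional slack. Assembling these estimates yields the required strict inequality and completes the continuity proof.
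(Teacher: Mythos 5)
Your architecture coincides with the paper's: reduce continuity to the claim that $F_x^{\delta}$ has no one--sided--sign critical zero, and derive that from the positivity of $\tfrac{1}{4}(F_x^{\delta})''(\lambda)+\tfrac{1}{2}\ln(x^{2}-1)(F_x^{\delta})'(\lambda)+\ln(x-1)\ln(x+1)F_x^{\delta}(\lambda)$, the analogue of (\ref{3002}). The problem is that you have not proved this positivity, and it is the entire content of the proposition: the paper isolates it as Lemma \ref{28}, whose proof, together with the auxiliary Lemmas \ref{25}--\ref{27}, occupies most of the appendix. Your sketched route does not obviously close the gap. The negative contribution of the $g_x$-integral on the slab $y\in(x-1,x+1)$, where $\ln\frac{y}{x-1}\ln\frac{y}{x+1}<0$ while $(y+x)^{2}-1>0$, carries the Gaussian weight $\exp\{-(y+x)^{2}/2\}$ with $y+x\in(2x-1,2x+1)$; this is of exactly the same order as the weight on the window $y\in(3x-1,3x+1)$ of the $f_x$-integral, so "rapid Gaussian decay away from $t=\pm1$" and the bound $x<2$ do not by themselves dominate it, and the positive tails of the $g_x$-integral alone are not enough either (near $x=1$ the left tail $(0,x-1)$ vanishes). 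The paper has to pair the bad slab against that specific window of the $f_x$-integral (see (\ref{1701})), transport everything to a common interval $t\in(-1,1)$, and then verify the sign of the combined integrand by explicit numerical analysis (the cutoffs $-0.65$, $0.4$, $0.7$, the case split at $x=1.5$, and the monotonicity structure of $l$, $h$, $g$). Nothing in your outline produces these cancellations, and the factor $1+\delta\geq 1$ on the already positive $f_x$-piece is harmless but is not what makes the estimate work; asserting that "assembling these estimates yields the required strict inequality" leaves the hard part undone.

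There is also a logical flaw in your treatment of $\lambda_{1}^{\delta}$. At a critical zero the displayed inequality forces $(F_x^{\delta})''>0$, which rules out a local \emph{maximum} (as needed for $\lambda_{2}^{\delta}$) but is precisely the signature of a nondegenerate local \emph{minimum}; so "ruling out a local minimum of $F_x^{\delta}$ at $\lambda_{1}^{\delta}(x)$" by the same inequality proves nothing. A symmetric argument is not available here, and you would need a different mechanism for the nontrivial one-sided limit of $\lambda_{1}^{\delta}$ (for instance, exploiting that $F_{x'}^{\delta}(\lambda)>F_x^{\delta}(\lambda)$ for $x'>x$ together with the positivity of $F_x^{\delta}$ on $(\lambda_{1}^{\delta}(x),\tfrac{1}{2})$), rather than the local-extremum dichotomy you describe.
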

Before giving the proof, we first state some lemmas which are useful in the
proof. Let $x\in \left(  1,2\right)  $ be any fixed constant. For $t\in \left[
-1,1\right]  $, define%
\begin{align}
l\left(  t\right)   &  :=\left(  \ln \frac{t+x}{x-1}\ln \frac{t+x}{x+1}\right)
\left(  t+x\right)  ^{-2\lambda},\label{f1}\\
h\left(  t\right)   &  :=\left(  t^{2}-1\right)  \exp \left \{  -\frac{t^{2}}%
{2}\right \}  +\left[  \left(  t+2x\right)  ^{2}-1\right]  \exp \left \{
-\frac{\left(  t+2x\right)  ^{2}}{2}\right \}  ,\label{f2}\\
g\left(  t\right)   &  :=\left(  \ln \frac{t+3x}{x-1}\ln \frac{t+3x}%
{x+1}\right)  \left(  t+3x\right)  ^{-2\lambda},\label{f3}\\
f\left(  t\right)   &  :=\left[  \left(  t+2x\right)  ^{2}-1\right]
\exp \left \{  -\frac{\left(  t+2x\right)  ^{2}}{2}\right \}  .\label{f4}%
\end{align}
\begin{lemma}
\label{25}Let $l\left(  t\right)  $ be defined by (\ref{f1}). Then there
exists a $c_{l}\in \left(  -1,0\right)  $ such that $l\left(  t\right)  $ is
decreasing in $t\in \left(  -1,c_{l}\right)  $ and increasing in $t\in \left(
c_{l},1\right)  $. Moreover, $l\left(  t\right)  <0$ for $t\in \left(
-1,1\right)  $.
\end{lemma}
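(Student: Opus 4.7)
The plan is to set up logarithmic differentiation in a form that makes the monotonicity immediate. Write $A(t):=\ln\frac{t+x}{x-1}$ and $B(t):=\ln\frac{t+x}{x+1}$, so that $l(t)=A(t)B(t)(t+x)^{-2\lambda}$. For $t\in(-1,1)$ and $x\in(1,2)$, $t+x\in(x-1,x+1)$, so $A(t)>0$, $B(t)<0$, and $(t+x)^{-2\lambda}>0$, giving $l(t)<0$; moreover $A(-1)=B(1)=0$, so $l$ vanishes at the endpoints. Differentiating $\ln|l|=\ln A+\ln(-B)-2\lambda\ln(t+x)$ and using $A'(t)=B'(t)=\tfrac{1}{t+x}$ yields
\[
l'(t)=\frac{l(t)}{t+x}\,\bigl[N(t)-2\lambda\bigr],\qquad N(t):=\frac{1}{A(t)}+\frac{1}{B(t)}.
\]
Because $l(t)<0$ and $t+x>0$, the sign of $l'(t)$ is opposite to the sign of $N(t)-2\lambda$, so all the monotonicity information about $l$ is encoded by $N$.

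Next I would analyze $N$. A direct computation gives
\[
N'(t)=-\frac{1}{t+x}\Bigl[\frac{1}{A(t)^{2}}+\frac{1}{B(t)^{2}}\Bigr]<0,
\]
so $N$ is strictly decreasing on $(-1,1)$. At the endpoints, $A(t)\to 0^{+}$ as $t\to-1^{+}$ forces $N(t)\to+\infty$, and $B(t)\to 0^{-}$ as $t\to 1^{-}$ forces $N(t)\to-\infty$. Hence there is a unique $c_{l}\in(-1,1)$ with $N(c_{l})=2\lambda$, and $N(t)>2\lambda$ on $(-1,c_{l})$ while $N(t)<2\lambda$ on $(c_{l},1)$. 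Translating back through the formula for $l'$, we obtain $l'(t)<0$ on $(-1,c_{l})$ and $l'(t)>0$ on $(c_{l},1)$, which is exactly the claimed decreasing/increasing behavior.

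Finally, to pin down $c_{l}\in(-1,0)$, by the strict decrease of $N$ it suffices to verify $N(0)<2\lambda$, and in fact I would prove the stronger $\lambda$-free inequality $N(0)<0$. At $t=0$,
\[
A(0)=\ln\Bigl(1+\frac{1}{x-1}\Bigr),\qquad -B(0)=\ln\Bigl(1+\frac{1}{x}\Bigr),
\]
and since $\tfrac{1}{x-1}>\tfrac{1}{x}$ for $x>1$, we get $A(0)>-B(0)>0$, hence $\tfrac{1}{A(0)}<-\tfrac{1}{B(0)}$, i.e.\ $N(0)<0$. Therefore $N(0)<0<2\lambda$, so $c_{l}<0$ and $c_{l}\in(-1,0)$. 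The main substantive step is really the negativity of $N'$, which makes the critical point unique and lets every other part of the argument reduce to evaluating $N$ at one point; the rest is bookkeeping about signs of logarithms.
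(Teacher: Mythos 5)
Your proof is correct, and it takes a genuinely different route from the paper's. Writing $l=AB(t+x)^{-2\lambda}$ with $A(t)=\ln\frac{t+x}{x-1}>0$ and $B(t)=\ln\frac{t+x}{x+1}<0$, the paper expands $l'(t)=\left[L_{1}(t)+L_{2}(t)\right](t+x)^{-2\lambda-1}$ with $L_{1}=A+B$ and $L_{2}=-2\lambda AB$, then tracks the monotonicity of $L_{1}$ and $L_{2}$ separately, splitting the interval at $\sqrt{x^{2}-1}-x$ (the zero of $L_{1}$) and combining a "sum of minima" estimate on the right piece with the increase of $L_{1}+L_{2}$ on the left piece; this localizes $c_{l}\in\left(-1,\sqrt{x^{2}-1}-x\right)$. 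You instead factor out $l$ itself via logarithmic differentiation, reducing the whole sign analysis to the single function $N=\frac{1}{A}+\frac{1}{B}$, whose strict decrease is immediate from $A'=B'=\frac{1}{t+x}$ and whose blow-up to $+\infty$ at $t\to-1^{+}$ and $-\infty$ at $t\to1^{-}$ gives existence \emph{and} uniqueness of the critical point in one stroke; the location $c_{l}<0$ then follows from the single evaluation $N(0)<0$, which your comparison $A(0)>-B(0)>0$ handles cleanly. Your argument is shorter, avoids the case split at $\sqrt{x^{2}-1}-x$, and yields uniqueness of $c_{l}$ for free; the paper's version gives the marginally sharper localization $c_{l}<\sqrt{x^{2}-1}-x$, which is not needed for the lemma as stated. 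All the individual steps check out: the identity $l'=\frac{l}{t+x}\left[N-2\lambda\right]$ agrees with the paper's bracket $A+B-2\lambda AB$ after multiplying through by $AB$, the division by $l$ is legitimate since $l<0$ on the open interval, and the boundary limits of $N$ use only that $B(-1)$ and $A(1)$ are finite and nonzero.
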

\begin{proof}
It is obvious that $l\left(  t\right)  <0$ for $t\in \left(  -1,1\right)  $.

By simple calculation, we have%
\begin{equation}%
\begin{array}
[c]{rl}%
l^{\prime}\left(  t\right)  = & \left[  \ln \frac{\left(  t+x\right)  ^{2}%
}{x^{2}-1}-2\lambda \ln \frac{t+x}{x-1}\ln \frac{t+x}{x+1}\right]  \left(
t+x\right)  ^{-2\lambda-1}\\
= & \left[  L_{1}\left(  t\right)  +L_{2}\left(  t\right)  \right]  \left(
t+x\right)  ^{-2\lambda-1},
\end{array}
\label{61302}%
\end{equation}
where%
\[
L_{1}\left(  t\right)  :=\ln \frac{\left(  t+x\right)  ^{2}}{x^{2}-1}\text{ and
}L_{2}\left(  t\right)  :=-2\lambda \ln \frac{t+x}{x-1}\ln \frac{t+x}{x+1}.
\]

It is easy to check that $L_{1}\left(  \sqrt{x^{2}-1}-x\right)  =0$ and
$L_{1}^{\prime}\left(  t\right)  =2/\left(  t+x\right)  >0$ for $t\in \left(
-1,1\right)  $, which implies that $L_{1}\left(  t\right)  $ is increasing in
$t\in \left(  -1,1\right)  $, and
\[
\left \{
\begin{array}
[c]{ll}%
L_{1}\left(  t\right)  <0 & \text{ for }t\in \left(  -1,\sqrt{x^{2}%
-1}-x\right)  ,\\
L_{1}\left(  t\right)  >0 & \text{ for }t\in \left(  \sqrt{x^{2}-1}-x,1\right)
.
\end{array}
\right.
\]
For $L_{2}\left(  \cdot \right)  $, we have
\[
L_{2}^{\prime}\left(  t\right)  =-2\lambda \frac{1}{t+x}\ln \frac{\left(
t+x\right)  ^{2}}{x^{2}-1}=-2\lambda \frac{1}{t+x}L_{1}\left(  t\right)  .
\]
Thus $L_{2}^{\prime}\left(  \sqrt{x^{2}-1}-x\right)  =0$, and $L_{2}^{\prime
}\left(  t\right)  >0$ for $t\in \left(  -1,\sqrt{x^{2}-1}-x\right)  $,
$L_{2}^{\prime}\left(  t\right)  <0$ for $t\in \left(  \sqrt{x^{2}%
-1}-x,1\right)  $.

Therefore, $\left(  L_{1}+L_{2}\right)  \left(  t\right)  $
is increasing in $t\in \left(  -1,\sqrt{x^{2}-1}-x\right)  $. We can easily
check that $\left(  L_{1}+L_{2}\right)  \left(  -1\right)  <0$, and for
$t\in \left[  \sqrt{x^{2}-1}-x,1\right)  $,
\[
\left(  L_{1}+L_{2}\right)  \left(  t\right)  >\min_{t\in \left[  \sqrt
{x^{2}-1}-x,1\right)  }L_{1}\left(  t\right)  +\min_{t\in \left[  \sqrt
{x^{2}-1}-x,1\right)  }L_{2}\left(  t\right)  =0.
\]
Note that $\left(  t+x\right)  ^{-2\lambda-1}>0$ for $t\in \left(  -1,1\right)
$, then by (\ref{61302}), there exists a $c_{l}\in \left(  -1,\sqrt{x^{2}%
-1}-x\right)  \subseteq \left(  -1,0\right)  $ such that $l^{\prime}\left(
t\right)  <0$ for $t\in \left(  -1,c_{l}\right)  $ and $l^{\prime}\left(
t\right)  >0$ for $t\in \left(  c_{l},1\right)  $. \
\end{proof}
\begin{lemma}
\label{26}Let $h\left(  t\right)  $ be defined by (\ref{f2}). Then there
exists a $c_{h}\in \left(  0,1\right)  $ such that $h\left(  t\right)  $ is
decreasing in $t\in \left(  -1,c_{h}\right)  $ and increasing in $t\in \left(
c_{h},1\right)  $. Moreover, $h\left(  -1\right)  >0$, $h\left(  1\right)  >0$
and $h\left(  t\right)  <0$ for $t\in \left[  -0.65,0.7\right]  $.
\end{lemma}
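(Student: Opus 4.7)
The plan is to work with the decomposition $h(t)=\phi(t)+\phi(t+2x)$, where $\phi(s):=(s^2-1)e^{-s^2/2}$. The shape of $\phi$ on $[0,\infty)$ is standard: it vanishes at $s=1$, rises to its maximum $\phi(\sqrt 3)=2e^{-3/2}$, and then decays monotonically to $0$ at infinity; its derivative $\phi'(s)=s(3-s^2)e^{-s^2/2}$ is odd, positive on $(0,\sqrt 3)$, negative on $(\sqrt 3,\infty)$, and unimodal on $(0,\sqrt 3)$. For $x\in(1,2)$ and $t\in(-1,1)$ one has $t+2x\in(1,5)$, so $\phi(t+2x)\ge 0$ throughout.

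The boundary values $h(\pm 1)$ drop out by direct substitution into (\ref{f2}), since the first summand vanishes at $t=\pm 1$: $h(-1)=4x(x-1)e^{-(2x-1)^2/2}>0$ and $h(1)=4x(x+1)e^{-(2x+1)^2/2}>0$ for $x\in(1,2)$. For the monotonicity I would study $h'(t)=\phi'(t)+\phi'(t+2x)$ at the test points $t=-1,0,1$: one gets $h'(0)=2x(3-4x^2)e^{-2x^2}<0$ since $4x^2>3$; $h'(1)=2e^{-1/2}+\phi'(1+2x)>0$, because the small negative second term cannot overcome $2e^{-1/2}$; and $h'(-1)=-2e^{-1/2}+\phi'(2x-1)<0$, using that $\phi'$ is strictly decreasing on $(1,\sqrt 3)$ (so $\phi'(2x-1)<\phi'(1)=2e^{-1/2}$ for $x>1$) and $\phi'(2x-1)\le 0$ whenever $x\ge(\sqrt 3+1)/2$. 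These already give at least one zero of $h'$ in $(0,1)$.

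To upgrade this to uniqueness I would split on $x$. When $x\in[(\sqrt 3+1)/2,2)$, $t+2x>\sqrt 3$ throughout $(-1,1)$, so $\phi'(t+2x)\le 0$; this forces $h'\le 0$ on $(-1,0]$ immediately, and on $(0,1)$ an inspection of $h''(t)=(t^4-6t^2+3)e^{-t^2/2}+\bigl((t+2x)^4-6(t+2x)^2+3\bigr)e^{-(t+2x)^2/2}$ via the quartic $s^4-6s^2+3$ shows $h'$ is unimodal, yielding a single zero. When $x\in(1,(\sqrt 3+1)/2)$ there is a sub-interval $(-1,\sqrt 3-2x)\subset(-1,0)$ on which $\phi'(t+2x)>0$; on it I would exploit the odd symmetry $|\phi'(t)|=\phi'(-t)$ together with the identity $(-t)+(t+2x)=2x$ and the unimodality of $\phi'$ on $(0,\sqrt 3)$ to prove $|\phi'(t)|\ge\phi'(t+2x)$ pointwise, so that $h'\le 0$ persists on $(-1,0]$ in this regime as well. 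This pins down a unique minimum $c_h\in(0,1)$.

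For the sign claim on $[-0.65,0.7]$, the monotonicity just established implies $\max_{t\in[-0.65,0.7]}h(t)=\max\{h(-0.65),h(0.7)\}$, so it suffices to bound $h$ at the two endpoints over $x\in(1,2)$. At $t=-0.65$, the range of $t+2x$ is $(1.35,3.35)$, which contains $\sqrt 3$; hence $\phi(t+2x)\le 2e^{-3/2}$, and one verifies $(0.4225-1)e^{-0.21125}+2e^{-3/2}<0$, equivalent to the elementary inequality $e^{-1.28875}<0.28875$. At $t=0.7$, $t+2x\in(2.7,4.7)$ lies entirely above $\sqrt 3$, so $\phi$ is decreasing there and $\phi(t+2x)\le\phi(2.7)=6.29\,e^{-3.645}$; the corresponding bound $(0.49-1)e^{-0.245}+6.29\,e^{-3.645}<0$ holds with room to spare. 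The main obstacle will be the uniqueness of $c_h$ in $(0,1)$: because $h''$ itself changes sign on $(0,1)$, a direct convexity argument is unavailable, and the case split on $x$ together with the pointwise unimodality comparison for $\phi'$ seems unavoidable; a secondary subtlety is the tight numerical margin at $t=-0.65$, which requires a sharp (but elementary) estimate on $e^{-1.28875}$.
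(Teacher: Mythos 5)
Your decomposition $h(t)=\phi(t)+\phi(t+2x)$ with $\phi(s)=(s^2-1)e^{-s^2/2}$ is exactly the paper's device (its $\tilde h$ is your $\phi'$), and several pieces of your plan are correct and even sharper than the paper's: the endpoint values $h(\pm1)>0$, the signs $h'(-1)<0$, $h'(0)<0$, $h'(1)>0$, and the uniform-in-$x$ bounds $h(-0.65)\le -0.5775\,e^{-0.21125}+2e^{-3/2}<0$ and $h(0.7)\le -0.51\,e^{-0.245}+\phi(2.7)<0$, which the paper dismisses as ``easy to compute.''

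The genuine gap is the uniqueness of the sign change of $h'$ on $(0,1)$, which is the actual content of the lemma. In your Case A you assert that ``inspection of $h''$ via the quartic $s^4-6s^2+3$ shows $h'$ is unimodal,'' but $h''(t)=\phi''(t)+\phi''(t+2x)$ has summands of opposite sign for $t\in\bigl(\sqrt{3-\sqrt6},1\bigr)$ (there $\phi''(t)<0$ while $\phi''(t+2x)>0$ since $t+2x>\sqrt{3+\sqrt6}$), so a single sign change of $h''$ is not visible from the quartic alone; the paper has to go one derivative further, using $h'''(t)=t\bigl[10-(t^2-5)^2\bigr]e^{-t^2/2}+\cdots$ to show $h''$ is increasing on a suitable subinterval and then of constant sign, before it can conclude that $h'$ crosses zero once. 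Worse, in your Case B ($x<(\sqrt3+1)/2$) you only treat $(-1,0]$ via the symmetry comparison and then declare ``this pins down a unique minimum $c_h\in(0,1)$'' --- but nothing you have said controls the number of zeros of $h'$ on $(0,1)$ in that regime, and that is where $c_h$ lives. A secondary soft spot in Case B: the pointwise claim $\phi'(-t)\ge\phi'(t+2x)$ is clear when $-t\ge\sqrt{3-\sqrt6}$ (both arguments on the decreasing branch with $-t<t+2x$), but when $-t<\sqrt{3-\sqrt6}$ the two arguments sit on opposite branches of the unimodal $\phi'$ and ``unimodality'' alone does not order their values; a quantitative comparison (e.g.\ via $\phi'(t+2x)\to 0$ as $t+2x\to\sqrt3$) is needed. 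Until the single-crossing of $h'$ on $(0,1)$ is established in both cases, the monotonicity statement --- and hence the reduction of the sign claim on $[-0.65,0.7]$ to its endpoints --- is not proved.
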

\begin{proof}
Direct computation shows that%
\[%
\begin{array}
[c]{l}%
h^{\prime}\left(  t\right)  =\tilde{h}\left(  s\right)  +\tilde{h}\left(
t+2x\right)  ,\\
h^{\prime \prime}\left(  t\right)  =\left[  \left(  t^{2}-3\right)
^{2}-6\right]  \exp \left \{  -\frac{t^{2}}{2}\right \}  +\left \{  \left[
\left(  t+2x\right)  ^{2}-3\right]  ^{2}-6\right \}  \exp \left \{
-\frac{\left(  t+2x\right)  ^{2}}{2}\right \}  ,\\
h^{\prime \prime \prime}\left(  t\right)  =t\left[  10-\left(  t^{2}-5\right)
^{2}\right]  \exp \left \{  -\frac{t^{2}}{2}\right \}  +\left(  t+2x\right)
\left \{  10-\left[  \left(  t+2x\right)  ^{2}-5\right]  ^{2}\right \}
\exp \left \{  -\frac{\left(  t+2x\right)  ^{2}}{2}\right \}  .
\end{array}
\]
where $\tilde{h}\left(  s\right)  :=s\left(  3-s^{2}\right)  \exp \left \{
-\frac{s^{2}}{2}\right \}  $. We can easily check that%
\begin{equation}
\left \{
\begin{array}
[c]{l}%
\tilde{h}\left(  0\right)  =\tilde{h}\left(  \sqrt{3}\right)  =0,\\
\tilde{h}^{\prime}\left(  \pm \sqrt{3+\sqrt{6}}\right)  =\tilde{h}^{\prime
}\left(  \pm \sqrt{3-\sqrt{6}}\right)  =0,\\
\tilde{h}^{\prime}\left(  s\right)  >0\text{ \ for }s\in \left(  -\sqrt
{3-\sqrt{6}},\sqrt{3-\sqrt{6}}\right)  \cup \left(  \sqrt{3+\sqrt{6}},5\right)
,\\
\tilde{h}^{\prime}\left(  s\right)  <0\text{ \ for }s\in \left(  -1,-\sqrt
{3-\sqrt{6}}\right)  \cup \left(  \sqrt{3-\sqrt{6}},\sqrt{3+\sqrt{6}}\right)  .
\end{array}
\right.  \label{x}%
\end{equation}

\textbf{(i).} If $2x-\sqrt{3-\sqrt{6}}\geq \sqrt{3}$ and $x<2$, by (\ref{x}) we can
easily check that there exists a $c_{1}\in \left(  -1,1\right)  $ such that
$h^{\prime}\left(  t\right)  <0$ for $t\in \left(  -1,c_{1}\right)  $ and
$h^{\prime}\left(  t\right)  >0$ for $t\in \left(  c_{1},1\right)  $. Noting
that $h^{\prime}\left(  0\right)  <0$, we have $c_{1}>0$.

\textbf{(ii).} If $2x-\sqrt{3-\sqrt{6}}<\sqrt{3}$ and $x>1$, it is obvious that
$h^{\prime}\left(  t\right)  <0$ for $t\in \left(  -1,-\sqrt{3-\sqrt{6}%
}\right]  $. For $t\in \left(  -\sqrt{3-\sqrt{6}},\sqrt{3+\sqrt{6}}-2x\right]
$, we have $h^{\prime \prime \prime}\left(  t\right)  >0$, which implies that
$h^{\prime \prime}\left(  t\right)  $ is increasing on this interval. It is
easy to check that $h^{\prime \prime}\left(  -\sqrt{3-\sqrt{6}}\right)  <0$ and
$h^{\prime \prime}\left(  0\right)  >0$, so there exists a $c_{2}\in \left(
-\sqrt{3-\sqrt{6}},0\right]  $ such that $h^{\prime}\left(  t\right)  $ is
decreasing on $\left(  -\sqrt{3-\sqrt{6}},c_{2}\right)  $ and increasing on
$\left(  c_{2},\sqrt{3+\sqrt{6}}-2x\right)  $. Noting that $h^{\prime}\left(
-\sqrt{3-\sqrt{6}}\right)  <0$ and $h^{\prime}\left(  0\right)  <0$, which are
easy to check, we know that $h^{\prime}\left(  t\right)  <0$ for $t\in \left(
-\sqrt{3-\sqrt{6}},0\right]  $. From (\ref{x}) we have $h^{\prime \prime
}\left(  t\right)  >0$ for $t\in \left[  0,\sqrt{3+\sqrt{6}}-2x\right)  $, and
$h^{\prime}\left(  t\right)  $ is either increasing or decreasing on $\left(
\sqrt{3+\sqrt{6}}-2x,1\right)  $. Noting that $h^{\prime}\left(  0\right)  <0$
and $h^{\prime}\left(  1\right)  >0$, we can conclude that there exists a
$c_{3}\in \left(  0,1\right)  $ such that $h^{\prime}\left(  t\right)  <0$ for
$t\in \left(  -1,c_{3}\right)  $ and $h^{\prime}\left(  t\right)  >0$ for
$t\in \left(  c_{3},1\right)  $.

Moreover, it is easy to compute that $h\left(  -1\right)  >0$, $h\left(
1\right)  >0$, $h\left(  -0.65\right)  <0$ and $h\left(  0.7\right)  <0$,
which implies $h\left(  t\right)  <0$ for $t\in \left[  -0.65,0.7\right]  $.
\end{proof}
\begin{lemma}
\label{27}Let $g\left(  t\right)  $ be defined by (\ref{f3}). Then $g\left(
t\right)  $ is positive and increasing in $t\in \left(  -1,1\right)  $.
\end{lemma}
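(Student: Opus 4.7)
The plan is to tackle positivity and monotonicity separately, both by direct elementary estimates on $g(t)$ and $g'(t)$; no additional heavy machinery is needed, because the range $x\in(1,2)$ makes the arguments of the logarithms large enough to give everything the correct sign.

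For positivity, I would first observe that for $t\in(-1,1)$ and the fixed $x\in(1,2)$ we have $t+3x > 3x-1 > x+1 > x-1 > 0$; hence the two logarithmic factors in (\ref{f3}) are strictly positive, as is $(t+3x)^{-2\lambda}$. This gives $g(t)>0$ throughout $(-1,1)$.

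For monotonicity, I would compute $g'(t)$ directly. Writing $a:=\ln\frac{t+3x}{x-1}$ and $b:=\ln\frac{t+3x}{x+1}$, so that $a+b = \ln\frac{(t+3x)^{2}}{x^{2}-1}$ and both $a,b>0$ by the previous step, a routine differentiation of (\ref{f3}) yields
$$g'(t) = (t+3x)^{-2\lambda-1}\bigl[(a+b) - 2\lambda\, ab\bigr].$$
Since the prefactor is positive, it suffices to show the bracket is positive, i.e.\ $a+b > 2\lambda ab$. Using $2\lambda < 1$, I would reduce this to the stronger inequality $a+b \geq ab$, equivalently $b + a(1-b) > 0$.

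The only mild obstacle is then the bound $b<1$: this is equivalent to $t+3x < e(x+1)$, i.e.\ $x < (e-t)/(3-e)$. Since $t<1$ yields $(e-t)/(3-e) > (e-1)/(3-e) \approx 6.08$ while $x<2$, this inequality holds throughout $(t,x)\in(-1,1)\times(1,2)$. Once $b<1$ is available, $b+a(1-b)>0$ is immediate from $a,b>0$, so the bracket in the formula for $g'(t)$ is strictly positive and hence $g'(t)>0$, which completes the argument.
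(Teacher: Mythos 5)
Your proposal is correct and follows essentially the same route as the paper: the same positivity observation, the same formula $g'(t)=(t+3x)^{-2\lambda-1}\left[a+b-2\lambda ab\right]$, and the same key facts $a,b>0$, $b=\ln\frac{t+3x}{x+1}<1$, and $2\lambda<1$ (the paper merely rearranges the bracket as $b(2-b)+\ln\frac{x+1}{x-1}\,(1-b)$ instead of your $b+a(1-b)$, and asserts $b<1$ without the explicit numerical check you supply). No substantive difference.
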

\begin{proof}
It is obvious that $g\left(  t\right)  >0$ for $t\in \left(  -1,1\right)  $.

We have
\[
g^{\prime}\left(  t\right)  =\left(  \ln \frac{t+3x}{x-1}+\ln \frac{t+3x}%
{x+1}-2\lambda \ln \frac{t+3x}{x-1}\ln \frac{t+3x}{x+1}\right)  \left(
t+3x\right)  ^{-2\lambda-1}.
\]
It is clear that, for $t\in \left(  -1,1\right)  $,
\[
0<\ln \frac{t+3x}{x+1}<1\text{ and }\ln \frac{t+3x}{x-1}=\ln \frac{t+3x}{x+1}%
+\ln \frac{x+1}{x-1}>0\text{.}%
\]
Thus%
\[%
\begin{array}
[c]{rl}%
g^{\prime}\left(  t\right)  = & \left \{  2\ln \frac{t+3x}{x+1}+\ln \frac
{x+1}{x-1}-2\lambda \left[  \left(  \ln \frac{t+3x}{x+1}\right)  ^{2}+\ln
\frac{t+3x}{x+1}\ln \frac{x+1}{x-1}\right]  \right \}  \left(  t+3x\right)
^{-2\lambda-1}\\
> & \left \{  2\ln \frac{t+3x}{x+1}+\ln \frac{x+1}{x-1}-\left[  \left(  \ln
\frac{t+3x}{x+1}\right)  ^{2}+\ln \frac{t+3x}{x+1}\ln \frac{x+1}{x-1}\right]
\right \}  \left(  t+3x\right)  ^{-2\lambda-1}>0.
\end{array}
\]
\end{proof}
\begin{lemma}
\label{28}Let $x\in \left(  1,2\right)  $ and $\delta \in \left(  0,+\infty
\right)  $ be any fixed constants, and let $F_{x}^{\delta}\left(
\lambda \right)  $ be defined by (\ref{1902}) for $\lambda \in \left(  0,\frac
{1}{2}\right)  $. If there exists a $\mu_{\delta}\in \left(  0,\frac{1}%
{2}\right)  $ such that $F_{x}^{\delta}\left(  \mu_{\delta}\right)  =0$ and
$\left(  F_{x}^{\delta}\right)  ^{\prime}\left(  \mu_{\delta}\right)  =0$,
then $\left(  F_{x}^{\delta}\right)  ^{\prime \prime}\left(  \mu_{\delta
}\right)  >0$.
\end{lemma}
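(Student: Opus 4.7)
The plan is to adapt the quadratic--identity technique used in equation (\ref{61204}) to $F_x^\delta=(1+\delta)f_x+g_x$. Differentiating under the integral twice in $\lambda$ and collecting terms, one obtains
\[
\tfrac{1}{4}(F_x^\delta)''(\lambda)+\tfrac{1}{2}\ln(x^2-1)(F_x^\delta)'(\lambda)+\ln(x-1)\ln(x+1)F_x^\delta(\lambda)=\int_0^{\infty}\ln\tfrac{y}{x-1}\ln\tfrac{y}{x+1}\,y^{-2\lambda}K(y)\,dy,
\]
where $K(y)=(1+\delta)[(y-x)^2-1]e^{-(y-x)^2/2}+[(y+x)^2-1]e^{-(y+x)^2/2}$. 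At $\lambda=\mu_\delta$ the left-hand side collapses to $\tfrac{1}{4}(F_x^\delta)''(\mu_\delta)$ by the two standing hypotheses, so it suffices to prove that the right-hand integral is strictly positive.

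I would then substitute $t=y-x$, turning the right-hand side into $\int_{-x}^{\infty}l(t)\widetilde K(t)\,dt$, where $l$ is precisely the function of Lemma \ref{25} and $\widetilde K(t)=(1+\delta)(t^2-1)e^{-t^2/2}+[(t+2x)^2-1]e^{-(t+2x)^2/2}$. Decompose $\widetilde K(t)=\delta(t^2-1)e^{-t^2/2}+h(t)$ with $h$ the function of Lemma \ref{26}. The $\delta$--piece contributes $\delta\int_{-x}^{\infty}l(t)(t^2-1)e^{-t^2/2}\,dt$, which is strictly positive because $l(t)$ and $t^2-1$ share the same sign on $(-x,\infty)$ (both are negative on $(-1,1)$ and positive outside), and the inequality is strict since they do not vanish identically.

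It remains to show $\int_{-x}^{\infty}l(t)h(t)\,dt\ge 0$. Outside $(-1,1)$, Lemma \ref{25} gives $l>0$, and a direct computation using $x\in(1,2)$ shows that both summands in $h(t)$ are positive there (note $t+2x>1$ for $t\in(-x,-1)$), so the outer piece contributes positively. Inside $(-1,1)$ one has $l<0$; by Lemma \ref{26}, $h$ vanishes at two points $a_h\in(-1,-0.65)$ and $b_h\in(0.7,1)$ and satisfies $h<0$ on $(a_h,b_h)$ and $h>0$ on $(-1,a_h)\cup(b_h,1)$, so $lh>0$ on the middle interval but $lh<0$ on the two thin end-strips. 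The goal is to show the middle dominates: since $l(\pm 1)=0$ and $l$ is decreasing-then-increasing on $(-1,1)$ (Lemma \ref{25}), $|l|$ is relatively small on the thin strips $(-1,a_h)$ and $(b_h,1)$ compared with its values on $(a_h,b_h)$, while the explicit numerical bound $h<0$ on $[-0.65,0.7]$ from Lemma \ref{26} gives enough negative mass of $h$ in the middle.

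The main obstacle is precisely this quantitative comparison on $(-1,1)$. The most natural route is to establish a pointwise domination by means of an explicit change of variables $t\mapsto t+2x$, which relates the $[(t+2x)^2-1]e^{-(t+2x)^2/2}$ component of $h$ to the kernel whose positivity and monotonicity on $(-1,1)$ are furnished by Lemma \ref{27} (the function $g$ there). Combining the monotonicity of $g$ with the shape information on $l$ from Lemma \ref{25} and the numerical estimates on $h$ from Lemma \ref{26} should yield the required pointwise inequality. Once this is in hand, the positive contributions (the $\delta$-piece, the outside-of-$(-1,1)$ piece, and the middle interval $(a_h,b_h)$) overwhelm the two narrow negative strips, giving $(F_x^\delta)''(\mu_\delta)>0$ as desired.
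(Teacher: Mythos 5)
Your setup is exactly the paper's: you form the quadratic combination $\tfrac14(F_x^\delta)''+\tfrac12\ln(x^2-1)(F_x^\delta)'+\ln(x-1)\ln(x+1)F_x^\delta$, observe that at $\mu_\delta$ it collapses to $\tfrac14(F_x^\delta)''(\mu_\delta)$, strip off the strictly positive $\delta$-part (whose integrand $\ln\tfrac{y}{x-1}\ln\tfrac{y}{x+1}\,[(y-x)^2-1]$ is nonnegative because the two factors change sign at the same points $y=x\mp1$), and reduce to the positivity of the remaining integral, which after the substitutions $t=y-x$ and $t=y-3x$ is controlled by the functions $l,h,g,f$ of Lemmas \ref{25}--\ref{27}. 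All of that matches the paper's proof.

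However, the proof is not complete: the step you yourself label ``the main obstacle'' --- showing that the positive mass of $l\,h+g\,f$ on the middle of $(-1,1)$ and outside $(-1,1)$ dominates the negative contributions on the two strips $(-1,a_h)$ and $(b_h,1)$ --- is only asserted (``should yield the required pointwise inequality''), not proved. This is precisely where the real work lies, and it does not follow softly from the qualitative shape information in Lemmas \ref{25}--\ref{27}. In the paper this step requires a case split on $x\in(1,1.5)$ versus $x\in[1.5,2)$, a further splitting of $(-1,1)$ at the explicit points $0.4$ and $0.7$ as in (\ref{61303}), and a chain of explicit numerical lower bounds (\ref{1702}) in which, on the left strip, the term $g f$ is discarded entirely and the negative part of $\int_{-1}^{a_h}l\,h$ is beaten by the positive part of $\int_{a_h}^{0.4}l\,h$ via monotonicity and evaluations of $h$ at $-1$, $-0.65$, $\pm0.4$; a pointwise domination $l\,h+g\,f\ge0$ is not what is (or can easily be) established there. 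Only on the right strip $(0.7,1)$ does a pointwise argument work, and even that needs the quantitative input $g(0.7)>-l(0.7)$ together with the monotonicity of $g$ and $l$ to get $g/l<-1$, whence $l h+g f>l(h-f)=l(t)(t^2-1)e^{-t^2/2}>0$ as in (\ref{61304}). Until you supply these explicit estimates (or an alternative quantitative comparison), the conclusion $(F_x^\delta)''(\mu_\delta)>0$ is not established.
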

\begin{proof}
For $\lambda \in \left(  0,\frac{1}{2}\right)  $, set
\[%
\begin{array}
[c]{cl}%
\Gamma^{\delta}\left(  \lambda,x\right)  := & \left(  1+\delta \right)
\int_{0}^{+\infty}\left(  \ln \frac{y}{x-1}\ln \frac{y}{x+1}\right)
y^{-2\lambda}\left[  \left(  y-x\right)  ^{2}-1\right]  \exp \left \{
-\frac{\left(  y-x\right)  ^{2}}{2}\right \}  dy\\
& +\int_{0}^{+\infty}\left(  \ln \frac{y}{x-1}\ln \frac{y}{x+1}\right)
y^{-2\lambda}\left[  \left(  y+x\right)  ^{2}-1\right]  \exp \left \{
-\frac{\left(  y+x\right)  ^{2}}{2}\right \}  dy.
\end{array}
\]
Noting that%
\[
\Gamma^{\delta}\left(  \lambda,x\right)  =\frac{1}{4}\left(  F_{x}^{\delta
}\right)  ^{\prime \prime}\left(  \lambda \right)  +\frac{1}{2}\ln \left(
x^{2}-1\right)  \left(  F_{x}^{\delta}\right)  ^{\prime}\left(  \lambda
\right)  +\ln \left(  x-1\right)  \ln \left(  x+1\right)  F_{x}^{\delta}\left(
\lambda \right)  ,
\]
we only need to prove that $\Gamma^{\delta}\left(  \lambda,x\right)  >0$ for
every $\lambda \in \left(  0,\frac{1}{2}\right)  $ and \thinspace$x\in \left(
1,2\right)  $. Since
\[
\int_{0}^{+\infty}\left(  \ln \frac{y}{x-1}\ln \frac{y}{x+1}\right)
y^{-2\lambda}\left[  \left(  y-x\right)  ^{2}-1\right]  \exp \left \{
-\frac{\left(  y-x\right)  ^{2}}{2}\right \}  dy>0
\]
which is obvious, it suffices to prove that%
\[%
\begin{array}
[c]{cl}%
\Gamma \left(  \lambda,x\right)  := & \int_{0}^{+\infty}\left(  \ln \frac
{y}{x-1}\ln \frac{y}{x+1}\right)  y^{-2\lambda}\left[  \left(  y-x\right)
^{2}-1\right]  \exp \left \{  -\frac{\left(  y-x\right)  ^{2}}{2}\right \}  dy\\
& +\int_{0}^{+\infty}\left(  \ln \frac{y}{x-1}\ln \frac{y}{x+1}\right)
y^{-2\lambda}\left[  \left(  y+x\right)  ^{2}-1\right]  \exp \left \{
-\frac{\left(  y+x\right)  ^{2}}{2}\right \}  dy
\end{array}
\]
is positive for every $\lambda \in \left(  0,\frac{1}{2}\right)  $ and
\thinspace$x\in \left(  1,2\right)  $.

\textbf{(i).} Let $x\in \left(  1,1.5\right)  $ be any fixed constant. Now we show that
$\Gamma \left(  \lambda,x\right)  >0$ for every $\lambda \in \left(  0,\frac
{1}{2}\right)  $.

It is easy to check that%
\begin{equation}%
\begin{array}
[c]{rl}%
\Gamma \left(  \lambda,x\right)  > & \int_{x-1}^{x+1}\left(  \ln \frac{y}%
{x-1}\ln \frac{y}{x+1}\right)  y^{-2\lambda}\left[  \left(  y+x\right)
^{2}-1\right]  \exp \left \{  -\frac{\left(  y+x\right)  ^{2}}{2}\right \}  dy\\
& +\int_{x-1}^{x+1}\left(  \ln \frac{y}{x-1}\ln \frac{y}{x+1}\right)
y^{-2\lambda}\left[  \left(  y-x\right)  ^{2}-1\right]  \exp \left \{
-\frac{\left(  y-x\right)  ^{2}}{2}\right \}  dy\\
& +\int_{3x-1}^{3x+1}\left(  \ln \frac{y}{x-1}\ln \frac{y}{x+1}\right)
y^{-2\lambda}\left[  \left(  y-x\right)  ^{2}-1\right]  \exp \left \{
-\frac{\left(  y-x\right)  ^{2}}{2}\right \}  dy\\
= & \int_{-1}^{1}\left[  l\left(  t\right)  h\left(  t\right)  +g\left(
t\right)  f\left(  t\right)  \right]  dt,
\end{array}
\label{1701}%
\end{equation}
where we use integration by substitution in the last equation and $l\left(
t\right)  $, $h\left(  t\right)  $, $g\left(  t\right)  $, $f\left(  t\right)
$ are defined by (\ref{f1})-(\ref{f4}). So it suffices to prove
\[
\int_{-1}^{1}\left[  l\left(  t\right)  h\left(  t\right)  +g\left(  t\right)
f\left(  t\right)  \right]  dt>0.
\]

Noting that
\begin{equation}%
\begin{array}
[c]{ll}
& \int_{-1}^{1}\left[  l\left(  t\right)  h\left(  t\right)  +g\left(
t\right)  f\left(  t\right)  \right]  dt\\
= & \int_{-1}^{0.4}\left[  l\left(  t\right)  h\left(  t\right)  +g\left(
t\right)  f\left(  t\right)  \right]  dt+\int_{0.4}^{0.7}\left[  l\left(
t\right)  h\left(  t\right)  +g\left(  t\right)  f\left(  t\right)  \right]
dt\\
& +\int_{0.7}^{1}\left[  l\left(  t\right)  h\left(  t\right)  +g\left(
t\right)  f\left(  t\right)  \right]  dt,
\end{array}
\label{61303}%
\end{equation}
we consider the integrals respectively.
According to Lemma \ref{25}-Lemma \ref{27} and the fact that $f\left(  t\right)  >0$
for $t\in \left(  -1,1\right)  $, we have
\begin{equation}
\int_{0.4}^{0.7}\left[  l\left(  t\right)  h\left(  t\right)  +g\left(
t\right)  f\left(  t\right)  \right]  dt>0.
\end{equation}
For the last term of (\ref{61303}), we note that%
\[%
\begin{array}
[c]{c}%
l\left(  0.7\right)  +g\left(  0.7\right)  >\ln \frac{x+0.7}{x-1}\ln
\frac{x+0.7}{x+1}+\left(  \ln \frac{3x+0.7}{x-1}\ln \frac{3x+0.7}{x+1}\right)
\frac{1}{3x+0.7}>0
\end{array}
.
\]
Thus $g\left(  0.7\right)  >-l\left(  0.7\right)  >0$, that is, $g\left(
t\right)  /l\left(  t\right)  <-1$. It is clear that $h\left(  t\right)
<f\left(  t\right)  $ for $t\in \left(  0.7,1\right)  $. Since $g\left(
t\right)  $ and $l\left(  t\right)  $ are increasing in $t\in \left(
0.7,1\right)  $ and $l\left(  t\right)  <0$ for $t\in \left(  0.7,1\right)  $
according to Lemma \ref{25} and Lemma \ref{27}, we have%
\begin{equation}
\int_{0.7}^{1}\left[  l\left(  t\right)  h\left(  t\right)  +g\left(
t\right)  f\left(  t\right)  \right]  dt=\int_{0.7}^{1}l\left(  t\right)
\left[  h\left(  t\right)  +\frac{g\left(  t\right)  }{l\left(  t\right)
}f\left(  t\right)  \right]  dt>\int_{0.7}^{1}l\left(  t\right)  \left[
h\left(  t\right)  -f\left(  t\right)  \right]  dt>0. \label{61304}%
\end{equation}
For the first term of (\ref{61303}), since $g\left(  t\right)  >0$ and
$f\left(  t\right)  >0$ for $t\in \left(  -1,1\right)  $, we can check that%
\begin{equation}%
\begin{array}
[c]{ll}
& \int_{-1}^{0.4}\left[  l\left(  t\right)  h\left(  t\right)  +g\left(
t\right)  f\left(  t\right)  \right]  dt\\
\geq & \int_{-1}^{-0.65}l\left(  t\right)  h\left(  t\right)  dt+\int
_{-0.65}^{-0.4}l\left(  t\right)  h\left(  t\right)  dt+\int_{-0.4}%
^{1-x}l\left(  t\right)  h\left(  t\right)  dt+\int_{1-x}^{0}l\left(
t\right)  h\left(  t\right)  dt+\int_{0}^{0.4}l\left(  t\right)  h\left(
t\right)  dt\\
> & \left[  \left(  2x-1\right)  ^{2}-1\right]  \exp \left \{  -\frac{\left(
2x-1\right)  ^{2}}{2}\right \}  \int_{-1}^{-0.65}\ln \frac{x+t}{x-1}\ln
\frac{x+t}{x+1}\left(  x+t\right)  ^{-1}dt\\
& +\left \{  \left[  \left(  2x-0.4\right)  ^{2}-1\right]  \exp \left \{
-\frac{\left(  2x-0.4\right)  ^{2}}{2}\right \}  +\left(  0.4^{2}-1\right)
\exp \left \{  -\frac{0.4^{2}}{2}\right \}  \right \}  \int_{-0.4}^{1-x}\ln
\frac{x+t}{x-1}\ln \frac{x+t}{x+1}dt\\
& +\left \{  \left[  \left(  x+1\right)  ^{2}-1\right]  \exp \left \{
-\frac{\left(  x+1\right)  ^{2}}{2}\right \}  +\left[  \left(  x-1\right)
^{2}-1\right]  \exp \left \{  -\frac{\left(  x-1\right)  ^{2}}{2}\right \}
\right \}  \int_{1-x}^{0}\ln \frac{x+t}{x-1}\ln \frac{x+t}{x+1}\left(
x+t\right)  ^{-1}dt\\
& +\left \{  \left[  \left(  2x+0.4\right)  ^{2}-1\right]  \exp \left \{
-\frac{\left(  2x+0.4\right)  ^{2}}{2}\right \}  +\left(  0.4^{2}-1\right)
\exp \left \{  -\frac{0.4^{2}}{2}\right \}  \right \}  \int_{0}^{0.4}\ln \frac
{x+t}{x-1}\ln \frac{x+t}{x+1}\left(  x+t\right)  ^{-1}dt\\
> & 0.
\end{array}
\label{1702}%
\end{equation}
Then, by (\ref{61303})-(\ref{1702}), we conclude that $\int_{-1}^{1}\left[
l\left(  t\right)  h\left(  t\right)  +g\left(  t\right)  f\left(  t\right)
\right]  dt>0$, which implies $\Gamma \left(  \lambda,x\right)  >0$ for every
$\lambda \in \left(  0,\frac{1}{2}\right)  $ and \thinspace$x\in \left(
1,1.5\right)  $ by (\ref{1701}).

\textbf{(ii).} Let $x\in \left[  1.5,2\right)  $ be any fixed constant. We next prove
$\Gamma \left(  \lambda,x\right)  >0$ for every $\lambda \in \left(  0,\frac
{1}{2}\right)  $.

It is clear that
\[
\Gamma \left(  \lambda,x\right)  >\int_{-1.5}^{-1}\left(  \ln \frac{t+x}{x-1}%
\ln \frac{t+x}{x+1}\right)  \left(  t+x\right)  ^{-2\lambda}\left(
t^{2}-1\right)  \exp \left \{  -\frac{t^{2}}{2}\right \}  dt+\int_{-1}^{1}\left[
l\left(  t\right)  h\left(  t\right)  +g\left(  t\right)  f\left(  t\right)
\right]  dt\text{.}%
\]
Noting that (\ref{61304}) also holds for $x\in \left[  1.5,2\right)  $, and
$\int_{-0.5}^{0.7}\left[  l\left(  t\right)  h\left(  t\right)  +g\left(
t\right)  f\left(  t\right)  \right]  dt>0$ according to Lemma \ref{25}%
-Lemma \ref{27}, we have%
\[%
\begin{array}
[c]{rl}%
\Gamma \left(  \lambda,x\right)  > & \int_{-1.5}^{-1}\left(  \ln \frac{t+x}%
{x-1}\ln \frac{t+x}{x+1}\right)  \left(  t+x\right)  ^{-2\lambda}\left(
t^{2}-1\right)  \exp \left \{  -\frac{t^{2}}{2}\right \}  dt+\int_{-1}%
^{-0.5}\left[  l\left(  t\right)  h\left(  t\right)  +g\left(  t\right)
f\left(  t\right)  \right]  dt\\
> & \int_{x-1.5}^{x-1}\ln \frac{y}{x-1}\ln \frac{y}{x+1}y^{-2\lambda}\left[
\left(  y+x\right)  ^{2}-1\right]  \exp \left \{  -\frac{\left(  y+x\right)
^{2}}{2}\right \}  dy\\
& +\int_{x-1}^{x-0.5}\ln \frac{y}{x-1}\ln \frac{y}{x+1}y^{-2\lambda}\left[
\left(  y+x\right)  ^{2}-1\right]  \exp \left \{  -\frac{\left(  y+x\right)
^{2}}{2}\right \}  dy\\
= & \int_{0}^{0.5}\left(  \ln \frac{-t+x-1}{x-1}\ln \frac{-t+x-1}{x+1}\right)
\left(  -t+x-1\right)  ^{-2\lambda}\left[  \left(  -t+2x-1\right)
^{2}-1\right]  \exp \left \{  -\frac{\left(  -t+2x-1\right)  ^{2}}{2}\right \}
dt\\
& +\int_{0}^{0.5}\left(  \ln \frac{t+x-1}{x-1}\ln \frac{t+x-1}{x+1}\right)
\left(  t+x-1\right)  ^{-2\lambda}\left[  \left(  t+2x-1\right)
^{2}-1\right]  \exp \left \{  -\frac{\left(  -t+2x-1\right)  ^{2}}{2}\right \}
dt,
\end{array}
\]
by integration by substitution. For $t\in \left(  0,0.5\right)  $, we can
easily check that%
\[
-\ln \frac{-t+x-1}{x-1}>\ln \frac{t+x-1}{x-1}>0,
\]%
\[
\ln \frac{-t+x-1}{x+1}<\ln \frac{t+x-1}{x+1}<0,
\]%
\[
\left(  -t+x-1\right)  ^{-2\lambda}>\left(  t+x-1\right)  ^{-2\lambda},
\]
and%
\[
\left[  \left(  -t+2x-1\right)  ^{2}-1\right]  \exp \left \{  -\frac{\left(
-t+2x-1\right)  ^{2}}{2}\right \}  >\left[  \left(  t+2x-1\right)
^{2}-1\right]  \exp \left \{  -\frac{\left(  -t+2x-1\right)  ^{2}}{2}\right \}
.
\]
Then we have
\[%
\begin{array}
[c]{l}%
\int_{0}^{0.5}\left(  \ln \frac{-t+x-1}{x-1}\ln \frac{-t+x-1}{x+1}\right)
\left(  -t+x-1\right)  ^{-2\lambda}\left[  \left(  -t+2x-1\right)
^{2}-1\right]  \exp \left \{  -\frac{\left(  -t+2x-1\right)  ^{2}}{2}\right \}
dt\\
+\int_{0}^{0.5}\left(  \ln \frac{t+x-1}{x-1}\ln \frac{t+x-1}{x+1}\right)
\left(  t+x-1\right)  ^{-2\lambda}\left[  \left(  t+2x-1\right)
^{2}-1\right]  \exp \left \{  -\frac{\left(  -t+2x-1\right)  ^{2}}{2}\right \}
dt>0,
\end{array}
\]
which implies $\Gamma \left(  \lambda,x\right)  >0$.
\end{proof}

\begin{proof}
[\textnormal{\textbf{Proof of Proposition \ref{a1}}}]Let $x \in\left( 1,2 \right)$ be any fixed constant. By Lemma \ref{28}, for every
$\varepsilon>0$, there exists a $\lambda \in \left(  \lambda_{2}^{\delta}\left(
x\right)  ,\lambda_{2}^{\delta}\left(  x\right)  +\varepsilon \right)  $ such
that $F_{x}^{\delta}\left(  \lambda \right)  >0$. Then by Lemma \ref{2107}, there
exists $x^{\prime}\in \left(  1,x\right)  $ such that $F_{x^{\prime}}^{\delta
}\left(  \lambda \right)  =0$. It follows that $\lambda_{2}^{\delta}\left(
x^{\prime}\right)  \leq \lambda$ by the definition of $\lambda_{2}^{\delta
}\left( \cdot\right)  $. Since $\lambda_{2}^{\delta}\left(  x\right)  $ is
decreasing in $x\in \left(  1,+\infty \right)  $, which can be proved by the
same method as (ii) in the proof of Lemma \ref{61405}, we have $\lambda
_{2}^{\delta}\left(  x\right)  <\lambda_{2}^{\delta}\left(  x^{\prime}\right)
\leq \lambda$, which implies the left-continuity of $\lambda_{2}^{\delta
}\left(  x\right)  $ in $x\in \left(  1,2\right)  $. Thus we conclude that
$\lambda_{2}^{\delta}\left(  x\right)  $ is continuous in $x\in \left(
1,2\right)  $ since $\lambda_{2}^{\delta}\left(  x\right)  $ is obviously
right-continuous. The proof for the continuity of $\lambda_{1}^{\delta}\left(
x\right)  $ in $x\in \left(  1,2\right)  $ is similar.
\end{proof}
\end{appendix}

\end{document}